\newcommand{\setbuilder}[2]{\left\{#1\ \colon #2\right\}}
\DeclareMathOperator{\conv}{conv}
\newcommand{\R}{{\mathbb R}}
\newcommand{\N}{{\mathbb N}}
\newcommand{\Z}{{\mathbb Z}}
\newcommand{\Q}{{\mathbb Q}}
\renewcommand{\L}{{\mathcal L}}
\newlength{\NOTskip}
\theoremstyle{plain}
\newtheorem{theorem}{Theorem}
\newtheorem{claim}[theorem]{Claim}
\newtheorem{problem}[theorem]{Problem}
\newtheorem{corollary}[theorem]{Corollary}
\newtheorem{proposition}[theorem]{Proposition}
\newtheorem{lemma}[theorem]{Lemma}
\newtheorem{prop}[theorem]{Proposition}
\newtheorem{conjecture}[theorem]{Conjecture}
\newtheorem{question}[theorem]{Question}
\theoremstyle{definition}
\newtheorem{remark}{Remark}
\newtheorem*{counterexample}{Counterexample}
\date{}
\author{N\'ora Frankl\footnote{Carnegie Mellon University, Pittsburgh, and Geometric Structures at MIPT, Moscow. Research was part of project $\textrm{2018-2.1.1-UK}\_\textrm{GYAK-2018-00024}$ of the summer internship for Hungarian students studying in the UK, supported by the National Research, Development and Innovation Office. NF also acknowledges the financial support from the Ministry of Education and Science of the Russian Federation in the framework of MegaGrant no 075-15-2019-1926. Research was also partially supported by the National Research, Development, and Innovation Office, NKFIH Grant K119670 and by ERC Advanced Grant "GeoScape."} \and Tam\'as Hubai\footnote{MTA-ELTE Lend\"ulet Combinatorial Geometry Research Group, Institute of Mathematics, E\"otv\"os Lor\'and University (ELTE), Budapest, Hungary. Research supported by the Lend\"ulet program of the Hungarian Academy of Sciences (MTA), under grant number LP2017-19/2017.}\and  D\"om\"ot\"or P\'alv\"olgyi$^\dagger$}
\title{Almost-monochromatic sets
and the chromatic number of the plane}
\begin{document}
\maketitle

\begin{abstract}
    In a colouring of $\mathbb{R}^d$ a pair $(S,s_0)$ with $S\subseteq \mathbb{R}^d$ and with $s_0\in S$ is \emph{almost-monochromatic} if $S\setminus \{s_0\}$ is monochromatic but $S$ is not. We consider questions about finding almost-monochromatic similar copies of pairs $(S,s_0)$ in colourings of $\mathbb{R}^d$, $\mathbb{Z}^d$, and of $\mathbb{Q}$ under some restrictions on the colouring.
    
    Among other results, we characterise those $(S,s_0)$ with $S\subseteq \mathbb{Z}$ for which every finite colouring of $\mathbb{R}$ without an infinite monochromatic arithmetic progression contains an almost-monochromatic similar copy of $(S,s_0)$. We also show that if $S\subseteq \mathbb{Z}^d$ and $s_0$ is outside of the convex hull of $S\setminus \{s_0\}$, then every finite colouring of $\mathbb{R}^d$ without a monochromatic similar copy of $\mathbb{Z}^d$ contains an almost-monochromatic similar copy of $(S,s_0)$.
    Further, we propose an approach based on finding almost-monochromatic sets that might lead to a human-verifiable proof of $\chi(\R^2)\geq 5$.
\end{abstract}

\section{Introduction}

A colouring $\varphi: \R^2\to \{1,\dots,k\}$ is a (unit-distance-avoiding) \emph{proper $k$-colouring} of the plane, if $\|p-q\|=1$ implies $\varphi(p)\ne \varphi(q)$, where $\|.\|$ denotes the Euclidean norm. The \emph{chromatic number $\chi(\mathbb{R}^2)$ of the plane} is the smallest $k$ for which there exists a proper $k$-colouring of the plane. Determining the exact value of $\chi(\mathbb{R}^2)$, also known as the Hadwiger-Nelson problem, is a difficult problem. 
In $2018$ Aubrey de Grey~\cite{degrey} showed that $\chi(\mathbb{R}^2)\geq 5$, improving the long standing previous lower bound $\chi(\mathbb{R}^2)\geq 4$ which was first noted by Nelson (see \cite{soiferbook}). The best known upper bound $\chi(\mathbb{R}^2)\leq 7$ was first observed by Isbell (see \cite{soiferbook}), and it is widely conjectured that $\chi(\mathbb{R}^2)=7$.
For history and related results we refer the reader to Soifer's book \cite{soiferbook}.

A graph $G=(V,E)$ is a \emph{unit-distance graph} in the plane if $V\subseteq \mathbb{R}^2$ such that if $(v,w)\in E$ then $\|v-w\|=1$. De Grey constructed a unit-distance graph $G$ with $1581$ vertices, and checked that $\chi(G)\geq 5$ by a computer program. Following his breakthrough, a polymath project, Polymath$16$ \cite{polymath} was launched with the main goal of finding a human-verifiable proof of $\chi(\mathbb{R}^2)\geq 5$. Following ideas proposed in Polymath$16$ by the third author \cite{domotor}, we present an approach that might lead to a human-verifiable proof of $\chi(\mathbb{R}^2)\geq 5$.

We call a collection of unit circles $C=C_1\cup\dots\cup C_n$ having a common point $O$ a \emph{bouquet through $O$}. For a given colouring of $\mathbb{R}^2$, the bouquet $C$ is \emph{bold} if there is a colour, say blue, such that every circle $C_i$ has a blue point, but $O$ is not blue.

\begin{conjecture}\label{introbouquet} For every bouquet $C$, every colouring of the plane with finitely many but at least two colours contains a bold congruent copy of $C$.
\end{conjecture}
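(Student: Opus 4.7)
My plan is to prove Conjecture \ref{introbouquet} in two stages: first dispatching the single-circle case by hand, then reducing the general bouquet to an almost-monochromatic problem on a finite pointed set derived from the bouquet.

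For $n = 1$, given a finite colouring with at least two colours, I would fix any colour that is used (call it blue) and produce a non-blue point $O'$ within Euclidean distance at most $2$ of some blue point $q$. Such a pair exists because on any straight-line segment between a blue point and a non-blue point, partitioning it into many small subintervals, one finds two adjacent sample points of different colours at distance at most $2$; choose one blue and one non-blue among them. Then any unit circle through both $O'$ and $q$ (there is at least one, since $|O'-q|\le 2$) is a smiling congruent copy of $C$, with $O'$ playing the role of the common point.

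For general $n$, the key step is a reduction to a finite-configuration problem. Pick one representative $p_i \in C_i$ on each circle (for concreteness, the antipode $p_i = 2c_i$ of $O$, where $c_i$ is the centre of $C_i$). A \emph{congruent} almost-monochromatic copy of the pointed finite set $(\{O, p_1, \dots, p_n\}, O)$ — one in which the image of $O$ is not blue and the images of all $p_i$'s are blue — automatically yields a smiling congruent copy of the bouquet. The conjecture therefore reduces to producing an almost-monochromatic congruent copy of $(\{O, p_1, \dots, p_n\}, O)$ in every finite colouring of $\R^2$ with at least two colours, with the added freedom to vary each representative $p_i$ along $C_i$.

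I would attempt this reduced problem by a case split on the colouring. When the colouring contains a monochromatic similar copy of $\Z^2$, the density of the monochromatic lattice permits direct placement of the $p_i$'s on monochromatic lattice points while the image of $O$ avoids them, yielding a smiling copy. Otherwise, I would invoke (an analogue of) the theorem announced in the abstract, which provides almost-monochromatic similar copies under the hypothesis $s_0 \notin \conv(S \setminus \{s_0\})$. The main obstacle will be twofold: that theorem produces \emph{similar} rather than congruent copies, so the scale factor would need to be controlled, perhaps by a compactness or limiting argument over a continuous family of bouquets; and the convex-hull condition fails precisely when the circles of the bouquet surround $O$. The genuinely hardest case is a bouquet whose circles are spread over a full $360^\circ$ around $O$, where neither the convex-hull hypothesis nor a simple rescaling argument applies; a substantially new idea will likely be needed there, perhaps a decomposition of the bouquet into angularly narrow sub-bouquets together with a combinatorial gluing of smiling copies across them.
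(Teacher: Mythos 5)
Conjecture~\ref{introbouquet} is stated in the paper as an \emph{open} conjecture, not a theorem: the paper establishes only restricted versions (Theorems~\ref{introracbouq} and~\ref{latticeboq} for lattice-like bouquets under \emph{proper} colourings, and Theorem~\ref{intropencil} for pencils of lines). So there is no complete proof of the paper's own to compare against, and your proposal---which you candidly present as a plan with open ends rather than a finished proof---should be read in that light. Your $n=1$ argument is correct, and your reduction of the smiling-bouquet problem to finding an almost-monochromatic congruent copy of the pointed set $(\{O,p_1,\dots,p_n\},O)$ via the antipodal representatives $p_i=2c_i$, keeping the freedom to rotate all the $p_i$'s simultaneously around $O$, is exactly the mechanism the paper uses in Proposition~\ref{similar} inside the proof of Theorem~\ref{latticeboq}.

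The two obstacles you flag are, however, precisely where the general conjecture remains open, and your plan does not supply the missing ideas. The paper sidesteps the similar-versus-congruent scale mismatch by requiring $\{O,O_1,\dots,O_n\}$ to lie in a rotatable lattice, so that the scaling factor produced by the lattice version of Theorem~\ref{grid} is a bounded integer, cancellable by a rational choice of $\delta$; and it sidesteps the convex-hull problem by making $O$ an extreme point of $\{O,O_1,\dots,O_n\}$ part of the definition of a lattice-like bouquet. Beyond that, your first case is flawed as written: a monochromatic similar copy of $\Z^2$, however fine its mesh, does \emph{not} ``permit direct placement of the $p_i$'s on monochromatic lattice points.'' A unit circle in general position contains no points of a fixed countable lattice, no matter how dense, so forcing all $n$ circles to hit the lattice simultaneously needs a genuine argument, not a density appeal. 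The paper avoids this issue entirely because Theorem~\ref{latticeboq} treats only \emph{proper} colourings, where a monochromatic lattice at the chosen scale immediately yields two points at unit distance and hence a contradiction. That escape route is unavailable for the arbitrary finite colourings of Conjecture~\ref{introbouquet}, which is one concrete reason the general statement resists the paper's techniques.
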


In Section~\ref{4} we show that the statement of Conjecture~\ref{introbouquet} would provide a human-verifiable proof of $\chi(\mathbb{R}^2)\geq 5$. 
We prove the conjecture for a specific family of bouquets with proper colourings of $\mathbb{R}^2$.

\begin{theorem}\label{introracbouq}
Let $C=C_1\cup\dots\cup C_n$ be a bouquet through $O$ and for every $i$ let $O_i$ be the centre of $C_i$. If $O$ and $O_1,\dots,O_n$ are contained in $\mathbb{Q}^2$, further $O$ is an extreme point of $\{O,O_1,\dots,O_n\}$, then Conjecture~\ref{introbouquet} is true for $C$ for every proper colouring of $\mathbb{R}^2$.
\end{theorem}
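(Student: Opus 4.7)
I would proceed by contradiction. For a point $p\in\R^2$ let $S(p)\subseteq [k]$ denote the set of colours that appear on the unit circle around $p$. Properness gives $\varphi(p)\notin S(p)$, while for any rigid motion $\sigma$ the centre $\sigma(O)$ lies on every circle $\sigma(C_i)$, so $\varphi(\sigma(O))\in \bigcap_i S(\sigma(O_i))$. A smiling congruent copy $\sigma(C)$ therefore exists iff this intersection strictly contains $\{\varphi(\sigma(O))\}$, and the assumption of no smiling copy is equivalent to
$$\bigcap_{i=1}^{n} S(\sigma(O_i))=\{\varphi(\sigma(O))\} \quad\text{for every rigid motion }\sigma. \qquad (*)$$

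As a warm-up, if $\varphi$ uses only two colours, then $(*)$ forces $S(\sigma(O_i))=\{\varphi(\sigma(O))\}$ for some $i$, so the circle $\sigma(C_i)$ is monochromatic; but any unit circle contains two points at angular separation $\pi/3$, hence at distance $1$, contradicting properness. This case needs no extreme hypothesis. For $\geq 3$ colours the condition $(*)$ only forces each circle to miss \emph{some} colour, and the extreme hypothesis becomes essential to promote these local misses into a global contradiction.

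I would then restrict $\sigma$ to the subgroup generated by rational translations and rational rotations (those with $\sin,\cos\in\Q$, a dense subgroup of $SO(2)$). Since $O$ and each $O_i$ is in $\Q^2$, such $\sigma$ preserves $\Q^2$, so $(*)$ becomes a constraint purely on $\varphi|_{\Q^2}$. The extreme hypothesis yields a rational direction $u$ with $\langle u, O-O_i\rangle>0$ for every $i$; equivalently, $O_1,\dots,O_n$ cluster on an open arc of angular length strictly less than $\pi$ on the unit circle around $O$.

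The target is to combine $(*)$ across many rational $\sigma$ and conclude that $\varphi|_{\Q^2}$ is invariant under translation by some rational unit vector $w$ (for instance $w=(3/5,4/5)$), which contradicts properness at the pair $(p,p+w)$. A natural auxiliary tool is the second intersection $O_{ij}$ of pairs of circles $C_i\cap C_j=\{O,O_{ij}\}$: these are automatically rational, since two rationally-centred unit circles meet in a rationally-conjugate pair, and $\varphi(O_{ij})\in S(O_i)\cap S(O_j)$ feeds into the pairwise intersections. The angular clustering supplied by the extreme hypothesis is what should allow rational rotations of the bouquet around $O$ to be composed coherently, so that pairwise constraints at various orientations combine into the full intersection of $(*)$ and eventually into translation invariance by a unit vector. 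I expect this propagation step --- extracting a rational unit period from the overlapping single-colour constraints --- to be the main obstacle; in particular, verifying that the chain of rational rotations one is forced to take terminates in a usable vector of length exactly $1$, rather than of some other rational length, is the delicate point.
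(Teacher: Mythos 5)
Your reformulation $(*)$ is correct (since $\sigma(O)$ lies on every $\sigma(C_i)$, the intersection always contains $\varphi(\sigma(O))$, and a smiling copy exists iff the containment is strict), and the two-colour warm-up is a valid argument (although it is vacuous here, since $\chi(\R^2)\geq 4$ means no proper two-colouring exists). The genuine problem is the step you yourself flag as ``the main obstacle'': you never actually derive a contradiction from $(*)$ for $k\geq 3$. Going from ``every circle in every congruent copy misses some colour'' to ``$\varphi|_{\Q^2}$ is invariant under a rational unit translation'' is not a routine propagation; it is the entire content of the theorem. The tools you propose do not obviously suffice: $\varphi(O_{ij})\in S(O_i)\cap S(O_j)$ only controls a pairwise intersection, while $(*)$ is an $n$-wise statement, and without some form of Ramsey/Gallai input there is no mechanism to force the required coherence across orientations. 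As written, the proposal is a plausible-sounding setup plus an unproven claim, not a proof.

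The paper's route is quite different and does supply the missing engine. It reduces Theorem~\ref{introracbouq} to Theorem~\ref{latticeboq} (the centres lie in $\frac{1}{N}\Z^2$, a rotatable lattice), and Theorem~\ref{latticeboq} is driven by Theorem~\ref{lattice}, a Gallai-type dichotomy proved via Hales--Jewett together with a local extension lemma (Lemma~\ref{lemextension}) that exploits exactly the extremeness of $O$: since $O$ lies outside $\conv\{O_1,\dots,O_n\}$, one can place a small similar copy of the centre set with $s_0'$ just outside a large monochromatic ball and the rest inside it. Iterating this either produces an AM similar copy with a bounded scaling ratio (which Proposition~\ref{similar} lifts to a smiling bouquet, since a scaled copy of the centre set with factor at most $2$ fits on the circles) or forces an entire rescaled lattice to be monochromatic. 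Choosing the rescaling $\delta=1/\lambda(\L,S,k)!$ ensures that this lattice contains two points at distance exactly $1$, contradicting properness. Note that the monochromatic-lattice branch plays the role you wanted your ``rational unit translation'' to play, but the paper obtains it from Gallai plus a local geometric extension rather than directly from the constraint $(*)$; you would need to discover something comparable to make your approach go through.
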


In Section \ref{42} we prove a more general statement which implies Theorem \ref{introracbouq}. We also prove a statement similar to that of Conjecture~\ref{introbouquet} for concurrent lines. We call a collection of lines $L=L_1\cup\dots\cup L_n$ with a common point $O$ a \emph{pencil through $O$}. The pencil $L$ is \emph{bold} if there is a colour, say blue, such that every line $L_i$ has a blue point, but $O$ is not blue.

\begin{theorem}\label{intropencil} For every pencil $L$, every colouring of the plane with finitely many but at least two colours contains a bold congruent copy of $L$.
\end{theorem}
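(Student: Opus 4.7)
The plan is to find a point $p \in \R^2$ and a rotation $\alpha$ such that the congruent copy of $L$ centred at $p$ and rotated by $\alpha$ is smiling: there is a colour $c \ne \varphi(p)$ such that every one of the $n$ rotated lines through $p$ contains a $c$-point. First I reduce (at least partially) to the two-colour case by merging: for a fixed target colour $c$, group all other colour classes into $\R^2 \setminus C_c$, obtaining a two-colour partition $(C_c, \R^2 \setminus C_c)$. A smiling copy for this $2$-colouring whose "blue" class is $C_c$ is exactly a $c$-smiling copy for the original $\varphi$.

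Write $\theta_1,\dots,\theta_n \in [0,\pi)$ for the directions of the lines of $L$, and for $p \in \R^2$ let $D_p^c = \{\theta \in [0,\pi) : \ell_{p,\theta}\cap C_c = \emptyset\}$, where $\ell_{p,\theta}$ is the line through $p$ in direction $\theta$. A $c$-smiling copy centred at $p$ exists iff $\bigcup_{i=1}^n(D_p^c - \theta_i) \ne [0,\pi) \pmod{\pi}$, so it suffices to arrange $D_p^c$ to be finite: then only finitely many rotations are forbidden while $[0,\pi)$ is uncountable.

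I then split into two cases. In Case A, $\R^2 \setminus C_c$ contains no entire line, equivalently every line of $\R^2$ contains a $c$-point: then $D_p^c = \emptyset$ for every $p \in \R^2 \setminus C_c$ (such a $p$ exists since $\varphi$ uses at least two colours), and any rotation at $p$ is a $c$-smiling copy. In Case B, both $C_c$ and $\R^2\setminus C_c$ contain entire lines; here the key geometric observation is that a line $\ell \subseteq C_c$ and a line $\ell' \subseteq \R^2\setminus C_c$ cannot meet (their intersection would have two conflicting merged colours), so any such pair must be parallel, and if $C_c$ had entire lines in two distinct directions then no line of $\R^2\setminus C_c$ could be parallel to both, contradicting the Case B hypothesis. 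Thus all entire "mono" lines in $C_c$ and $\R^2\setminus C_c$ share a common direction $d$, giving $D_p^c \subseteq \{d\}$ for every $p$, and a smiling copy is obtained by avoiding the $n$ forbidden rotations $d - \theta_i \pmod{\pi}$.

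The main obstacle is the residual case in which neither Case A nor Case B applies for any choice of $c$: each $C_c$ is line-free, yet $\R^2 \setminus C_c$ still contains an entire line (which can happen only for $k \ge 3$, via a line crossing several non-$c$ classes). In this situation every pencil line through any point already contains a non-$c$ point, but different lines may carry different non-$c$ colours, so the rotation argument does not immediately locate a \emph{common} non-$\varphi(p)$ colour. Closing this case requires a more refined combinatorial step: for each rotation $\alpha$ the $n$ pencil lines at $p$ contribute $n$ non-empty subsets of $\{c' : c' \ne \varphi(p)\}$, and one must show some $\alpha$ can be chosen so that these subsets share a common element, by a pigeonhole or averaging argument that exploits the finiteness of the colour set together with the freedom to vary both $p$ and $\alpha$.
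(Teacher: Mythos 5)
Your reduction to Cases A and B is sound as far as it goes, but as you acknowledge, the residual case --- where for every colour $c$ the class $C_c$ contains no entire line yet $\R^2\setminus C_c$ does --- is left open, and this is a genuine gap rather than a loose end. In that regime the set $D_p^c$ of directions through $p$ that miss colour $c$ need not be finite (for instance, if $C_c$ lies in a bounded region, then for $p$ far away a whole arc of directions avoids $C_c$), so the ``finitely many forbidden rotations'' count no longer applies, and no single target colour is guaranteed to appear on all $n$ rotated lines at once. The ``pigeonhole or averaging argument'' you gesture at is not supplied, and it is not clear one exists at this level of generality: the set of colours visible on each rotated line can change discontinuously in both $\alpha$ and $p$, and nothing in the purely combinatorial picture forces the $n$ lines to share a common non-$\varphi(p)$ colour. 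You would either have to show that the residual case cannot occur for a finite colouring of the plane (a nontrivial structural statement you do not prove) or supply the promised argument; neither is present, so the proof is incomplete.

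The paper closes this exact difficulty by a Ramsey-theoretic detour rather than by a case analysis on the colour classes. Using Gallai's theorem it produces a monochromatic similar copy of a fixed $n$-point configuration inscribed in a circle, chosen so that the inscribed-angle conditions reproduce the angles of $L$; this forces the circumscribed circle itself to be monochromatic (a differently coloured point on it, joined to the $n$ inscribed points, would give a smiling pencil), and then a growing-disc argument --- translating the pencil inward and using a quantitative lemma to push slightly outward --- upgrades this to the entire plane being one colour, contradicting the hypothesis of at least two colours. If you want to keep an argument in the spirit of yours, what the residual case is missing is precisely some substitute for Gallai's theorem: a way to manufacture a large monochromatic structure on which to anchor the pencil, rather than hoping a single colour class happens to be line-rich.
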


\subsection{Almost-monochromatic sets}

Let $S\subseteq \mathbb{R}^d$ be a finite set with $|S|\geq 3$, and let $s_0\in S$. In a colouring of $\mathbb{R}^d$ we call $S$ \emph{monochromatic}, if every point of $S$ has the same colour. A pair $(S,s_0)$ is \emph{almost-monochromatic} if $S\setminus \{s_0\}$ is monochromatic but $S$ is not. 
From now on we will use the abbreviation AM for almost-monochromatic.

We call a colouring a \emph{finite colouring}, if it uses finitely many colours. An \emph{infinite arithmetic progression} in $\mathbb{R}^d$ is a similar copy of $\mathbb{N}$.
From now on we will use the abbreviation AP for infinite arithmetic progression.
A colouring is \emph{AP-free} if it does not contain a monochromatic infinite arithmetic progression.

Motivated by its connections to the chromatic number of the plane,\footnote{The connection is described in details later; see Theorems~\ref{human}, \ref{latticeboq} and \ref{implication}.} we propose to study the following problem.

\begin{problem}\label{problem1} Characterise those pairs $(S,s_0)$ with $S\subseteq \mathbb{R}^d$ and with $s_0\in S$ for which it is true that every AP-free finite colouring of $\mathbb{R}^d$ contains an AM similar copy of $(S,s_0)$.
\end{problem}

Note that finding an AM \emph{congruent} copy of a given pair $(S,s_0)$ was studied by Erd\H os, Graham, Montgomery, Rothschild, Spencer, and Strauss \cite{EGMRSS3}. We solve Problem~\ref{problem1} in the case when $S\subseteq \mathbb{Z}^d$. A point $s_0\in S$ is called an \emph{extreme point} of $S$ if $s_0\notin \conv (S\setminus \{s_0\})$. 

\begin{theorem}\label{introZ}
Let $S\subseteq \mathbb{Z}^d$ and $s_0\in S$. Then there is an AP-free colouring of $\mathbb{R}^d$ without an AM similar copy of $(S,s_0)$ if and only if $|S|>3$ and $s_0$ is not an extreme point of $S$.
\end{theorem}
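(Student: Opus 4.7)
The plan is to prove each direction of the biconditional separately. Write the statement as $(\Rightarrow)$: existence of an AP-free colouring without an AM copy implies $|S|>3$ and $s_0$ is not extreme, and $(\Leftarrow)$: the converse. I will prove the contrapositive of $(\Rightarrow)$ and construct the colouring for $(\Leftarrow)$.

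Contrapositive of $(\Rightarrow)$: if $|S|=3$ or $s_0$ is extreme, then every AP-free finite colouring of $\mathbb{R}^d$ contains an AM similar copy of $(S,s_0)$. First suppose $s_0$ is extreme in $S$. Assume for contradiction that some AP-free finite colouring witnesses no AM copy, so for every similarity $\phi$ with $\phi(S\setminus\{s_0\})$ monochromatic in a colour $C$ we must have $\phi(s_0)\in C$. Choose $s_1\in S\setminus\{s_0\}$ lying on an edge of $\mathrm{conv}(S)$ incident to $s_0$ and set $v=s_0-s_1$. Apply the Gallai--Witt theorem to the enlarged set $T_N=\bigcup_{k=0}^N(S+kv)$ to obtain a monochromatic similar copy for each $N$, and pigeonhole on colours to fix a single $C$ supporting arbitrarily long monochromatic arithmetic progressions $\phi_N(s_0)+k\phi_N(v)$ inside $C$. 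Combining this with the no-AM closure rule as an extension step (mono copies of $S$ in $C$ force their $s_0$-images in $C$), promote the finite mono APs to an infinite monochromatic AP in $C$, contradicting AP-freeness. In the remaining subcase $|S|=3$ with $s_0$ in the middle of a collinear triple, we have $s_0=(s_1+s_2)/2$, so absence of AM copies forces every colour class to be midpoint-closed; a colour class containing two points $x,y$ therefore contains every dyadic convex combination of $x$ and $y$, and iterating with further elements of the class (at least one is unbounded as there are only finitely many) produces an infinite monochromatic AP, again contradicting AP-freeness.

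For $(\Leftarrow)$, I construct the colouring. Because $s_0\in\mathrm{conv}(S\setminus\{s_0\})$ with integer data, fix a rational convex representation $s_0=\sum_{i\geq 1}\lambda_i s_i$ with $\lambda_i\in\mathbb{Q}_{\geq 0}$ and $\sum\lambda_i=1$. Every similarity satisfies $\phi(s_0)=\sum\lambda_i\phi(s_i)$, so it suffices to produce an AP-free finite colouring whose colour classes are closed under this one rational convex combination. I plan to fix a Hamel basis of $\mathbb{R}^d$ over $\mathbb{Q}$, choose a $\mathbb{Q}$-linear functional $L$ whose level sets are $\mathbb{Q}$-convex (hence closed under every rational convex combination, in particular under the one above), and colour $x$ by a carefully chosen function of $L(x)$ designed so that no infinite arithmetic progression remains in a single level set.

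The principal obstacle is the $(\Leftarrow)$ construction, where the two requirements are in genuine tension. Closure under rational convex combinations, if required for \emph{every} pair of same-colour points (which is forced when $|S|=3$ and $s_0$ is non-extreme), degenerates to full midpoint closure and rules out AP-freeness by exactly the argument used in the middle subcase above. The strict inequality $|S|\geq 4$ is precisely what weakens the closure condition to a single fixed affine combination, leaving room to build a non-measurable $\mathbb{Q}$-convex colouring (via the Axiom of Choice) that is simultaneously AP-free; the delicate point is choosing $L$ and the function on $L(x)$ so that, along every infinite arithmetic progression $a+nd$, the values $L(a)+nL(d)$ are forced to leave the prescribed level set.
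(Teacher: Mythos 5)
Your overall plan (prove each direction, using the contrapositive for the forward implication) matches the paper's, but the execution has substantive gaps in all three parts.

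\textbf{The extension step for extreme $s_0$ does not work as stated.} Arbitrarily long finite monochromatic APs of a single colour exist in \emph{every} finite colouring by van der Waerden, including AP-free ones, so that fact alone cannot force an infinite mono AP; the extension step has to do real work. But the step you describe is a no-op: a monochromatic copy of $T_N=\bigcup_{k=0}^N(S+kv)$ gives monochromatic copies of $S+kv$ whose $s_0$-images lie inside $T_N$, so the closure rule returns nothing new. To extend to $s_0+(N+1)v$ one needs a copy of $S\setminus\{s_0\}$ \emph{inside} $T_N$ whose $s_0$-image is $s_0+(N+1)v$. The natural candidate $(S\setminus\{s_0\})+(N+1)v$ is not contained in $T_N$ when $|S|\ge 3$ (only $s_1+(N+1)v=s_0+Nv$ lands inside), so the iteration stalls after a single step. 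The paper resolves this (Theorem~\ref{grid} via Lemma~\ref{lemextension}) by first producing a monochromatic lattice \emph{ball} and then exploiting the extremality of $s_0$ to show that \emph{every} nearby lattice point can serve as the apex of a similar copy landing in the ball; this needs density of rational rotations and is genuinely different from, and stronger than, a one-direction enlargement.

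\textbf{The $|S|=3$ middle case is both misstated and unjustified.} For a collinear triple in $\mathbb{Z}^d$ with $s_0$ interior, $s_0=\lambda s_1+(1-\lambda)s_2$ for some rational $\lambda\in(0,1)$, not necessarily $\lambda=\tfrac12$, so the classes are not midpoint-closed in general. Even granting midpoint closure, your final claim --- that an unbounded, midpoint-closed class contains an infinite AP --- is not proved and is not obvious: midpoint closure is an inward operation, and the dyadic closure of a bounded set stays bounded; an actual argument is needed to leverage the unboundedness. The paper handles this case (Lemma~\ref{three}) with a much more careful van der Waerden argument combined with Proposition~\ref{propinterval}, finding a colour change between suitably placed long monochromatic intervals.

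\textbf{The $(\Leftarrow)$ construction is only a plan, and the plan has issues.} You flag it yourself as the principal obstacle, which is fair, but the Hamel-basis route faces two problems. First, for the level-set colouring to be AP-free you need $L(d)\ne 0$ for all $d\ne 0$, i.e.\ $L$ must be a $\mathbb{Q}$-linear \emph{bijection} onto $\mathbb{R}$; such an $L$ is wildly non-$\mathbb{R}$-linear, so similarities of $\mathbb{R}^d$ are not carried to anything structured, and it is unclear which tuples $(L(\phi(s_1)),\dots,L(\phi(s_m)))$ actually arise. Second, the 1D object you would need --- a finite AP-free colouring $f:\mathbb{R}\to[k]$ closed under $\sum\lambda_i y_i$ for \emph{all} same-colour tuples $(y_1,\dots,y_m)$ --- is a strictly stronger requirement than avoiding AM homothets of one fixed $(S,s_0)$ (which is what the 1D construction in the paper, Lemma~\ref{1notextreme}, delivers). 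The paper sidesteps all of this with an explicit geometric colouring: partition $\mathbb{R}^d$ into narrow cones, colour each cone by $\|x\|_1$ with geometric scales and subintervals, and verify directly (Claims~\ref{consecutive} and~\ref{r1}) that a similar copy with non-extreme apex cannot be AM. That construction is elementary and avoids the Axiom of Choice entirely.
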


We prove Theorem~\ref{introZ} in full generality in Section \ref{32}. The ‘only if’ direction will follow from a stronger statement, Theorem~\ref{grid}. In Section \ref{21} we consider only $d=1$, the $1$-dimensional case. We prove some statements similar to Theorem~\ref{introZ} for $d=1$, and illustrate the ideas that are later used to prove the theorem in general.

Problem~\ref{problem1} is related to and motivated by Euclidean Ramsey theory, a topic introduced by Erd\H os, Graham, Montgomery, Rothschild, Spencer, and Strauss \cite{EGMRSS}. Its central question asks to find those finite sets $S\subseteq \mathbb{R}^d$ for which the following is true. \emph{For every $k$ if $d$ is sufficiently large, then every colouring of $\mathbb{R}^d$ using at most $k$ colours contains a monochromatic congruent copy of $S$.}
Characterising sets having the property described above is a well-studied difficult question, and is in general wide open. For a comprehensive overview see Graham's survey~\cite{graham}.

The nature of the problem significantly changes if instead of a monochromatic congruent copy we ask for a monochromatic similar copy, or a monochromatic homothetic copy.
A \emph{(positive) homothetic copy} (or \emph{(positive) homothet}) of a set $H\subseteq \mathbb{R}^d$ is a set $c+\lambda H=\setbuilder{c+\lambda h}{h\in H}$ for some $c\in\mathbb{R}^d$ and some (positive) $\lambda\in \mathbb{R}\setminus\{0\}$. Gallai proved that if $S\subseteq \mathbb{R}^d$ is a finite set, then every colouring of $\mathbb{R}^d$ using finitely many colours contains a monochromatic positive homothetic copy of $S$. This statement first appeared in the mentioned form in the book of Graham, Rothschild, and Spencer~\cite{gallaiR}.

A direct analogue of Gallai's theorem for AM sets is not true: there is no AM similar copy of any $(S,s_0)$ if the whole space is coloured with one colour only. However, there are pairs $(S,s_0)$ for which a direct analogue of Gallai's theorem is true for colourings of $\mathbb{Q}$ with more than one colour. In particular, we prove the following result in Section \ref{inQ}.

\begin{theorem}\label{iQ} Let $S=\{0,1,2\}$ and $s_0=0$. Then every finite colouring of $\mathbb{Q}$ with more than one colour contains an AM positive homothet of $(S,s_0)$.
\end{theorem}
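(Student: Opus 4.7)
I argue by contradiction: suppose $\chi\colon\mathbb{Q}\to[k]$ is a finite colouring with at least two colours and no AM positive homothet of $(\{0,1,2\},0)$.

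First, AM-freeness is equivalent to the following reformulation: each colour class $R\subseteq\mathbb{Q}$ is \emph{leftward-closed}, meaning $b,c\in R$ with $b<c$ implies $2b-c\in R$. This is the definition of AM unwound. Since translations and positive rational scalings preserve $\mathbb{Q}$, AM-freeness, and positive homothets, after such a change of variable I may assume $\chi(0)=A\neq B=\chi(1)$.

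The key tool is the \emph{midpoint lemma}: if $p<q$ have $\chi(p)\neq\chi(q)$, then for every positive integer $n$, $\chi(q-(q-p)/n)\neq\chi(q)$ (and symmetrically $\chi(p+(q-p)/n)\neq\chi(p)$). Indeed, if $\chi(q-(q-p)/n)=\chi(q)$, iterating leftward-closure of the $\chi(q)$-class along the pairs $\{q-(j-1)(q-p)/n,\,q-j(q-p)/n\}$ for $j=1,\dots,n$ puts $q-k(q-p)/n$ in the $\chi(q)$-class for every $0\leq k\leq n$, and at $k=n$ this puts $p$ there, contradicting $\chi(p)\neq\chi(q)$. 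Taking $n=2$ with $(p,q)=(0,1)$ gives $\chi(1/2)\notin\{A,B\}$, already settling $k=2$.

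For $k\geq 3$, the goal is to exhibit $r>0$ with $\chi(r)=\chi(2r)=X\neq A$; leftward-closure of $X$ on $\{r,2r\}$ then yields $0\in X$, contradicting $\chi(0)=A\neq X$. Combining the midpoint lemma and leftward-closures iteratively, and applying the midpoint lemma to scaled pairs $(0,1/m)$ for integers $m\geq 1$, forces the colours of many rationals in $[0,1]$ and its negative translates. For $k=3$ a short case analysis branching on $\chi(2)\in\{A,C\}$ quickly derives $\chi(1/3)=C=\chi(1/2)$ (in the branch $\chi(2)=C$, using closure of the $C$-class on $\{1/2,2\}$ to force $\chi(-1)=C$ and then ruling out $\chi(1/3)=B$ via the 3-AP $-1,-1/3,1/3$ together with leftward-closure of $B$ on $\{1/3,1\}$; the other branch $\chi(2)=A$ collapses even faster via $\chi(3/2)=C$ and leftward-closure of $C$ on $\{1/2,3/2\}$ producing $\chi(-1/2)=C$, conflicting with $\chi(-1/2)=B$ forced by closure of $B$ on $\{1/4,1\}$). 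Once $\chi(1/3)=\chi(1/2)=C$ is established, leftward-closure of $C$ on $\{1/3,1/2\}$ yields $\chi(1/6)=C$, and then on $\{1/6,1/3\}$ yields $0\in C$, the desired contradiction. For $k=4$ a slightly longer case analysis branching on $\chi(2)$ and $\chi(1/3)$ produces analogous doubling conflicts.

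The main obstacle is carrying out this cascade uniformly in $k$: as the number of colours grows, the branching on intermediate forced colours grows, and \emph{a priori} the midpoint lemma can alternate among several non-$A$ colours without immediately producing a doubled pair in one class. I expect the uniform proof to apply the midpoint lemma simultaneously to the scaled colourings $x\mapsto\chi(x/m)$ for all integers $m$ up to some bound $M=M(k)$; the resulting dense set of forced-colour constraints inside $[0,1]$, together with the leftward-closures of each non-$A$ class, should in every branch produce a pair $\{r,2r\}$ inside a common non-$A$ class.
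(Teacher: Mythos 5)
Your proof is not complete for general $k$; you acknowledge this yourself (``The main obstacle is carrying out this cascade uniformly in $k$\dots I expect the uniform proof to\dots should in every branch produce\dots''). What you have is a correct treatment of $k=2$, a plausible sketch for $k=3$ (which does check out), and a hope that the colour-chasing cascade can be made uniform. That hope is not realised, and I do not see how to realise it along your lines: as $k$ grows, the midpoint lemma keeps producing new colours rather than collapsing existing ones, and nothing in your scheme bounds the depth of the case tree in advance. Your ``leftward-closure'' reformulation and midpoint lemma are both correct (and are closely related to the elementary closure steps the paper also uses), but they are local tools; what is missing is a global structure result.

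The paper's proof supplies exactly that missing global ingredient, via an intermediate lemma about $\Z$ rather than $\Q$: if a $k$-colouring of $\Z$ avoids AM positive homothets of $(\{0,1,2\},0)$, then \emph{every colour class is a two-way infinite arithmetic progression}, and moreover the colouring is periodic with a period $F(k)$ bounded uniformly in $k$ (the period bound comes from a density argument: $\sum 1/x_i=1$ over the $k$ common differences forces each $x_i\le N_k$). The lemma is proved by the same style of closure reasoning you use (if $a$ and $a+d$ share a colour, propagate that colour along $\{a+id\}$ in both directions), but it is aimed at a structural classification instead of a contradiction, which is what makes it scale to arbitrary $k$. Once the lemma is in hand, the $\Q$ statement is a one-line pullback: for $x\in\Q$, the colouring $n\mapsto\varphi_\Q(xn/F(k))$ of $\Z$ is AM-free, hence periodic with period $F(k)$, so $\varphi_\Q(0)=\varphi_\Q(x)$; thus any AM-free colouring of $\Q$ is monochromatic. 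In short: the paper replaces your open-ended case analysis with a rigidity theorem on $\Z$ whose bounded period $F(k)$ transfers to $\Q$ by scaling. If you want to complete your approach you would need to prove something equivalent to that periodicity; the isolated midpoint lemma is not enough.
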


In general, we could ask whether every non-monochromatic colouring of $\mathbb{R}^d$ with finitely many colours contains an AM similar copy of every $(S,s_0)$. This, however, is false, as shown by the following example from \cite{EGMRSS3}. Let $S=\{1,2,3\}$ and $s_0=2$. If $\mathbb{R}_{>0}$ is coloured red and $\mathbb{R}_{\leq 0}$ is coloured blue, we obtain a colouring of $\mathbb{R}$ without an AM similar copy of $(S,s_0)$. 
Restricting the colouring to $\mathbb N$, using the set of colours $\{0,1,2\}$ and colouring every $n\in \mathbb{N}$ with $n$ modulo $3$, we obtain a colouring without an AM similar copy of $(S,s_0)$.
However, notice that in both examples each colour class contains an infinite monochromatic AP.

Therefore, our reason, apart from its connections to the Hadwiger-Nelson problem, for finding AM similar copies of $(S,s_0)$ in AP-free colourings was to impose a meaningful condition to exclude `trivial' colourings.

\section{The line}\label{21}
In this section we prove a statement slightly weaker than Theorem~\ref{introZ} for $d=1$. The main goal of this section to illustrate some of the ideas that we use to prove Theorem \ref{introZ}, but in a simpler case. Note that in $\mathbb{R}$ the notion of similar copy and homothetic copy is the same. 

\begin{theorem}\label{introsimilar1}
Let $S\subseteq \mathbb{Z}$ and $s_0\in S$. Then there is an AP-free colouring of $\mathbb{N}$ and of $\mathbb{R}$ without an AM positive homothetic copy of $(S,s_0)$ if and only if $|S|>3$ and $s_0$ is not an extreme point of $S$.
\end{theorem}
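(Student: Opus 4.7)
Assume $|S|\ge 4$ and $s_0$ is non-extreme in $S$. I plan to exhibit an explicit AP-free finite colouring of $\mathbb{N}$ (and of $\mathbb{R}$) avoiding AM positive homothets of $(S,s_0)$. A natural first attempt is to partition the positive axis into consecutive blocks $B_1,B_2,\ldots$ with $|B_{i+1}|\ge K|B_i|$ for a large constant $K=K(S)$, and colour block $B_i$ by the residue of $i$ modulo a small integer. The AP-freeness of any such colouring follows because an infinite AP of common difference $d$ eventually meets blocks of size greater than $d$, hence hits all colours. To rule out AM copies I argue by the number $r$ of blocks a positive homothet meets: the case $r=1$ is monochromatic, and $r=2$ forces $s_0$ to be the unique point of $S$ in one of the two blocks, hence $\min S$ or $\max S$, contradicting non-extremality. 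For $r\ge 3$, the non-extremality of $s_0$ together with $|S|\ge 4$ let one find two consecutive-in-$S$ points sitting in a single block, bounding $\lambda$ from above, while the need for the copy to straddle the minority block bounds $\lambda$ from below; the fast block growth then gives a contradiction. Some configurations such as $\{0,1,2,3\}$ with $s_0=1$, where the minority point can fit alone in a middle block while $s_3,s_4$ land in a common later block, require a refinement -- more colours arranged cyclically, or an auxiliary Behrend-style sparse colouring inside each block -- designed to break the specific linear relation (here $3b=a+2d$) that characterises the ``spread'' AM configurations for this $S$.

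\textbf{Only if direction.} Assume $|S|=3$ or $s_0$ is extreme, and let $\varphi$ be any AP-free finite colouring. Assume toward a contradiction that $\varphi$ has no AM positive homothet of $(S,s_0)$. Then each colour class $C$ is closed under the \emph{extension implication}: $c+\lambda(S\setminus\{s_0\})\subseteq C\Rightarrow c+\lambda s_0\in C$. In the prototypical case $|S|=3$ with $S=\{0,1,2\}$ and $s_0=0$, this reads: $x,y\in C$ with $x<y$ force $2x-y\in C$. In $\mathbb{R}$, iterating backward reflection on any fixed same-coloured pair immediately produces an infinite monochromatic arithmetic progression $y,x,2x-y,3x-2y,\ldots\subseteq C$, contradicting AP-freeness. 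For general $|S|=3$ (weighted reflection or weighted midpoint with a rational weight $\alpha$) or for extreme $s_0$ with $|S|\ge 4$, a single iteration only yields geometrically-spaced points; here I plan to combine with Gallai's theorem -- apply it to $C$ to obtain an arbitrarily long monochromatic AP, then extend it indefinitely via the closure -- to arrive again at an infinite monochromatic AP in $C$. In $\mathbb{N}$ the backward extension truncates at the boundary, so the argument proves instead a periodicity lemma: the closure constraints propagate $\varphi(n+1)$ from $\varphi|_{[1,n]}$ via same-coloured pairs $(k,n+1)$, and finiteness of the palette yields eventual periodicity; each colour class is then a union of residue classes and in particular contains an infinite AP.

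\textbf{Main obstacle.} The heart of the proof has two technical pieces: the refined construction in the \emph{if} direction for those $S$ where the plain alternating block colouring admits a three-block ``spread'' AM copy, and the periodicity lemma in the \emph{only if} direction in $\mathbb{N}$. Both require finely combining the arithmetic structure of $S$ with either the block partition (for the construction) or the closure-induced recurrences (for the periodicity argument). The easiest cases -- $S=\{0,1,2\}$ with any $s_0$, and $S=\{0,1,\ldots,k-1\}$ with $s_0\in\{0,k-1\}$ -- can be handled by the bare block colouring in one direction and by hand-verifying forced parity-type periodicity in the other; passing from these to arbitrary $S\subseteq\mathbb{Z}$ is the main technical content.
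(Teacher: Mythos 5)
Your overall strategy parallels the paper's in the \emph{if} direction and diverges in the \emph{only if} direction, but there are genuine gaps in both.

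\textbf{If direction.} The basic construction you propose --- geometrically growing blocks coloured by the index modulo a small integer --- is exactly the paper's colouring $\varphi_1$ (blocks $[K^i,K^{i+1})$ coloured $i \bmod 2$), and it does handle the case $s_0$ next-to-maximal. You correctly identify that it fails when $s_0$ is next-to-minimal (e.g.\ $S=\{0,1,2,3\}$, $s_0=1$): the lone lower point $r_1$ can land two blocks below while $r_3,r_4$ share a block with the same parity, and indeed one can realise such a configuration, so your claimed $r\ge 3$ contradiction is not correct in general. Where the proof has to happen is precisely the refinement, and you leave it as a choice between ``more colours arranged cyclically'' and ``an auxiliary Behrend-style sparse colouring'' without carrying either out. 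The paper's $\varphi_2$ takes the first route: each geometric block $[L\cdot K^i, L\cdot K^{i+1})$ is subdivided into $L$ equal sub-intervals cycling through $L$ fresh colours, with $L=K\lceil (p_3-p_1)/(p_4-p_3)\rceil$ calibrated so that if $r_3,r_4$ land in the same sub-interval then $r_1$ cannot escape far enough to hit an identically-coloured sub-interval of an earlier block. This calibration and the accompanying inequalities are the crux; nothing you wrote substitutes for them.

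\textbf{Only if direction.} Your closure-based approach is genuinely different from the paper's and works cleanly in one sub-case: for $|S|=3$ and $s_0=\max S$ the closure $x,y\in C \Rightarrow 2y-x\in C$ does propagate forward and gives an infinite monochromatic AP from a single same-coloured pair, in both $\mathbb{N}$ and $\mathbb{R}$ --- simpler than the paper's argument. But the other sub-cases have gaps. For $s_0=\min S$ in $\mathbb{N}$ the backward iteration truncates; your proposed ``periodicity lemma'' (finiteness of palette forces eventual periodicity) is not established, and it is not clear it is true in the form you sketch --- the closure constraint is not a local recurrence on $\varphi(n+1)$ from a bounded window. For $|S|=3$ with $s_0$ interior, the closure maps a same-coloured pair to the \emph{interior} weighted point, so it never extends a monochromatic AP outward; ``extend it indefinitely via the closure'' cannot work here. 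And for extreme $s_0$ with $|S|\ge 4$, the closure step requires a monochromatic copy of $S\setminus\{s_0\}$ (which is generally not an interval and not an AP), so producing one extra point does not set up the next iteration. The paper resolves all of these with Van der Waerden: find a monochromatic homothet of the integer interval $[1,p_n)\cap\mathbb{N}$, then take the first differently-coloured point of $\lambda\mathbb{N}$ beyond (or, for $s_0=\min$, behind, using the two-sided density of blue intervals from the Van der Waerden corollary); and for $|S|=3$ with $s_0$ interior it uses a separate interval proposition. Those arguments are not recoverable from the closure idea alone, so the only-if direction, as proposed, has real holes.
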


To prove Theorem \ref{introsimilar1} it is sufficient to prove the `if' direction only for $\mathbb{R}$ and the `only if' direction only for $\mathbb{N}$. Thus it follows from the three lemmas below, that consider cases of Theorem \ref{introsimilar1} depending on the cardinality of $S$ and on the position of $s_0$.

\begin{lemma}\label{1extreme} If $s_0$ is an extreme point of $S$, then every finite AP-free colouring of $\mathbb{N}$ contains an AM positive homothetic copy of $(S,s_0)$.
\end{lemma}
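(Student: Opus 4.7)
The plan is to argue by contradiction. Since $s_0$ is an extreme point of $S\subseteq\mathbb{Z}$, it is either the minimum or the maximum of $S$; by symmetry I may assume $s_0=\min S$, and after translating $s_0=0$ so that $S=\{0<s_1<\cdots<s_{n-1}\}$. (In the $\max$ case the same argument applies with the forward-extension version of $(\ast)$ below.) Suppose $\chi:\mathbb{N}\to[k]$ is an AP-free finite colouring with no AM positive homothetic copy of $(S,s_0)$. This is equivalent to the following \emph{backward-extension property}:
\begin{equation*}
(\ast)\quad\text{If }\chi(c+\lambda s_1)=\cdots=\chi(c+\lambda s_{n-1})\text{ for some }c,\lambda\in\mathbb{N},\text{ then }\chi(c)\text{ has the same colour.}
\end{equation*}
My goal is to derive from $(\ast)$ an infinite monochromatic arithmetic progression, contradicting AP-freeness.

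First I would fix a length $L\geq s_{n-1}+1$, let $W=W(k,L)$ be the corresponding Van der Waerden number, and set $D=W/(L-1)$. For each integer $N\geq 1$, applying Van der Waerden's theorem inside $[N,N+W-1]$ produces a monochromatic arithmetic progression of length $L$ with some common difference $d\leq D$. Pigeonholing the colour over the infinitely many $N$'s, I fix a colour $\alpha$ arising for an infinite sequence $N_1<N_2<\cdots$. For each such window I have an $\alpha$-coloured AP $A=\{a+Kd:0\leq K\leq M\}$ with $M=L-1$, $a\in[N_i,N_i+W]$, and $d\leq D$. I would then apply $(\ast)$ at scale $\lambda=d$: for every $K\in[-s_1,M-s_{n-1}]$, the $T$-homothet $\{a+(K+s_j)d:j=1,\dots,n-1\}$ lies inside $A$ (since $K+s_j\in[0,M]$ for each $j$), so $(\ast)$ forces $\chi(a+Kd)=\alpha$ and extends $\alpha$ to $K\in[-s_1,M]$. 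Iterating, after $\ell$ rounds $\alpha$ covers $\{a+Kd:-\ell s_1\leq K\leq M\}$, which is valid as long as $a-\ell s_1 d\geq 1$. Running $\ell$ up to $\ell_{\max}=\lfloor(a-1)/(s_1 d)\rfloor$ produces an $\alpha$-AP of common difference $d$, starting point $a^\ast=a-\ell_{\max}s_1 d\in[1,s_1 d]\subseteq[1,s_1 D]$, and length at least $M+\ell_{\max}s_1+1\sim a/d$.

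Finally, I let $N_i\to\infty$ while keeping $L$ (and hence $D$) fixed. Each window yields a pair $(d_i,a^\ast_i)\in[1,D]\times[1,s_1 D]$; since the target set is finite, by pigeonhole a single pair $(d^\ast,a^\ast)$ arises for infinitely many $i$. For each such $i$ the construction outputs an $\alpha$-AP starting at $a^\ast$ with common difference $d^\ast$ and length $\gtrsim a/d^\ast$, which tends to $\infty$ as $a\geq N_i\to\infty$. Therefore $\chi(a^\ast+jd^\ast)=\alpha$ for every $j\geq 0$, producing an infinite monochromatic AP and contradicting AP-freeness. The main step requiring care is the iterated backward extension: each application of $(\ast)$ with $\lambda=d$ extends the $\alpha$-interval by exactly $s_1$ new $K$'s on the left, and I need to verify by induction that after $\ell$ such rounds $\alpha$ still covers the \emph{whole} interval $[-\ell s_1,M]$ in $K$, so that the $T$-homothets needed in the next round remain available; the concluding pigeonhole step and colour bookkeeping are then routine.
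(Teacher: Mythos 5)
Your proof is correct, and it takes a genuinely different route from the paper's. The paper argues directly: it applies Van der Waerden to produce a long monochromatic homothet, locates a colour boundary, and reads off the AM copy there (Case $s_0=\max S$ walks forward to the first non-blue point; Case $s_0=\min S$ uses the version of van der Waerden that gives infinitely many monochromatic blocks with a common difference inside a single residue class $\lambda\mathbb{N}$, then exploits that $\lambda\mathbb{N}$ is not monochromatic to find the largest non-blue point below a blue block). You instead argue by contrapositive: negating the existence of an AM homothet yields the backward-extension rule $(\ast)$, and you iterate $(\ast)$ to push a van der Waerden block downward until you hit the floor of $\mathbb{N}$, landing at a starting point confined to the finite window $[1,s_1D]$ with common difference confined to $[1,D]$. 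The double pigeonhole (first over colours, then over the pair $(d,a^\ast)$) then stitches arbitrarily long $\alpha$-APs with a shared anchor into a single infinite monochromatic AP, contradicting AP-freeness. I checked the induction: after $\ell$ rounds the covered window is $[-\ell s_1,M]$, and in round $\ell+1$ each new index $K\in[-(\ell+1)s_1,-\ell s_1-1]$ has $K+s_j\in[-\ell s_1, s_{n-1}-1]\subseteq[-\ell s_1,M]$ since you took $M\ge s_{n-1}$, so the homothets needed for $(\ast)$ stay inside the already-coloured window; and the range $a^\ast\in[1,s_1D]$, $d\le D$ is fixed once $L$ is fixed, so the pigeonhole is legitimate. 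What each approach buys: the paper's argument is constructive and exhibits the AM homothet explicitly at the colour boundary, but needs a separate, slightly delicate device in the $\min$ case (the colour change inside $\lambda\mathbb{N}$, which guarantees a largest non-blue point below the block); your contrapositive framing handles the bottom-of-$\mathbb{N}$ obstruction uniformly via the pigeonhole over starting residues, at the cost of being nonconstructive and invoking Van der Waerden once per window instead of once overall. Both are sound.
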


\begin{lemma}\label{three} If $|S|=3$, then every AP-free finite colouring of $\mathbb{N}$ contains an AM positive homothetic copy of $(S,s_0)$.
\end{lemma}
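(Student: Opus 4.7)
The plan is to reduce, via Lemma~\ref{1extreme}, to the only case not already handled: $s_0$ is the unique middle element of $S$. Writing $S=\{a,b,c\}$ with $a<b<c$ and $s_0=b$, we may translate and rescale (operations preserving the notion of positive homothet) to assume $S=\{0,p,p+q\}\subseteq\Z$ with $s_0=p$ and $\gcd(p,q)=1$. Suppose for contradiction that $\varphi\colon\N\to[k]$ is an AP-free finite colouring admitting no AM positive homothet of $(S,s_0)$. Since a positive homothet of $(S,s_0)$ in $\N$ has the form $\{x,\,x+pm,\,x+(p+q)m\}$ with $x,m\in\N$, this assumption translates into the closure property
\[
\varphi(x)=\varphi(x+(p+q)m)\;\Longrightarrow\;\varphi(x+pm)=\varphi(x)
\qquad(x,m\in\N).
\]
Equivalently, each colour class $A\subseteq\N$ is closed under the following ``$(p,q)$-weighted midpoint'' operation: if $u<v$ lie in $A$ and $v-u=(p+q)m$ for some positive integer $m$, then $u+pm\in A$.

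The goal is to show that at least one colour class then contains an infinite arithmetic progression, contradicting AP-freeness. Pick any infinite colour class $A$; one exists since $\N$ is infinite and $\varphi$ uses only $k$ colours. I plan to show that every infinite subset of $\N$ closed under the weighted midpoint operation above contains an infinite AP. The argument I envisage is a descent on the modulus $d\ge 1$ of the coarsest sublattice $r+d\Z$ containing $A$. In the warm-up case $p=q=1$ the closure is ordinary midpoint-closedness, and one shows by induction on $b-a$ that any two consecutive integers inside $A$ force the whole interval $[a,b]$ into $A$; hence once $A$ contains two consecutive integers, $A$ contains an infinite ray and we are done. If $A$ contains no consecutive pair, then $d\ge 2$, and rescaling $A$ by $d$ yields a strictly smaller instance of the same problem which we resolve by induction, the terminating case $|A|\le 1$ being ruled out by infinitude of $A$.

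For general coprime $p,q$ the weighted midpoint operation replaces the classical midpoint, and the analogue of the descent has to be set up more carefully. The idea is to exploit that the numerical semigroup $\langle p,\,p+q\rangle$ is cofinite in $\N$ (Sylvester--Frobenius), so that iterating the closure starting from any long arithmetic block already present in $A$ propagates $A$ across all sufficiently large integers in some shifted ray, producing consecutive integers and hence an infinite ray; if no such block ever arises then $A$ is confined to a proper sublattice and we rescale as in the base case. The main obstacle I anticipate is that the weighted midpoint is asymmetric in $p$ and $q$ (we can only interpolate at the middle point $x+pm$, not at $x+qm$), so the neat symmetric descent from $p=q=1$ does not transfer verbatim. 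I expect to need a careful bookkeeping of which residues modulo $p+q$ the iterated closure reaches, possibly using van der Waerden's theorem to seed a long enough monochromatic AP with common difference a multiple of $p+q$, before the sublattice descent can be invoked.
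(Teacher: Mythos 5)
Your reduction to the case where $s_0$ is the middle element of $S=\{0,p,p+q\}$ and your reformulation of ``no AM positive homothet'' as the closure property
\[
\varphi(x)=\varphi\bigl(x+(p+q)m\bigr)\ \Longrightarrow\ \varphi(x+pm)=\varphi(x)
\]
are both correct, and this per-colour-class viewpoint is a genuinely different entry point than the paper's. However, the proof stops at exactly the place where the real work begins. The central lemma you need --- \emph{every infinite subset of $\N$ closed under the weighted midpoint operation contains an infinite AP} --- is left as a plan, and the two steps of that plan each hide a non-trivial gap. First, even for $p=q=1$, the descent's terminating step is unjustified: you need that an infinite midpoint-closed set with no two consecutive elements is confined to a proper sublattice $r+d\Z$ with $d\ge 2$, and this does not follow merely from the minimum gap being $\ge 2$; some argument is needed to rule out an infinite midpoint-closed set with $\gcd(A-A)=1$ that nevertheless avoids consecutive pairs. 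Second, and more seriously, for general coprime $p\ne q$ the operation is one-sided: from $u<v$ with $v-u=(p+q)m$ you only learn $u+pm\in A$, never $u+qm\in A$, so the bisection/filling-in mechanism that makes the $p=q=1$ case tractable breaks down. You flag this yourself (``the neat symmetric descent does not transfer verbatim... I expect to need... possibly using van der Waerden's theorem''), but no replacement argument is given. Until that lemma is proved, the contradiction with AP-freeness never materialises.

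For comparison, the paper does not try to analyse a single colour class intrinsically. It applies Van der Waerden (Corollary~\ref{vdW}) to produce, for some fixed $\lambda$ and along a fixed residue class of $\lambda\N$, infinitely many monochromatic blue blocks of a prescribed length, then uses AP-freeness to locate a non-blue point $q\lambda$ just past one of them, and finally invokes a short elementary interpolation statement (Proposition~\ref{propinterval}): between a blue block $I_1$ ending just before $q\lambda$ and the next blue block $I_3$ of suitable length, there is a ``middle'' block $I_2$ every point of which is the $s_0$-coordinate of some homothet of $S$ with endpoints in $I_1$ and $I_3$; minimality of $I_3$ forces $I_2$ to contain a non-blue point, yielding the AM copy directly. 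This sidesteps both difficulties above: van der Waerden supplies the far-apart monochromatic structure that your closure operation cannot generate on its own, and the interval interpolation avoids any reliance on the symmetric midpoint. If you want to pursue your route, the missing ingredient is precisely a proof of the weighted-midpoint lemma, and I would expect you to end up reintroducing van der Waerden to seed it, at which point the two arguments largely converge.
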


\begin{lemma}\label{1notextreme} If $S\subseteq \mathbb{R}$, $|S|>3$ and $s_0$ is not an extreme point of $S$, then there is an AP-free finite colouring of $\mathbb R$ without an AM positive homothetic copy of $(S,s_0)$.
\end{lemma}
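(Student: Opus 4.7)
My plan is to give, for each $(S,s_0)$ satisfying the hypothesis, an explicit finite AP-free colouring of $\R$ with no AM positive homothetic copy of $(S,s_0)$. After an affine rescaling I may assume $\min S=0$, $\max S=1$, and $s_0 \in (0,1)$. Because $|S|\geq 4$, there is an extra point $r \in S \cap (0,1)\setminus\{s_0\}$, which provides a third constraint on any monochromatic copy of $S\setminus\{s_0\}$ beyond the two extremes; this is exactly what will allow the forced image of $s_0$ to be pinned down, something impossible in the $|S|=3$ case covered by Lemma~\ref{three}.

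The construction is a logarithmic blocking and splits according to whether $r<s_0$ or $r>s_0$ (these cases behave differently since positive homothets cannot be reversed). In the case $r<s_0$, I would fix an integer $N>1/r$ and set $\varphi(x)=\lfloor \log_N |x|\rfloor \bmod 2$ for $x\neq 0$, with $\varphi(0)$ arbitrary. AP-freeness is clear: for any infinite AP $\{a+nb\}$, $|a+nb|\to\infty$, so $\lfloor \log_N|a+nb|\rfloor \bmod 2$ takes both parities infinitely often. To check that no positive homothet is AM, consider $c+\lambda(S\setminus\{s_0\})$ monochromatic, and look at the three ``anchor'' points $\{c,c+\lambda r,c+\lambda\}$. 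If they all lie in a single interval $I_k=[N^k,N^{k+1})$, then $c+\lambda s_0\in(c,c+\lambda)\subseteq I_k$ inherits the same colour. Otherwise, with $c\in I_k$ and $c+\lambda\in I_{k'}$ in distinct same-parity intervals, the choice $N>1/r$ prevents $c+\lambda r$ from landing in the opposite-parity interval $I_{k'-1}$, forcing $c+\lambda r\in I_{k'}$ and therefore $r\lambda\geq N^{k'}-c$; since $s_0>r$, also $s_0\lambda\geq N^{k'}-c$, so $c+\lambda s_0 \in I_{k'}$ as well. Negative or straddling $c$ are handled by the symmetry $\varphi(x)=\varphi(-x)$.

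The hard part will be the case $r>s_0$: then $c+\lambda s_0$ lies on the side of the middle point $c+\lambda r$ opposite to $c+\lambda$, and the log-from-origin colouring can genuinely push $c+\lambda s_0$ into an intermediate different-parity interval (one can check this is so for $r=1/2$, $s_0=1/3$, $N=100$ via $c=1$, $\lambda=20000$). Here I would use a mirrored construction: pick a rapidly growing sequence $A_1<A_2<\cdots\to\infty$ (and symmetrically on the left), and inside each block $[A_i,A_{i+1})$ colour by $\lfloor \log_N(A_{i+1}-x)\rfloor \bmod 2$. Under the substitution $y=A_{i+1}-x$, a positive homothet of $(S,s_0)$ inside one block becomes, in $y$-coordinates, a configuration in which $r$ is replaced by $1-r$ and $s_0$ by $1-s_0$; the hypothesis $r>s_0$ becomes $1-r<1-s_0$, so the $y$-side analysis reduces to the previous case. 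The block ratios $A_{i+1}/A_i$ are chosen to tend to $\infty$, so that no infinite AP remains in a single block (preserving AP-freeness) and no monochromatic copy of $S\setminus\{s_0\}$ can straddle two blocks (ruling out cross-block AM). The main technical obstacle is this last across-block verification; carrying it through carefully is also the template for the higher-dimensional construction underlying Theorem~\ref{grid}.
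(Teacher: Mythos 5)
Your positive-half-line analysis for the case $r<s_0$ is correct, but the sentence ``Negative or straddling $c$ are handled by the symmetry $\varphi(x)=\varphi(-x)$'' is wrong, and this is a genuine gap that breaks the first construction. The symmetry $x\mapsto -x$ turns a positive homothet $c+\lambda S$ into $-c+\lambda(-S)$, which is a positive homothet of $-S$, not of $S$; so the reduction doesn't transfer the positive-half argument to negative or straddling $c$. Concretely, take $S=\{0,\tfrac12,\tfrac34,1\}$, $s_0=\tfrac34$, so $r=\tfrac12<s_0$, and $N=3>1/r$. With $c=-85$, $\lambda=200$ the homothet is $\{-85,15,65,115\}$, and
$\lfloor\log_3 85\rfloor=4$, $\lfloor\log_3 15\rfloor=2$, $\lfloor\log_3 115\rfloor=4$ are all even while $\lfloor\log_3 65\rfloor=3$ is odd. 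So $(\{-85,15,65,115\},65)$ is an AM positive homothet of $(S,s_0)$ in your colouring. The paper sidesteps exactly this problem by colouring $\R_{>0}$ and $\R_{\le 0}$ with \emph{disjoint} colour palettes, so a monochromatic triple can never straddle the origin; if you reuse the same two colours on both sides (and also accumulate colour changes near $0$), straddling configurations are genuinely dangerous.

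The second case $r>s_0$ you explicitly leave unfinished, and as proposed it is not obviously repairable: the mirrored log-blocks $[A_i,A_{i+1})$ all reuse the same two colours, so monochromatic copies of $S\setminus\{s_0\}$ \emph{can} straddle consecutive blocks no matter how fast $A_{i+1}/A_i$ grows (two blocks of the same palette always both contain long stretches of each colour). The paper's construction $\varphi_2$ for this harder case is a different idea: instead of mirroring, it subdivides each geometric interval $[LK^i,LK^{i+1})$ into $L$ equal sub-intervals carrying $L$ distinct colours (with two alternating palettes of size $L$ for even/odd $i$), choosing $L$ large enough that once $r_3,r_4$ fall in one sub-interval, a same-coloured $r_1$ must fall in that very sub-interval, forcing $r_2$ there as well. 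This sub-interval refinement is the key device your proposal is missing for the hard case. Your observation that the pure log-colouring fails for $r>s_0$ (the $r=1/2$, $s_0=1/3$, $N=100$ example) is correct and nicely explains why extra work is needed, but the fix you sketch does not obviously close the gap.
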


Before turning to the proofs, recall Van der Waerden's theorem \cite{vdW} and a corollary of it. A colouring is a \emph{$k$-colouring} if it uses at most $k$ colours.

\begin{theorem}[Van der Waerden \cite{vdW}]\label{vdWoriginal}
For every $k,\ell\in \mathbb{N}$ there is an $N(k,\ell)\in \mathbb{N}$ such that every $k$-colouring of $\{1,\ldots,N(k,\ell)\}$ contains an $\ell$-term monochromatic AP.
\end{theorem}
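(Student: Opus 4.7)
The plan is to prove Van der Waerden's theorem by a double induction: the outer induction is on the length $l$, and the inner induction is on an auxiliary parameter $m$ that tracks so-called \emph{colour-focused} APs. The base case $l=2$ is immediate from pigeonhole, since any $k$-colouring of $\{1,\ldots,k+1\}$ contains two points of the same colour, which form a $2$-term AP. For the inductive step from $l$ to $l+1$, I would assume that $N(k',l)$ exists for every $k'$ and aim to prove that $N(k,l+1)$ exists for every $k$.

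The central notion is that of a colour-focused system: $m$ monochromatic APs $P_1,\ldots,P_m$ of length $l$, with pairwise distinct colours, common differences $d_i$, and last terms $a_i$, such that $a_i+d_i=z$ for a common ``focus'' point $z$. The point of this definition is that if we ever produce $k$ colour-focused APs of length $l$, then $z$ itself must already use one of the $k$ colours appearing on the $P_i$'s, and extending that particular $P_i$ by $z$ immediately yields a monochromatic AP of length $l+1$. It therefore suffices to prove by inner induction on $m$, for $1\leq m\leq k$, the auxiliary statement: there exists $M(k,l,m)$ such that every $k$-colouring of $\{1,\ldots,M(k,l,m)\}$ either contains a monochromatic AP of length $l+1$ or contains $m$ colour-focused APs of length $l$. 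Specialising to $m=k$ then gives the desired $N(k,l+1)$.

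The base case $m=1$ of the inner induction follows directly from $N(k,l)$, with a small amount of slack added so that the focus point $z$ also lies in the interval. The step $m\to m+1$ uses a block argument: partition a sufficiently long interval into consecutive blocks of length $2M(k,l,m)$, regard the colouring of each block as a single ``super-colour'' drawn from a set of size at most $k^{2M(k,l,m)}$, and apply the outer inductive hypothesis in the form $N(k^{2M(k,l,m)},l)$ to this block-level colouring. This produces an AP of $l$ identically coloured blocks. Inside the first block of this AP there are already $m$ colour-focused APs of length $l$ by inner induction; I would combine these with a new ``diagonal'' AP that steps from block to block at the same relative offset as the in-block focus, producing $m+1$ colour-focused APs at a new joint focus point in (or just past) the last of the $l$ blocks.

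The main obstacle will be the bookkeeping in the block step: one has to verify that stretching each of the $m$ in-block APs across the block-level AP yields a length-$l$ AP that is still monochromatic and really does focus at the new joint point, and that the extra diagonal AP contributes either a genuinely new colour (so that we really do gain an $(m+1)$-st focused AP) or shares a colour with one of the old APs and thereby already delivers a monochromatic AP of length $l+1$. Once the parameters are chosen so that all focus points actually lie inside $\{1,\ldots,M(k,l,m+1)\}$, the two inductions close and Van der Waerden's theorem follows.
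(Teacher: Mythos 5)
The paper does not prove Theorem~\ref{vdWoriginal}; it simply cites van der Waerden's original 1927 paper and uses the result (and its Corollary~\ref{vdW}) as a black box, so there is no in-paper proof to compare against. Your argument is the standard double-induction proof via colour-focused arithmetic progressions (essentially van der Waerden's own argument, in the modern presentation found in Graham--Rothschild--Spencer), and it is correct as sketched: the outer induction on $l$ starting from the pigeonhole base case $l=2$, the inner induction on the number $m$ of colour-focused APs with focus $z$, the observation that $m=k$ forces a monochromatic $(l+1)$-term AP because $z$ must reuse one of the $k$ colours, the $m=1$ base case from $N(k,l)$ with a little slack to capture the focus, and the block step in which a length-$l$ super-monochromatic AP of blocks (obtained from $N(k^{2M(k,l,m)},l)$) lets you stretch the $m$ in-block focused APs diagonally and add the constant diagonal through the old focus, producing $m+1$ APs focused at a single new point. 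The two points you flag as needing care --- that each stretched AP stays monochromatic with its original colour, and that the diagonal through the old focus either introduces a genuinely new colour or immediately closes a monochromatic $(l+1)$-AP --- are exactly the right things to verify, and both go through with the block length you chose. Your write-up is a sound, self-contained replacement for the citation, though in the paper's context the citation is entirely appropriate.
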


\begin{corollary}[Van der Waerden \cite{vdW}]\label{vdW}
For every $k,\ell\in \mathbb{N}$ and for every $k$-colouring of $\N$ there is a $t\le N(k,\ell)$ such that there are infinitely many monochromatic $\ell$-term AP of the same colour with difference $t$. 
\end{corollary}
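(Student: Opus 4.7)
The plan is to apply Van der Waerden's theorem (Theorem~\ref{vdWoriginal}) on infinitely many disjoint blocks of $\N$ and then run a pigeonhole argument on the (colour, common difference) pair.

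First I would set $N = N(k,l)$ and partition $\N$ into the consecutive blocks $I_j = \{(j-1)N+1,\ldots,jN\}$ for $j=1,2,\ldots$. Each $I_j$ is a translate of $\{1,\ldots,N\}$, so restricting the given $k$-colouring to $I_j$ and applying Theorem~\ref{vdWoriginal} produces a monochromatic $l$-term AP inside $I_j$. Denote its colour by $c_j$ and its common difference by $t_j$. Since this AP sits inside an interval of length $N$ and has $l\geq 2$ terms, we have $1 \leq t_j \leq N$.

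Now there are only $kN$ possible values for the pair $(c_j, t_j)$, but infinitely many indices $j$. By the pigeonhole principle some pair $(c,t)$ is attained for infinitely many $j$; these yield infinitely many monochromatic $l$-term APs, all of colour $c$ and common difference $t$. The APs coming from distinct blocks $I_j$ lie in disjoint intervals, so they are automatically distinct. Since $t \leq N = N(k,l)$, this gives the desired conclusion.

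There is essentially no obstacle: the only thing to be mildly careful about is to verify that the common difference of an $l$-term AP contained in an interval of length $N$ is at most $N$ (in fact at most $(N-1)/(l-1)$), and that the infinitely many APs produced by the pigeonhole step are genuinely distinct, which is immediate from the disjointness of the blocks $I_j$.
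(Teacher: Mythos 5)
Your proof is correct, and it is the standard derivation of this corollary: partition $\N$ into length-$N(k,l)$ blocks, apply Theorem~\ref{vdWoriginal} in each block to extract a monochromatic $l$-term AP whose difference is necessarily at most $N(k,l)$, and pigeonhole on the colour--difference pair. The paper itself states this as an unproved corollary of Van der Waerden's theorem, so there is no alternative argument to compare against; your block-and-pigeonhole argument is precisely the intended one.
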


\begin{proof}[Proof of Lemma~\ref{1extreme}]
Let $S=\{p_1,\dots,p_n\}$ with $1< p_1< \dots < p_n$ and $\varphi$ be an AP-free colouring of $\mathbb{N}$.
If $s_0$ is an extreme point of $S$, then either $s_0=p_1$ or $s_0=p_n$.

\medskip

\textbf{Case 1:} $s_0=p_n$. By Theorem~\ref{vdWoriginal} $\varphi$ contains a monochromatic positive homothet \mbox{$M+\lambda([1,p_n)\cap \N)$} of $[1,p_n)\cap \N$ of colour, say, blue. Observe that since $\varphi$ is AP-free there is a \mbox{$q\in M+\lambda ([p_n,\infty)\cap \mathbb{N})$} which is not blue. Let $M+q\lambda$ be the smallest non-blue element in $M+\lambda([p_n,\infty)\cap \mathbb{N})$. Then $(M+\lambda(q-p_n)+\lambda S,M+\lambda q)$ is an AM homothet of $(S,s_0)$.
	
	\medskip

\textbf{Case 2:} $s_0=p_1$. By Corollary~\ref{vdW}  there is a $\lambda \in \mathbb{N}$ such that $\varphi$ contains infinitely many monochromatic congruent copies of $\lambda ((1,p_n]\cap \N)$, say of colour blue.
Without loss of generality, we may assume that infinitely many of these monochromatic copies are contained in $\lambda \N$.
Since $\varphi$ is AP-free, $\lambda \N$ is not monochromatic,  and thus there is an $i$ such that $i\lambda$ and $(i+1)\lambda$ are of different colours. 
Consider a blue interval $M+\lambda ((1,p_n]\cap \N)$  such that $M+\lambda>i\lambda$, and let $q$ be the largest non-blue element of $[1,M+\lambda)\cap \lambda \N$. This largest element exists since $\lambda i$ and $\lambda (i+1)$ are of different colour. Then $(q-\lambda p_1+\lambda S,q)$ is an AM homothet of $(S,s_0)$. 
\end{proof}

\begin{proof}[Proof of Lemma~\ref{three}]
Let $S=\{p_1,p_2,p_3\}$ with $1< p_1<p_2 < p_3$ and $\varphi$ be an AP-free colouring of $\mathbb{N}$.
We may assume that $s_0=p_2$, otherwise we are done by Lemma~\ref{1extreme}. 

There is an $r\in \mathbb{Q}_{>0}$ such that $\{q_1,q_2,q_3\}$ is a positive homothet of $S$ if and only if $q_2=r q_1+(1-r)q_3$. Fix an $M\in \mathbb{N}$ for which $Mr\in \N$.
We say that $I$ is an interval of $c+\lambda \mathbb{N}$ of length $\ell$ if there is an interval $J\subseteq \mathbb{R}$ such that $I=J\cap (c+\lambda \mathbb{N})$ and $|I|=\ell$.

\begin{prop}\label{propinterval} Let $I_1$ and $I_3$ be intervals of $\lambda \mathbb{N}$ of length $2M$ and $M$ respectively such that $\max I_1 <\min I_3$. Then there is an interval $I_2\subseteq \lambda\mathbb{N}$ of length $M$ such that $\max I_1 <\max I_2<\max I_3$, and for every $q_2\in I_2$ there are $q_1\in I_1$ and $q_3\in I_3$ such that $\{q_1,q_2,q_3\}$ is a positive homothetic copy of $S$. 
    \end{prop}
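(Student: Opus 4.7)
My plan is to parametrise positive homothets $\{q_1,q_2,q_3\}$ of $S=\{p_1,p_2,p_3\}$ by writing $r=p/q$ in lowest terms (so $q\mid M$ by choice of $M$), $q_1=\lambda i$, $q_3=\lambda k$, $q_2=\lambda m$. The equation $q_2=rq_1+(1-r)q_3$ then gives $m=k+r(i-k)$, and $m\in\mathbb{Z}$ forces $u:=i-k$ to be a multiple of $q$. Writing $I_1=\lambda\{a,\ldots,a+2M-1\}$, $I_3=\lambda\{c,\ldots,c+M-1\}$, and $Y:=c-a-M\geq M$, the proposition reduces to finding $M$ consecutive integers $m=k+ru$ (with $u$ a multiple of $q$, $k\in[c,c+M-1]$, and $k+u\in[a,a+2M-1]$) whose maximum lies strictly in $(a+2M-1,c+M-1)$.

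For each multiple $u$ of $q$, the admissible $k$'s form the integer interval
\[
K(u)=\bigl[\max(c,a-u),\;\min(c+M-1,\,a+2M-1-u)\bigr],
\]
and $K(u)$ equals the full interval $[c,c+M-1]$ exactly when $u\in[-Y-M,-Y]$. Since this range has length $M\geq q$, I take $u_1$ to be its largest multiple of $q$, writing $u_1=-Y-s$ with $s\in\{0,\ldots,q-1\}$; the $m$-values it generates form the interval $[c+ru_1,\,c+M-1+ru_1]$ of length $M$. To gain flexibility when $s$ is large I also use $u_2:=u_1+q$, which lies just outside the full-$K$ region on the upper side and yields $K(u_2)=[c,\,c+M-q+s-1]$ with $m$-interval $[c+ru_1+p,\;c+ru_1+p+M-q+s-1]$.

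Because $p<q\leq M$, the $m$-intervals of $u_1$ and $u_2$ overlap (when $p+s\leq q$ the $u_2$-interval is in fact contained in the $u_1$-interval), so their union is the single integer interval
\[
U=\bigl[c+ru_1,\;c+ru_1+M-1+\max(0,\,p+s-q)\bigr],
\]
every element of which is achievable. I take $I_2/\lambda$ to be the top $M$ integers of $U$. The upper bound $\max I_2<\max I_3$ is immediate from $u_1<0$ (and, in the case $p+s>q$, from the identity $p+s(1-r)-q=-(q-p)(q-s)/q<0<rY$). The lower bound $\max I_1<\max I_2$ is the crux: after simplification it reduces to $Y(q-p)>ps$ in the sub-case $p+s\leq q$, which holds because $Y\geq M\geq q>p$ gives $Y(q-p)\geq q(q-p)>p(q-p)\geq ps$; and in the sub-case $p+s>q$ it reduces to $Y+s>q$, immediate from $Y\geq M\geq q$ and $s\geq 1$. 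The main obstacle I anticipate is the bookkeeping in these two sub-cases and verifying that the top-$M$ window of $U$ sits correctly; the underlying picture — that two adjacent multiples $u_1,u_2$ of $q$ jointly sweep out a long-enough stretch of achievable values — is transparent.
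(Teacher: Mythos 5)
Your proof is correct, and it takes a genuinely different (and in fact more careful) route than the paper's. The paper's proof fixes $q_3 = \min I_3$, picks one $q_1$ in the lower half $I_1^L$ with $rq_1 + (1-r)q_3 \in \mathbb{N}$, and shifts the pair by $i = 0, \dots, M-1$ to produce $I_2$. However, the paper never checks the lower bound $\max I_1 < \max I_2$, and with that specific single-pair choice it can fail: for $S = \{1,2,4\}$ (so $r = 2/3$, $q = 3$, $M = 3$), $I_1 = \{1,\dots,6\}$, $I_3 = \{8,9,10\}$, the paper's construction forces $q_1 = 2$, $q_3 = 8$, $m_0 = 4$, giving $I_2 = \{4,5,6\}$ with $\max I_2 = 6 = \max I_1$. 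Your proof does not have this defect. By rewriting the constraint as $u = i - k \equiv 0 \pmod q$, you locate the full-$K$ interval $[-Y-M,-Y]$ exactly, take the \emph{largest} admissible $u_1 = -Y-s$ in it, and crucially supplement it with $u_2 = u_1 + q$; gluing the two resulting $m$-intervals into one window $U$ of length $M + \max(0, p+s-q)$ is the key extra idea. In the example above this gives $U = \{4,5,6,7\}$ and top-$M$ window $\{5,6,7\}$, which does satisfy $6 < 7 < 10$. Your reduction of the two-sided bound to $Y(q-p) > ps$ (when $p+s \le q$) and $Y+s > q$ (when $p+s > q$) is correct, and both inequalities are justified correctly from $Y \ge M \ge q > p$ and $s \ge 1$ respectively. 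In short: you prove the proposition; the paper's argument, taken literally with the same choice of $M$, leaves a gap precisely at the inequality $\max I_1 < \max I_2$, which your two-$u$ construction closes.
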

    
\begin{proof} Without loss of generality we may assume that $\lambda=1$. Let $I_1^L$ be the set of the $M$ smallest elements of $I_1$.
By the choice of $M$ for any $q_3\in \mathbb N$ the interval $r I_1^L+(1-r)q_3$ contains at least one natural number.
Let $q_3$ be the smallest element of $I_3$ and $q_1\in I_1^L$ such that $r q_1+(1-r)q_3\in \N$.
Then $I_2=\setbuilder{r (q_1+i)+(1-r)(q_3+i)}{0\leq i<M}$ is an interval of $\mathbb{N}$ of length $M$ satisfying the requirements, since $q_1+i\in I_1$ and $q_3+i\in I_3$.
\end{proof}
    
We now return to the proof of Lemma~\ref{three}.
Let $I$ be an interval of $\N$ of length $2M$. By Corollary~\ref{vdW} there is a $\lambda\in \N$ such that $\varphi$ contains infinitely many monochromatic copies of $\lambda I$ of the same colour, say of blue. Moreover, by the pigeonhole principle there is a $c\in \mathbb{N}$ such that infinitely many of these blue copies are contained in $c+\lambda\mathbb{N}$, and without loss of generality we may assume that $c=0$. 
    
Consider a blue interval $[a\lambda,a\lambda+2M\lambda-\lambda]$ of $\lambda \mathbb{N}$ of length $2M$.
Since $\varphi$ is AP-free, $[a\lambda+2M\lambda,\infty)\cap\lambda \mathbb{N}$ is not completely blue.
Let $q\lambda$ be its smallest element  which is not blue and let $I_1=[q\lambda-2M\lambda,(q-1)\lambda]\cap\lambda \mathbb{N}$. Let $I_3$ be the blue interval of length $M$ in $\lambda \mathbb{N}$ with the smallest possible $ \min I_3$ for which $\max I_1 <\min I_3$. Then Proposition~\ref{propinterval} provides an AM positive homothet of $(S,s_0)$. 
    
Indeed, consider the interval $I_2$ given by the proposition. There exists a $q_2\in I_2$ which is not blue, otherwise every point of $I_2$ is blue, contradicting the minimality of $\min I_3$. But then there are $q_1\in I_1$, $q_3\in I_3$ such that $(\{q_1,q_2,q_3\},q_2)$ is an AM homothet copy of $(S,s_0)$.
\end{proof}

\begin{proof}[Proof of Lemma \ref{1notextreme}]
$S$ contains a set $S'$ of $4$ points with $s_0\in S'$ such that $s_0$ is not an extreme point of $S'$. Thus we may assume that $S=\{p_1,p_2,p_3,p_4\}$ with $p_1<p_2<p_3<p_4$ and that $s_0=p_2$ or $s_0=p_3$.
We construct the colouring for these two cases separately.
First we construct a colouring $\varphi_1$ of $\mathbb{R}_{ >0}$ for the case of $s_0=p_3$, and a colouring $\varphi_2$ of $\mathbb{R}_{\geq 0}$ for the case of $s_0=p_2$. Then we extend the colouring in both cases to $\mathbb{R}$.

\medskip

\textbf{Construction of $\bf {\varphi_1}$ ($\bf s_0=p_3$) :} \rm Fix $K$ such that $K>\frac{p_4-p_2}{p_2-p_1}+1$ and let $\{0,1,2\}$ be the set of colours. We define $\varphi_1$ as follows. Colour $(0,1)$ with colour $2$, and for every $i\in \N\cup \{0\}$ colour $[K^i,K^{i+1})$ with $i$ modulo $2$. The colouring $\varphi_1$ defined this way is AP-free, since it contains arbitrarily long monochromatic intervals of colours $1$ and $2$. Thus we only have to show that it does not contain an AM positive homothet of $(S,s_0)$.

Consider a positive homothet $c+\lambda S=\{r_1,r_2,r_3,r_4\}$ of $S$ with $r_1<r_2<r_3<r_4$. If $\{r_1,r_2,r_3,r_4\}\cap [0,1)\neq \emptyset$, then $(\{r_1,\dots,r_4\},r_3)$ cannot be AM. Thus we may assume that $\{r_1,r_2,r_3,r_4\}\cap [0,1)=\emptyset$.

Note that by the choice of $K$ we have
\[Kr_2>
r_2+\frac{p_4-p_2}{p_2-p_1} r_2=
r_2+\frac{p_4-p_2}{p_2-p_1} \left (\lambda(p_2-p_1)+r_1\right )\ge
r_2+\lambda (p_4-p_2)=
r_4.\]
Hence $\{r_2,r_3,r_4\}$ is contained in the union of two consecutive intervals of the form $[K^i,K^{i+1})$. This means that $(\{r_1,\dots,r_4\},r_3)$ cannot be AM since either $\{r_2,r_3,r_4\}$ is monochromatic, or $r_2$ and $r_4$ have different colours.

\medskip
    
\bf Construction of $\bf \varphi_2$ ($\bf s_0=p_2$):  \rm Fix $K$ such that $K>\frac{p_4-p_2}{p_2-p_1}+1$, let $L=K\cdot \left \lceil \frac{p_3-p_1}{p_4-p_3}\right \rceil$ and let $\{0,\dots,2L\}$ be the set of colours. We define $\varphi_2$ as follows. For each odd $i\in \N\cup \{0\}$, divide the interval $[L\cdot K^i,L\cdot K^{i+1})$ into $L$ equal half-closed intervals, and colour the $j$-th of them with colour $j$. For even $i\in \N\cup \{0\}$ divide the interval $[L\cdot K^i,L\cdot K^{i+1})$ into $L$ equal half-closed intervals, and colour the $j$-th of them with colour $L+j$. That is, for $j=1,\dots,L$ we colour $[L\cdot K^i+(j-1)(K^{i+1}-K^i),L\cdot K^i+j(K^{i+1}-K^i))$ with colour $j$ if $i$ is odd, and with colour $j+L$ if $i$ is even. Finally, colour the points in $[0,L)$ with colour $0$. 
    
$\varphi_2$ defined this way is AP-free, since it contains arbitrarily long monochromatic intervals of colours $1,\dots,2L$. Thus we only have to show it does not contain an AM positive homothetic copy of $(S,s_0)$. 

Consider a positive homothet $c+\lambda S=\{r_1,r_2,r_3,r_4\}$ of $S$ with $r_1<r_2<r_3<r_4$. If $\{r_1,r_2,r_3,r_4\}\cap [0,L)\neq \emptyset$, then $(\{r_1,r_2,r_3,r_4\},r_2)$ cannot be AM, thus we may assume that $\{r_1,r_2,r_3,r_4\}\cap [0,L)=\emptyset$. Note that by the choice of $K$ we again have 
\[Kr_2>
r_2+\frac{p_4-p_2}{p_2-p_1} r_2=
r_2+\frac{p_4-p_2}{p_2-p_1} (\lambda(p_2-p_1)+r_1)\ge
r_2+\lambda (p_4-p_2)=
r_4.\]
This means that $\{r_2,r_3,r_4\}$ is contained in the union of two consecutive intervals of the form $[L\cdot K^i,L\cdot K^{i+1})$, which implies that if $\left (\{r_1,r_2,r_3,r_4\},r_2\right )$ is AM, then $\{r_3,r_4\}$ is contained in an interval $[L\cdot K^i+(j-1)(K^{i+1}-K^i),L\cdot K^{i}+j(K^{i+1}-K^i))$ for some $1\leq j \leq L$. However, then by the choice of $L$ we have that $r_1$ is either contained in the interval $[L\cdot K^i,L\cdot K^{i+1})$ or in the interval $[L\cdot K^{i-1},L\cdot K^i)$. Indeed, 
\begin{align*}
    r_3-r_1&\leq \left \lceil \frac{r_3-r_1}{r_4-r_3}\right \rceil (r_4-r_3) \leq \left \lceil \frac{r_3-r_1}{r_4-r_3}\right \rceil (K^{i+1}-K^i)\\
    &=  \left \lceil \frac{p_3-p_1}{p_4-p_3}\right \rceil (K^{i+1}-K^i)
=L(K^i-K^{i-1}).
\end{align*}

Thus, if $r_1$ has the same colour as $r_3$ and $r_4$, then $r_1$ is also contained in the \mbox{interval} \mbox{$[L\cdot K^i+(j-1)(K^{i+1}-K^i),L\cdot K^{i}+j(K^{i+1}-K^i))$}, implying that $\left(\{r_1,r_2,r_3,r_4\},r_2\right)$ is mo\-no\-chro\-ma\-tic.

\medskip
We now extend the colouring to $\mathbb{R}$ in the case of $s_0=p_3$. Let $\varphi_2'$ be a colouring of $\mathbb{R}_{\leq 0}$ isometric to the reflection of $\varphi_2$ over $0$. Then $\varphi_2'$ contains no AM positive homothet of $(S,s_0)$.
If further we assume that $\varphi_1$ and $\varphi_2'$ use disjoint sets of colours, then the union of $\varphi_1$ and $\varphi_2'$ is an AP-free colouring of $\mathbb{R}$ containing no AM positive homothet of $(S,s_0)$.\\
We can extend the colouring similarly in the case of $s_0=p_2$.
\end{proof}

\section{Higher dimensions}\label{3}
In this section we prove Theorem \ref{introZ}.

\subsection{Proof of `if' direction of Theorem~\ref{introZ}}\label{32}
Let $S\subseteq \mathbb{R}^d$ such that $|S|>3$ and $s_0$ is not an extreme point of $S$. To prove the ‘if’ direction of Theorem~\ref{introZ}, we prove that there is an AP-free colouring of $\mathbb{R}^d$ without an AM similar copy of $(S,s_0)$. (Note that for the proof of Theorem~\ref{introZ}, it would be sufficient to prove this for $S\subseteq \mathbb{Z}^d$.)

Recall that $C\subseteq \mathbb{R}^d$ is a \emph{convex cone} if for every $x,y\in C$ and $\alpha,\beta\geq 0$, the vector $\alpha x+\beta y$ is also in $C$.
The \emph{angle} of $C$ is $\sup_{x,y\in C\setminus\{o\}}\angle(x,y)$.

We partition $\R^d$ into finitely many convex cones $C_1\cup\dots\cup C_m$, each of angle at most $\alpha=\alpha(d,S)$, where $\alpha(d,S)$ will be set later. We colour the cones with pairwise disjoint sets of colours as follows. First, we describe a colouring $\varphi$ of the closed circular cone $C=C(\alpha)$ of angle $\alpha$ around the line $x_1=\dots=x_d$. Then for each $i$ we define a colouring $\varphi_i$ of $C_i$ using pairwise disjoint sets of colours in a similar way. More precisely, let $f_i$ be an isometry with $f_i(C_i)\subseteq C$, and define $\varphi_i$ such that it is isometric to $\varphi$ on $f_i(C_i)$.

It is not hard to see that it is sufficient to find an AP-free colouring $\varphi$ of $C$ without an AM similar copy of $(S,s_0)$. Indeed, since the cones $C_i$ are coloured with pairwise disjoint sets of colours, any AP or AM similar copy of $(S,s_0)$ is contained in one single $C_i$.

We now turn to describing the colouring $\varphi$ of $C$.
Note that by choosing $\alpha$ sufficiently small we may assume that $C\subseteq \mathbb{R}_{\geq 0}^d$.
For $x\in \mathbb R^d$ let $\|x\|_1=|x_1|+\dots+|x_d|$.  Then for any $x\in \mathbb{R}^d$ we have
\begin{equation}\label{upperf}
\|x\|\leq \|x\|_1\leq \sqrt{d}\|x\|.
\end{equation}

Let $S=\{p_1,\dots,p_n\}$ and fix $K$ such that  \[K>1+2\sqrt{d}\max_{p_i,p_j,p_l,p_{\ell}\in S, p_i\neq p_j}\frac{\|p_k-p_\ell\|}{\| p_i-p_j \|}.\] 
For a sufficiently large $L$, to be specified later, we define $\varphi: C\to \{0,1,\dots,2L\}$ as

\begin{equation*}\def\stackalignment{l}
    \varphi(x) =
    \begin{cases*}
      0 & if $\|x\|_1<L$ \\
      j & \stackunder{if for some even $i\in \N$ and $j\in [L]$  we have}{$\|x\|_1\in [L\cdot K^i+(j-1)(K^{i+1}-K^i),L\cdot K^i+j(K^{i+1}-K^i))$}\\
     L+j & \stackunder{if for some odd $i\in \N$ and $j\in [L]$ we have}{$\|x\|_1\in [L\cdot K^i+(j-1)(K^{i+1}-K^i),L\cdot K^i+j(K^{i+1}-K^i))$.}
    \end{cases*}
\end{equation*}

$\varphi$ is AP-free since any halfline in $C$ contains arbitrarily long monochromatic sections of colours $1,\dots,2L$. Thus we only have to show that it does not contain an AM similar copy of $(S,s_0)$. 
Let $(\{r_1,\dots,r_n\},q_0)$ be a similar copy of $(S,s_0)$, with 
\begin{equation}\label{orderf}\|r_1\|_1\leq \|r_2\|_1\leq \dots \leq \|r_n\|_1.
\end{equation}

\begin{claim}\label{consecutive} $\{\|r_2\|_1,\dots,\|r_n\|_1\}$ is contained in the union of two consecutive intervals of the form $[L\cdot K^j, L\cdot K^{j+1})$.
\end{claim}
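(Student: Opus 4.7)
The plan is to establish the quantitative bound $\|r_n\|_1 < K\,\|r_2\|_1$, from which the claim follows at once in the relevant regime $\|r_2\|_1\geq L$: picking $j\geq 0$ with $\|r_2\|_1\in[LK^j,LK^{j+1})$, the bound yields $\|r_n\|_1<LK^{j+2}$, so by (\ref{orderf}) every $\|r_i\|_1$ with $i\geq 2$ lies in $[LK^j,LK^{j+2})=[LK^j,LK^{j+1})\cup[LK^{j+1},LK^{j+2})$, a union of two consecutive intervals of the prescribed form. (The degenerate case $\|r_2\|_1<L$, where $r_1,r_2$ already lie in the color-$0$ region, will be absorbed into the final AM analysis and does not need to be dealt with here.)

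To prove $\|r_n\|_1<K\,\|r_2\|_1$, I mimic the chain of inequalities from the 1-dimensional case of Lemma~\ref{1notextreme}, but exploit the cone $C\subseteq\R_{\geq 0}^d$ to translate $\|\cdot\|_1$-data into a bound on the similarity ratio. Let $\lambda>0$ be the scaling factor of the similar copy, so that there is a bijection $\pi:[n]\to[n]$ with $\|r_i-r_j\|=\lambda\,\|p_{\pi(i)}-p_{\pi(j)}\|$ for all $i,j$. Since $r_1,r_2\in C$ have nonnegative coordinates and $\|r_1\|_1\leq\|r_2\|_1$, the componentwise triangle inequality gives
\[\|r_2-r_1\|_1\leq\|r_1\|_1+\|r_2\|_1\leq 2\,\|r_2\|_1,\]
and combining with the left inequality of (\ref{upperf}) and the lower bound $\|r_2-r_1\|\geq\lambda\min_{k\neq\ell}\|p_k-p_\ell\|$ yields
\[\lambda\leq\frac{2\,\|r_2\|_1}{\min_{k\neq\ell}\|p_k-p_\ell\|}.\]

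For the other direction, the $\|\cdot\|_1$-triangle inequality together with the right inequality of (\ref{upperf}) give
\[\|r_n\|_1-\|r_2\|_1\leq\|r_n-r_2\|_1\leq\sqrt{d}\,\|r_n-r_2\|=\sqrt{d}\,\lambda\,\|p_{\pi(n)}-p_{\pi(2)}\|\leq\sqrt{d}\,\lambda\max_{k,\ell}\|p_k-p_\ell\|.\]
Substituting the upper bound on $\lambda$ and invoking the defining inequality for $K$ finishes the computation.

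The only step that really uses more than the 1-dimensional argument is the conversion of $\|\cdot\|_1$-coloring data into a bound on $\lambda$, and this is precisely what the cone restriction $C\subseteq\R_{\geq 0}^d$ is designed for: without it, two points $r_1,r_2$ could have arbitrarily small $\|\cdot\|_1$ while being far apart in Euclidean norm (e.g.\ $r_1=-r_2$), and no such bound on $\lambda$ would be available. Once this conversion is in hand, the remainder is essentially a direct mimicry of the $K$-scaling estimate from Lemma~\ref{1notextreme}.
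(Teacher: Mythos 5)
Your proof is correct and is essentially the paper's own computation, just repackaged: where you introduce the scaling factor $\lambda$ explicitly, bound it by $2\|r_2\|_1/\min_{k\neq\ell}\|p_k-p_\ell\|$, and then feed that into $\|r_n\|_1-\|r_2\|_1\le\sqrt d\,\lambda\max\|p_k-p_\ell\|$, the paper collapses those two steps by writing $\|r_i-r_2\|=\frac{\|r_i-r_2\|}{\|r_2-r_1\|}\|r_2-r_1\|$ and observing that the ratio equals a ratio of distances in $S$; both routes land on $\|r_i\|_1<K\|r_2\|_1$ and hence the two-consecutive-intervals conclusion. One correction to your closing remark: the cone restriction $C\subseteq\R_{\geq 0}^d$ is \emph{not} what makes the $\lambda$-bound work here, and your purported counterexample $r_1=-r_2$ does not break it, since $\|r_1-r_2\|\le\|r_1\|+\|r_2\|\le\|r_1\|_1+\|r_2\|_1\le 2\|r_2\|_1$ holds for arbitrary vectors. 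The cone (and the small aperture $\alpha$) is used elsewhere in the argument — in Claim~\ref{r1} and in the observation that $\|\cdot\|_1$ acts as a linear functional on $C$ so that $q_0\ne r_1,r_n$ — but Claim~\ref{consecutive} itself is a norm computation valid throughout $\R^d$.
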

\begin{proof}
For any $r_i$ with $i\geq 2$ we have

\begin{flalign*}
\hspace{2cm}\|r_i\|_1 & \le  \|r_2\|_1+\|r_i-r_2\|_1  & \\
\hspace{2cm} & \leq  \|r_2\|_1+\sqrt{d}\|r_i-r_2\| & \text{by \eqref{upperf}} \\
\hspace{2cm}& =  \|r_2\|_1+\sqrt{d}\frac{\|r_i-r_2\|}{\|r_2-r_1\|}\|r_2-r_1\|
  &  \\ 
 \hspace{2cm} & < \|r_2\|_1+\frac{K-1}{2} \cdot \|r_2-r_1\|  & \text{by the definition of $K$} \\
\hspace{2cm}& \leq \|r_2\|_1+\frac{K-1}{2}(\|r_2\|+\|r_1\|) & \text{by the triangle inequality}\\
\hspace{2cm}& \leq \|r_2\|_1+\frac{K-1}{2}(\|r_2\|_1+\|r_1\|_1) & \text{by \eqref{upperf}} \\
\hspace{2cm}& \leq K\|r_2\|_1 & \text{by \eqref{orderf}}.
\end{flalign*}
\end{proof}

Assume now that $(\{r_1,\dots,r_n\},q_0)$ is AM. Note that $\|x\|_1$ is $\sqrt{d}$ times the length of the projection of $x$ on the $x_1=\dots=x_d$ line for $x\in \mathbb{R}_{\geq 0}^d$. Thus for any similar copy $\psi(S)$ of $S$ we have $\|\psi(s_0)\|_1\in \conv \setbuilder{\|p\|_1}{p\in \psi(S\setminus\{s_0\})}$, and we know that $q_0\neq r_1,r_n$ because $s_0$ is not an extreme point of $S$. This means that $\varphi(r_1)=\varphi(r_n)$, and there is exactly one $h\in \{2,\dots,n-1\}$ with $\varphi(r_h)\neq \varphi(r_1)$. 
This, by Claim \ref{consecutive} and by the definition of $\varphi$, is only possible if $h=2$ and there are $i\in \mathbb{N}$ and $j\in [L]$ such that 
\[\|r_3\|_1,\dots,\|r_{n-1}\|_1\in [L\cdot K^i+(j-1)(K^{i+1}-K^i),L\cdot K^i+j(K^{i+1}-K^i)).\]

The following claim finishes the proof.
\begin{claim}\label{r1}
If $L$ is sufficiently large and $\alpha$ is sufficiently small, then $\|r_1\|_1$ is contained in $[L\cdot K^{i-1}, L\cdot K^i)\cup [L\cdot K^{i}, L\cdot K^{i+1})$.
\end{claim}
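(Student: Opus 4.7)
The strategy is to argue by contradiction. Suppose $\|r_1\|_1<LK^{i-1}$; then since $\|r_3\|_1\ge LK^i$ we immediately get the lower bound
\[
\|r_3\|_1-\|r_1\|_1>LK^i-LK^{i-1}=LK^{i-1}(K-1),
\]
and the plan is to contradict this with an upper bound on $\|r_3\|_1-\|r_1\|_1$ coming from the rigidity of the similar copy together with the narrowness of the sub-interval containing $r_3,\ldots,r_{n-1}$.

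The first step is to pin down $r_n$. The AM hypothesis forces $\varphi(r_n)=\varphi(r_{n-1})$, while Claim~\ref{consecutive} together with $\|r_3\|_1\ge LK^i$ confines $\|r_n\|_1$ to $[\|r_{n-1}\|_1,LK^{i+2})$. Because the two big intervals $[LK^i,LK^{i+1})$ and $[LK^{i+1},LK^{i+2})$ use disjoint colour palettes (determined by the parity of $i$), the point $r_n$ must fall into the same $j$-th sub-interval of $[LK^i,LK^{i+1})$ as $r_{n-1}$. In particular, both $r_3$ and $r_n$ have $1$-norms inside a common interval of length $K^{i+1}-K^i=K^i(K-1)$.

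The second step is to convert this $1$-norm closeness to Euclidean closeness using the narrowness of the cone, and then propagate it to $r_1$ via the similarity. Decomposing $r_3-r_n$ into its components parallel and orthogonal to the diagonal $\mathbf u=(1,\ldots,1)/\sqrt d$, the parallel component has length $|\|r_3\|_1-\|r_n\|_1|/\sqrt d<K^i(K-1)/\sqrt d$, while the orthogonal component is at most $(\|r_3\|+\|r_n\|)\sin\alpha\le 2LK^{i+1}\tan\alpha/\sqrt d$, using $\|r_a\|\le\|r_a\|_1/(\sqrt d\cos\alpha)<LK^{i+1}/(\sqrt d\cos\alpha)$ for points of $C$. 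Choosing $\alpha$ with $\tan\alpha\le(K-1)/(2LK)$ bounds each component by $K^i(K-1)/\sqrt d$, hence $\|r_3-r_n\|\le 2K^i(K-1)/\sqrt d$. Since $\{r_1,\ldots,r_n\}$ is a similar copy of $S$ with $r_3\ne r_n$, the ratio $\|r_1-r_3\|/\|r_3-r_n\|$ equals the ratio of the corresponding distances in $S$, which by the very choice of $K$ is at most $M_0:=\max_{p_i\ne p_j\in S}\|p_k-p_\ell\|/\|p_i-p_j\|$. Combining with \eqref{upperf},
\[
\|r_3\|_1-\|r_1\|_1\le \|r_3-r_1\|_1\le \sqrt d\,\|r_3-r_1\|\le 2M_0 K^i(K-1).
\]

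This contradicts the earlier lower bound $LK^{i-1}(K-1)$ as soon as $L>2M_0 K$, so the claim holds by first choosing $L>2M_0 K$ (depending only on $S$ and $d$) and then $\alpha$ small enough relative to $L$ and $K$. The main obstacle is precisely this last estimate: the transverse extent of the cone at the scale $LK^{i+1}$ must be forced below the tiny width $K^i(K-1)$ of a single sub-interval, which is why $\alpha$ can be chosen only after $L$ and $K$ are fixed. This is consistent with the construction, where $\alpha=\alpha(d,S)$ is allowed to depend on both.
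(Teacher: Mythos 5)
Your proof is correct and follows essentially the same approach as the paper's: both exploit the fact that two of the $r_i$ (you use $r_3,r_n$; the paper uses $r_{n-1},r_n$) lie in a single sub-interval of $\|\cdot\|_1$-width $K^{i+1}-K^i$, convert this to Euclidean proximity via the small cone aperture, propagate to $r_1$ through the fixed distance ratios of the similar copy, and then choose $L$ (depending on $K$) and $\alpha$ (depending on $L,K$) to make the resulting bound smaller than $LK^{i-1}(K-1)$. Your parallel/orthogonal decomposition and the paper's trapezoid diagonal are the same geometric estimate in different clothing, and the contradiction framing versus direct estimate is only stylistic.
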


The claim indeed finishes the proof. By the definition of $\varphi$, then $\varphi(r_1)=\varphi(r_n)$ implies \[\|r_1\|_1 \in [L\cdot K^i+(j-1)(K^{i+1}-K^i),L\cdot K^i+j(K^{i+1}-K^i)).\] But then we have \[\|r_2\|_1\in [L\cdot K^i+(j-1)(K^{i+1}-K^i),L\cdot K^i+j(K^{i+1}-K^i))\] as well, contradicting $\varphi(r_2)\neq \varphi(r_1)$.

\medskip

\begin{proof}[Proof of Claim~\ref{r1}] It is sufficient to show that $\|r_{n-1}\|_1-\|r_1\|_1<LK^i-LK^{i-1}$. We have
\[
    \|r_{n-1}\|_1-\|r_1\|_1\leq \sqrt{d}\|r_{n-1}-r_1\|=\sqrt{d}\|r_{n}-r_{n-1}\|\frac{\|r_{n-1}-r_n\|}{\|r_n-r_{n-1}\|}<  \|r_{n-1}-r_n\|\frac{K-1}{2},
\]
by \eqref{upperf} and by the definition of $K$. Let $H_1$ and $H_2$ be the hyperplanes orthogonal to the line $x_1=\dots=x_d$ at distance $\frac{1}{\sqrt{d}}(L\cdot K^i+(j-1)(K^{i+1}-K^i))$ and $\frac{1}{\sqrt{d}}(L\cdot K^i+j(K^{i+1}-K^i))$ from the origin respectively. Since $\|r_n\|_1, \|r_{n-1}\|_1\in [L\cdot K^i+(j-1)(K^{i+1}-K^i),L\cdot K^i+j(K^{i+1}-K^i))$ we have that $r_n$ and $r_{n-1}$ are contained in the intersection $T$ of $C(\alpha)$ and the slab bounded by the hyperplanes $H_1$ and $H_2$. 

Thus $\|r_{n-1}-r_n\|$ is bounded by the length of the diagonal of the trapezoid which is obtained as the intersection of $T$ and the $2$-plane through $r_n$, $r_{n-1}$ and the origin. Scaled by $\sqrt{d}$, this is shown in Figure~\ref{triangle}.

\begin{figure}[h]
\centering
\hspace{-1cm}
{\includegraphics[width=14.5cm]{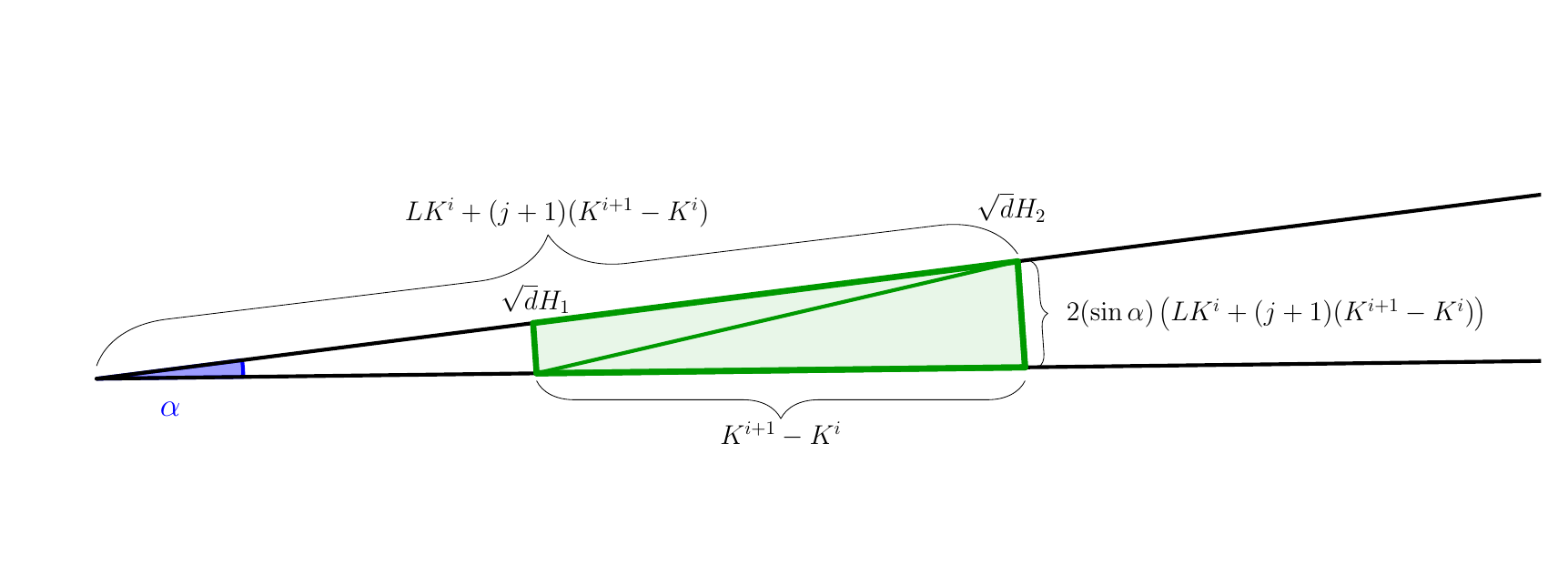}}
\caption{$T\cap C(\alpha)$}\label{triangle}
\end{figure}

From this, by the triangle inequality we obtain
\begin{multline*}
\|r_{n-1}-r_n\|\leq  \frac{1}{\sqrt{d}}\left (K^{i+1}-K^i+2(\sin\alpha)\left (LK^i+(j+1)(K^{i+1}-K^i)\right )\right )\\ \leq \frac{1}{\sqrt{d}} \left (K^{i+1}-K^i+2\sin\alpha \cdot LK^{i+1}\right )\leq \frac{2}{\sqrt{d}}\left (K^{i+1}-K^i\right ),
\end{multline*}
where the last inequality holds if $\alpha$ is sufficiently small. Combining these inequalities and choosing $L=\frac{K^2}{\sqrt{d}}$ we obtain the desired bound $\|r_{n-1}\|_1-\|r_n\|_1<LK^i-LK^{i-1}$, finishing the proof of the claim.
\end{proof}

\subsection{Proof of `only if' direction of Theorem \ref{introZ}}

The `only if' direction follows from Theorem \ref{introsimilar1} in the case of $d=1$, and from the following stronger statement for $d\geq 2$ (since in this case $s_0$ is an extreme point of $S$).

\begin{theorem}\label{grid} Let $S\subseteq \mathbb{Z}^d$ and $s_0\in S$ be an extreme point of $S$. Then for every $k$ there is a constant $\Lambda=\Lambda(d,S,k)$ such that the following is true. Every $k$-colouring of $\mathbb{Z}^d$ contains either an AM similar copy of $(S,s_0)$ or a monochromatic similar copy of $\Z^d$ with an integer scaling ratio $1\leq \lambda \leq \Lambda$.
\end{theorem}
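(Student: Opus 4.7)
The plan is to combine the Gallai--Witt theorem with a simple extension argument that uses the no-AM hypothesis. Translate so that $s_0=0$ and set $S'=S\setminus\{0\}$; extremality of $s_0$ gives a vector $v\in\R^d$ with $v\cdot p>0$ for every $p\in S'$.

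The key observation is an extension lemma: assuming $\varphi$ has no AM similar copy of $(S,s_0)$, if $t\in\Z^d$, $\lambda\in\N$ and $R\in O(d)$ satisfy $t+\lambda R(S)\subseteq\Z^d$ and $t+\lambda R(S')$ is monochromatic of colour $\chi$, then $\varphi(t)=\chi$. Indeed, since $R(s_0)=R(0)=0$, otherwise $(t+\lambda R(S),t)$ would itself be an AM similar copy of $(S,s_0)$. Applying the Gallai--Witt theorem (multidimensional van der Waerden, a consequence of Hales--Jewett) to $S'$ yields a monochromatic positive homothet $c+\lambda S'$ of some colour $\chi$ with integer $\lambda \le \lambda_0(d,|S|,k)$, and the extension lemma upgrades this to a monochromatic copy $c+\lambda S$.

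Next, I iterate the extension lemma to fill a full sublattice. For each $R$ in the finite group $\mathcal{R}\subseteq O(d)$ of signed coordinate permutations (each of which preserves $\Z^d$), extremality of $s_0$ ensures that $R(S')$ lies strictly on one side of a hyperplane through the origin; hence, applied with rotation $R$, the extension lemma appends new monochromatic points just ``outside'' any currently monochromatic region, in the direction opposite to $R(S')$. Varying $R$ over $\mathcal{R}$ gives extension directions spanning $\R^d$; iterating fills a coset $c+\lambda\Z^d$ (possibly with a further integer scaling factor bounded in terms of $S$) entirely with colour $\chi$. This is the desired monochromatic similar copy of $\Z^d$ at integer scale $\lambda \le \Lambda(d,S,k)$, where $\Lambda$ is controlled by the Gallai--Witt constant $\lambda_0(d,|S|,k)$ and an inflation factor depending on $S$.

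The main obstacle is showing that the iterated extensions actually fill a full similar copy of $\Z^d$ rather than only a half-space or a proper sub-sublattice. For this one needs the extension vectors $\{\lambda R(p):R\in\mathcal{R},\,p\in S'\}$ to generate a lattice similar to $\Z^d$; when signed permutations alone do not suffice (because $S'$ fails to span $\Z^d$), one enlarges $\mathcal{R}$ to include additional rational lattice-preserving similarities (such as Pythagorean rotations in dimension two) at a modestly enlarged effective scale, worsening $\Lambda$ by a factor depending only on $d$ and $S$. The remaining bookkeeping is to verify that finitely many iterations of the extension step suffice to reach every point of the target coset, which is a routine geometric argument once the generators of the extension lattice are in hand.
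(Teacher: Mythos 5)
Your extension lemma and its derivation from extremality are correct, and the use of Gallai--Witt to produce an initial monochromatic configuration is the same engine the paper uses. But the iteration as you set it up does not get off the ground. After the first step you have exactly $|S|$ monochromatic points, namely $c$ and $c+\lambda S'$. To colour a further point $t$ via the extension lemma you need to find, among these few points, a subset that is a $\lambda'$-scaled, rotated, $\Z^d$-compatible copy of $S'$ with $t$ as its apex; generically no such subconfiguration exists inside a single translate of $S$, so there is nothing to iterate. The later remarks about signed coordinate permutations, Pythagorean rotations, and ``routine bookkeeping'' do not address this well-foundedness problem: they concern which directions you could extend in if you already had a large monochromatic region, not how to obtain one from $|S|$ points.

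The paper's fix, which is the crucial missing idea, is to apply Gallai's theorem not to $S'$ but to the set $H$ of all lattice points inside a ball of radius $R$, where $R$ is chosen (Lemma~\ref{lemextension}) so that \emph{every} lattice point within distance $1$ of a ball of radius $\geq R$ can serve as the apex of some $\Z^d$-compatible similar copy of $(S,s_0)$ whose base lies inside the ball. This requires a genuinely dense family of rotations, which the paper gets from $O(d)\cap\Q^{d\times d}$ being dense in $O(d)$ together with compactness (Lemma~\ref{lemextension2}); the finite group of signed permutations is not enough. Starting instead from a monochromatic copy $c+\lambda H$, one extends the colour to the next spherical shell of $\lambda\Z^d$, and the enlarged set is again a ball of lattice points at a larger radius, so the extension lemma applies at every subsequent step; the whole coset $\lambda\Z^d+c$ is filled in the limit. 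I'd suggest you reformulate your argument around an isotropic seed set like $H$ rather than $S$ itself, and replace signed permutations with dense rational rotations.
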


Before the proof we need some preparation.

\begin{lemma}\label{lemextension}
	 There is an $R>0$ such that for any ball $D$ of radius at least $R$ the following is true. For every $p\in \Z^d$ outside $D$ and is at distance at most $1$ from $D$ there is a similar copy $(S',s_0')$ of $(S,s_0)$ in $\mathbb{Z}^d$ such that $s_0'=p$ and $S'\setminus \{s_0'\}\subset D$.
\end{lemma}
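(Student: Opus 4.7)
The plan is to construct the desired similar copy in the form $S':=p+\lambda M(S-s_0)$, where $M$ is an integer matrix with $M^TM=k^2I$ (so $M=kR$ for a rotation $R\in O(d)$) and $\lambda$ is a positive integer. Since $s_0$ is extreme in $S$, there exist a unit vector $v$ and $\alpha>0$ with $\langle s-s_0,v\rangle\ge\alpha\|s-s_0\|$ for every $s\in S\setminus\{s_0\}$, so $S-s_0$ lies in the closed cone around $v$ of half-angle $\theta:=\arccos\alpha<\pi/2$. Write also $\rho:=\max_{s\in S}\|s-s_0\|$ and $\mu:=\min_{s\in S\setminus\{s_0\}}\|s-s_0\|>0$.

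Next I would build a finite toolbox of integer similarities. The rotations $M/k\in O(d)$ coming from integer matrices with $M^TM=k^2I$ are dense in $O(d)$: for $d=2$ this follows immediately from Gaussian integers, and for $d\ge 3$ from applying the Cayley transform $(I-A)(I+A)^{-1}$ to rational skew-symmetric matrices $A$ and then clearing denominators, together with one reflection to reach the component of determinant $-1$. Fix $\delta>0$ with $\theta+\delta<\pi/2$; by compactness of $S^{d-1}$, select finitely many pairs $(M_1,k_1),\ldots,(M_N,k_N)$ such that the unit vectors $(M_i/k_i)v$ form a $\delta$-net in $S^{d-1}$, and set $K:=\max_i k_i$.

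For the geometric step, given any ball $D$ of radius $\ge R$ centred at $c$ and any $p\in\Z^d$ within distance $1$ of $D$, let $u:=(c-p)/\|c-p\|$ (arbitrary unit vector if $p=c$) and pick $i$ with $\angle((M_i/k_i)v,u)\le\delta$. Then $S':=p+\lambda M_i(S-s_0)$ lies in $\Z^d$, contains $s_0':=p$, and is similar to $S$ with ratio $\lambda k_i$. Each offset $y=\lambda M_i(s-s_0)$ for $s\in S\setminus\{s_0\}$ has $\|y\|\in[\lambda k_i\mu,\lambda k_i\rho]$ and makes angle at most $\theta+\delta<\pi/2$ with $u$. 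Expanding $\|y-(c-p)\|^2\le R^2$ shows that $p+y\in D$ whenever $\|y\|\in[A-B,A+B]$ with $A:=\|c-p\|\cos(\theta+\delta)$ and $B:=\sqrt{R^2-\|c-p\|^2\sin^2(\theta+\delta)}$. Using $\|c-p\|\le R+1$ and $A^2-B^2=\|c-p\|^2-R^2\le 2R+1$, for $R$ sufficiently large one has $A-B=O(1)$ while $A+B=\Theta(R)$, so the range $[(A-B)/\mu,(A+B)/\rho]$ of admissible values of $\lambda k_i$ has length $\Theta(R)$; once $R$ exceeds some $R_0=R_0(d,S,K,\delta)$ it contains a positive integer multiple of $k_i$, and the corresponding $\lambda$ yields $S'\setminus\{p\}\subset D$.

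The genuinely nontrivial ingredient is the density of the set $\{M/k:M\in\Z^{d\times d},\,M^TM=k^2I\}$ in $O(d)$ for $d\ge 3$, and I would spell out the Cayley-transform construction carefully there. The remaining estimates are elementary trigonometric book-keeping: one picks $\delta$ small enough that $\theta+\delta<\pi/2$ and $R_0$ large enough that the admissible interval is wider than $K$ and contains a positive integer multiple of each $k_i$.
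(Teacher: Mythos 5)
Your proof is correct and rests on the same core ingredients as the paper's: a separating cone coming from the extremality of $s_0$, density of rational orthogonal matrices in $O(d)$ (the paper cites this; you sketch it via the Cayley transform), compactness to extract a finite net of integer similarities with bounded denominator, and then a scaling argument to place the scaled rotated copy inside $D$. The paper packages these steps slightly differently—working at unit scale with $\mathbb{Q}_N^d$ in an auxiliary Lemma and then clearing denominators—while you carry out the estimate directly in $\mathbb{Z}^d$ with matrices $M$ satisfying $M^TM=k^2I$, but the mathematical content is the same.
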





\begin{proof}Since $s_0$ is an extreme point of $S$, there is a hyperplane that separates $s_0$ from $S\setminus\{s_0\}$. Thus if $R$ is sufficiently large, there is an $\varepsilon>0$ with the following property. If $p$ is outside $D$ and is at distance at most $1$ from $D$, then there is a congruent copy $(S'',s_0'')$ of $(S,s_0)$ with $s_0''=p$ and such that every point of $S''\setminus \{s_0''\}$ is contained in $D$ at distance at least $2\varepsilon$ from the boundary of $D$. 

Let $\mathbb{Q}_N=\setbuilder{\frac{a}{b}}{a,b\in \mathbb{Z}, b\leq N}\subseteq \mathbb{Q}$. We use the fact that $O(\mathbb{R}^d)\cap \mathbb{Q}^{d\times d}$, the set of rational rotations, is dense in $O(\mathbb{R}^d)$ (see for example \cite{dense}). This, together with the compactness of balls implies that we can find an $N=N(\varepsilon)\in \mathbb{N}$ and $(S^*,s_0'*)$ in $\mathbb{Q}_N^d$ which is a rotation of $(S'',s_0'')$ around $p$, $\varepsilon$-close to $(S'',s_0'')$.  With this $S^*\setminus \{s_0^*\}$ is contained in $D$. Moreover, if $R$ is sufficiently large, then dilating $(S^*,s_0'*)$ from $s_0'*$ by $N!$, $S'\setminus \{s_0'\}$ is contained in $D\cap \mathbb{Z}^d$.
\end{proof}

The proof of the following variant of Gallai's theorem can be found in the Appendix.

\begin{theorem}[Gallai]\label{gallaiZ} Let $S\subseteq\mathbb{Z}^d$  be finite. Then there is a $\lambda(d,S,k)\in \mathbb{Z}$ such that every $k$-colouring of $\mathbb{Z}^d$ contains a monochromatic positive homothet of $S$ with an integer scaling ratio bounded by $\lambda(d,S,k)$.
\end{theorem}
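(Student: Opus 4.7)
The plan is to deduce the quantitative bound from the qualitative Gallai--Witt theorem by a standard compactness argument in $[k]^{\mathbb{Z}^d}$. The qualitative statement I would invoke is: for every finite $S\subseteq\mathbb{Z}^d$ and every finite colouring of $\mathbb{Z}^d$, there is a monochromatic positive homothet $c+\lambda S\subseteq\mathbb{Z}^d$ with $c\in\mathbb{Z}^d$ and $\lambda\in\mathbb{Z}_{>0}$. I would derive this from the Hales--Jewett theorem applied to the cube $S^N$ via the map $(s_{i_1},\dots,s_{i_N})\mapsto s_{i_1}+\cdots+s_{i_N}$; a monochromatic combinatorial line with $t$ variable coordinates then projects to a monochromatic positive homothet of $S$ with integer scaling factor $t$, living entirely in $\mathbb{Z}^d$.

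Next, suppose for contradiction that no $\lambda(d,S,k)$ as in the statement exists. Then for each $n\in\mathbb{N}$ one can select a $k$-colouring $\varphi_n:\mathbb{Z}^d\to[k]$ in which no monochromatic positive homothet of $S$ with integer scaling in $[1,n]$ occurs. Since $\mathbb{Z}^d$ is countable and $[k]$ is finite, the space $[k]^{\mathbb{Z}^d}$ is sequentially compact in the product topology, and a standard diagonal argument yields a subsequence $(\varphi_{n_j})_j$ converging pointwise to some limit colouring $\varphi:\mathbb{Z}^d\to[k]$.

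By the qualitative step, $\varphi$ contains some monochromatic positive homothet $c+\lambda_0 S$ with integer $\lambda_0\geq 1$. Because $c+\lambda_0 S$ is a finite set, for all sufficiently large $j$ the colourings $\varphi_{n_j}$ and $\varphi$ agree on it, so $c+\lambda_0 S$ is also monochromatic under $\varphi_{n_j}$. Choosing $j$ large enough that additionally $n_j\geq\lambda_0$ contradicts the defining property of $\varphi_{n_j}$, and so the desired bound exists.

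The main obstacle is ensuring that the qualitative step is carried out over $\mathbb{Z}^d$ with an \emph{integer} scaling factor rather than merely in $\mathbb{R}^d$ or $\mathbb{Q}^d$: this is essential both for the bound to make sense as an integer and for the single comparison $n_j\geq\lambda_0$ to yield a contradiction. The Hales--Jewett reduction sketched above handles this cleanly, because the scaling factor it produces is precisely the number of active coordinates in a combinatorial line, which is a positive integer by construction.
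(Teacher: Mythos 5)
Your proof is correct, and the core reduction is the same as the paper's: pull back the colouring along $\Psi(x_1,\dots,x_N)=\sum_{i}s_{x_i}$ and apply Hales--Jewett, so that a monochromatic combinatorial line projects to a monochromatic positive homothet $c+tS$ in $\mathbb{Z}^d$ with integer scaling $t\geq 1$. (The paper's version uses weights $\lambda_i$ to make $\Psi$ injective, but this is not essential to the argument; your unweighted map already works because the image of a combinatorial line is a bona fide homothet as long as at least one coordinate is active.) The one thing to notice is that your compactness step is superfluous: the Hales--Jewett reduction already hands you the quantitative bound directly, since the scaling factor $t$ of the resulting homothet is the number of active coordinates and hence satisfies $1\le t\le N$, where $N=N(|S|,k)$ is the Hales--Jewett number. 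So you may simply set $\lambda(d,S,k)=N(|S|,k)$ and stop, with no need to pass to a pointwise limit in $[k]^{\mathbb{Z}^d}$. This is essentially what the paper does, reading the bound off the construction (it gets $Nn^N$ because of the nonunit weights). Your compactness argument is a perfectly valid general device for upgrading qualitative to quantitative Ramsey statements, but here it trades an explicit, effective bound for an ineffective one obtained by contradiction.
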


\begin{proof}[Proof of Theorem \ref{grid}]

Let $R$ be as in Lemma~\ref{lemextension} and let $H$ be the set of points of $\Z^d$ contained in a ball of radius $R$. By Theorem~\ref{gallaiZ} there is a monochromatic, say blue, homothetic copy $H_0=c+\lambda H$ of $H$ for some integer $\lambda\leq\lambda(d,H,k)$. Without loss of generality we may assume that $H_0=B(O, \lambda R)\cap \lambda\mathbb{Z}^d$ for some $O\in \mathbb{Z}^d$, where $B(O, \lambda R)$ is the ball of radius $\lambda R$ centred at $O$.

Consider a point $p\in \lambda \Z^d\setminus H$ being at distance at most $\lambda$ from $H_0$. If $p$ is not blue then using Lemma~\ref{lemextension} we can find an AM similar copy of $(S,s_0)$. Thus we may assume that any point $p\in \lambda \Z^d\setminus H_0$ which is $\lambda$ close to $H_0$ is blue as well. 

By repeating a similar procedure, we obtain that there is either an AM similar copy of $(S,s_0)$, or every point of $H_i=B(O,\lambda R+i \lambda )\cap \lambda\Z^d$ is blue for every $i\in \N$. But the latter means $\lambda \Z^d$ is monochromatic, which finishes the proof.
\end{proof}

\subsection{Finding an AM positive homothet}

The following statement shows that it is not possible to replace an AM similar copy of $(S,s_0)$ with a positive homothet of $(S,s_0)$ in the `only if' direction of Theorem~\ref{introZ}.

\begin{proposition}\label{introsimilar2} Let $S\subseteq\mathbb{Z}^d$ such that $S$ is not contained in a line and $s_0\in S$.
Then there is an AP-free colouring of $\mathbb{R}^d$ without an AM positive homothet of $(S,s_0)$.
\end{proposition}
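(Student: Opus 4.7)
The plan is to construct $\varphi$ as a coordinate-wise product of $d$ pull-backs of a fixed AP-free finite colouring of $\R$ along carefully chosen linear projections $\R^d\to\R$. Because positive homothets only translate and scale (with no rotation allowed), one can arrange the projection directions so that the product kills the AM condition coordinate by coordinate, while a spanning condition on these directions forces AP-freeness.

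For the set-up, I would write $S=\{p_1,\ldots,p_n\}$ with $s_0=p_1$. Since $S$ is not contained in a line, a short case analysis shows there exist $p_a,p_b\in S\setminus\{s_0\}$ with $e_a:=p_a-s_0$ and $e_b:=p_b-s_0$ linearly independent. The subspaces $e_a^\perp,e_b^\perp\subseteq\R^d$ are each $(d-1)$-dimensional, so their sum is all of $\R^d$, and one can choose $v_1,\ldots,v_d\in e_a^\perp\cup e_b^\perp$ spanning $\R^d$ (for example, a basis of $e_a^\perp$ together with one vector in $e_b^\perp\setminus e_a^\perp$). Fixing any finite AP-free $\psi\colon\R\to[K]$ (such a $\psi$ is easy to construct, e.g.\ by a dyadic interval scheme analogous to $\varphi_1$ in the proof of Lemma~\ref{1notextreme}), I would then set
\[\varphi(x)=\bigl(\psi(\langle x,v_1\rangle),\ldots,\psi(\langle x,v_d\rangle)\bigr)\in[K]^d.\]

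To verify AP-freeness, suppose $\{c+ju:j\in\N\}$ is $\varphi$-monochromatic. Each coordinate $j\mapsto\psi(\langle c,v_i\rangle+j\langle u,v_i\rangle)$ is then constant, and whenever $\langle u,v_i\rangle\neq 0$ this gives an infinite monochromatic AP of $\psi$ with nonzero step, contradicting the choice of $\psi$. Hence $\langle u,v_i\rangle=0$ for every $i$, and since $\{v_i\}$ spans $\R^d$, $u=0$. To rule out an AM positive homothet, suppose $(c+\lambda S,\,c+\lambda s_0)$ were AM, so $\varphi(c+\lambda p_k)=\tau$ for every $k\geq 2$ and $\tau\neq\varphi(c+\lambda s_0)$. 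For any coordinate $i$ with $v_i\perp e_a$, the identity $\langle p_a,v_i\rangle=\langle s_0,v_i\rangle$ gives $\varphi_i(c+\lambda p_a)=\varphi_i(c+\lambda s_0)$; combined with $\varphi_i(c+\lambda p_a)=\tau_i$ this forces $\tau_i=\varphi_i(c+\lambda s_0)$. The analogous computation handles $v_i\perp e_b$. Running this for every $i$ yields $\tau=\varphi(c+\lambda s_0)$, the desired contradiction.

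The only nontrivial step I anticipate is the set-up: verifying that two linearly independent difference vectors $e_a,e_b$ out of $S\setminus\{s_0\}$ exist for every $s_0\in S$. This follows from $S$ not lying on a line by a short case analysis on whether a fixed non-collinear triple of $S$ already contains $s_0$. With this in hand, both AP-freeness and the failure of AM copies are immediate from the product structure, so no further technical work is required.
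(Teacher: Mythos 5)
Your proof is correct, and it takes a genuinely different route from the paper's. The paper first reduces to a single non-degenerate triangle in the plane (by affine invariance it takes $S=\{(0,1),(1,1),(1,0)\}$ with $s_0=(1,1)$), colours $\mathbb{R}^2$ with four colours using a sequence of nested squares $Q_i$ split along the diagonal $x=y$, verifies by a geometric size estimate that no AM positive homothet survives, and then lifts to $\mathbb{R}^d$ by crossing with an auxiliary AP-free colouring of the orthogonal complement. You instead build a product colouring $\varphi(x)=\bigl(\psi(\langle x,v_1\rangle),\ldots,\psi(\langle x,v_d\rangle)\bigr)$ directly, with each projection direction $v_i$ orthogonal to one of the two linearly independent difference vectors $e_a=p_a-s_0$, $e_b=p_b-s_0$; the decisive point is that a positive homothet preserves the directions $e_a,e_b$, so coordinate by coordinate the purportedly odd-coloured point $c+\lambda s_0$ is forced to agree with the common colour via $c+\lambda p_a$ or $c+\lambda p_b$, while the spanning condition on $\{v_i\}$ yields AP-freeness. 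This avoids the reduction to $|S|=3$ and the geometric estimates, applies verbatim to arbitrary non-collinear $S\subseteq\mathbb{R}^d$ (not only $S\subseteq\mathbb{Z}^d$), and in fact rules out negative homothets as well; the trade-off is $K^d$ colours where $K$ is the palette of $\psi$, rather than the paper's dimension-independent bound for the planar part. It is worth noting the argument is intrinsically tied to homothets: a similarity rotates $e_a,e_b$, the orthogonality relations $v_i\perp e_a$, $v_i\perp e_b$ are no longer inherited, and the product structure loses its power, consistent with the fact that the paper needs the separate cone-based construction to handle similar copies in the `if' direction of Theorem~\ref{introZ}.
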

\begin{figure}
\centering
\hspace{-1cm}
{\includegraphics[width=14.5cm]{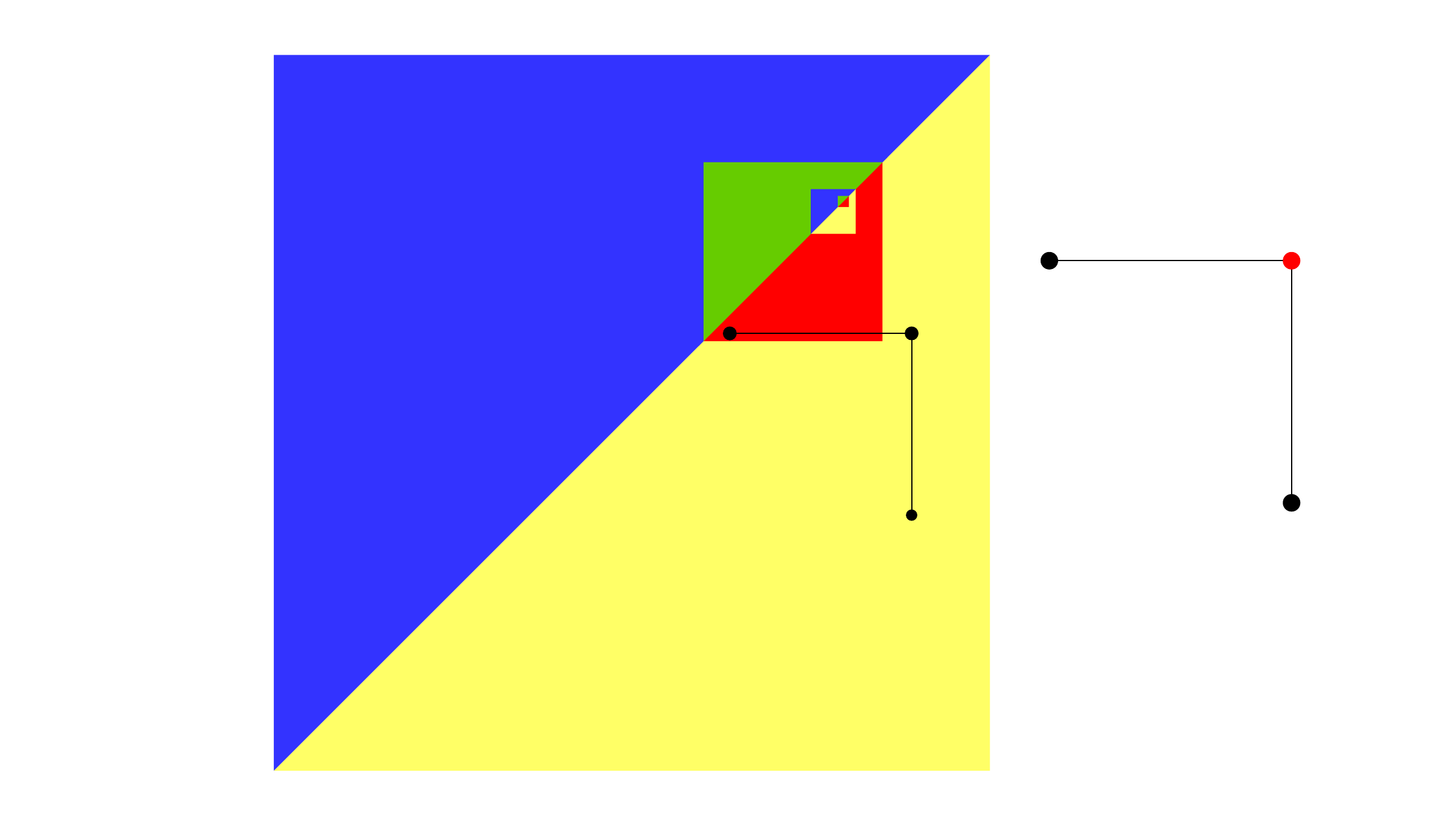}}
\caption{A $4$-colouring avoiding AM homothets of $(S,s_0)$.}\label{mondrian}
\end{figure}
\begin{proof}
We may assume that $|S|=3$ and thus $S\subseteq \mathbb{R}^2$.
Since the problem is affine invariant, we may  further assume that $S=\{(0,1),(1,1),(1,0)\}$ with $s_0=(1,1)$, $s_1=(0,1)$ and $s_2=(1,0)$. First we describe a colouring of $\mathbb{R}^2$ and then we extend it to $\mathbb{R}^d$.

For every $i\in\N$ let $Q_i$ be the square $[-4^i,4^{i-1}]\times [-4^i,4^{i-1}]$, and $Q_0=\emptyset$. Further let $H_+$ be the open halfplane $x>y$ and $H_-$ be the closed halfplane $x\leq y$.  We colour $\mathbb{R}^2$ using four colours, green, blue, red and yellow as follows (see also Figure \ref{mondrian}).
\begin{itemize}
    \item \bf Red: \rm For every odd $i\in \N$ colour $(Q_i\setminus Q_{i-1})\cap H_+$ with red.
    \item \bf Yellow: \rm For every even $i\in \N$ colour $(Q_i\setminus Q_{i-1})\cap H_+$ with yellow.
    \item \bf Green: \rm For every odd $i\in \N$ colour $(Q_i\setminus Q_{i-1})\cap H_-$ with green.
    \item \bf Blue: \rm For every even $i\in \N$ colour $(Q_i\setminus Q_{i-1})\cap H_-$ with blue.
\end{itemize}

A similar argument that we used in the proof of Lemma \ref{1notextreme} shows that this colouring $\varphi_1$ is AP-free. Thus we only have to check that it contains no AM positive homothet of $(S,s_0)$. Let $S'$ be a positive homothet of $S$. First note that we may assume that $S'$ is contained in one of the halfplanes bounded by the $x=y$ line, otherwise it is easy to see that it cannot be AM. Thus by symmetry we may assume that $s_0'\in Q_i\setminus Q_{i-1}\cap H_+$ for some $i\in \N$.

If the $x$-coordinate of $s_0'$ is smaller than $4^{i-2}$ or the $y$-coordinate of $s_0'$ is smaller than $-4^{i-1}$, then $s_1'\in H_+$ implies $s_1'\in Q_i\setminus Q_{i-1}$, and hence $S'$ cannot be AM.
Otherwise, necessarily $\|s_0'-s_1'\|\leq 2\cdot 4^{i-1}$.
Since $\|s_0'-s_1'\|=\|s_0'-s_2'\|$, this means that the $y$-coordinate of $s_2'$ is at least $-4^{i-1}-2\cdot 4^{i-1}>-4^i$. Thus, in this case $s_2'$ is contained in $Q_i\setminus Q_{i-1}$, and hence $S'$ cannot be monochromatic.

To finish the proof, we extend the colouring to $\mathbb{R}^d$. Let $T \cong \mathbb{R}^{d-2}$ be the orthogonal complement of $\R^2$. Fix an AP-free colouring $\varphi$ of $T$ using the colour set $\{1,2\}$. Further let $\varphi_2$ be a colouring of $\mathbb{R}^2$ isometric to $\varphi_1$, but using a disjoint set of colours. For every $t\in T$ colour $\mathbb{R}^2+t$ by translating $\varphi_i$ if $\varphi(t)=i$. This colouring is AP-free and does not contain any AM positive homothet of $(S,s_0)$.
\end{proof}

\section{Colouring $\mathbb{Q}$}\label{inQ}

Before proving Theorem \ref{iQ} we note that we could not replace $S=\{0,1,2\},s_0=0$ with any arbitrary pair $(S,s_0)$ where $s_0$ is an extreme point of $S$. For example let $S=\{0,1,2,3,4\}$, $s_0=0$, and colour $\mathbb{Q}$ as follows.
Write each non-zero rational as $2^t\frac pq$ where $p$ and $q$ are odd, and colour it red if $t$ is even and blue if $t$ is odd. This colouring is non-monochromatic, but does not contain any AM homothet of $(S,s_0)$ (in fact not even similar copies).
	
Indeed, let $c+\lambda S=\{r_1,r_2,r_3,r_4,r_5\}=\{r_1,r_1+\lambda,r_1+2\lambda, r_1+3\lambda, r_1+4\lambda\}$ be a homothet of $S$. Note that for any $x,y,\alpha\in \mathbb{Q}$ we have that $x$ and $y$ are of the same colour if and only if $\alpha x$ and $\alpha y$ have the same colour. That is, multiplying each $r_i$ with the same $\alpha$ does not change the colour pattern. Thus we may assume that $r_1=2^ab$ and $\lambda=2^tp$ for some $a,t\in \mathbb{N}$ and odd integers $b,p$.
	
If $a<t$, then $\{r_1,r_2,r_3,r_4,r_5\}$ is monochromatic, thus we may assume that $t\leq a$ and divide by $2^t$, to obtain $\{2^{a-t}b,2^{a-t}b+p,2^{a-t}b+2p,2^{a-t}b+3p,2^{a-t}b+4p\}\subseteq \mathbb{Z}$. But then two of $2^{a-t}b+p,2^{a-t}b+2p,2^{a-t}b+3p,2^{a-t}b+4p$ are odd, and one of them is $2 \bmod 4$, implying that $\{2^{a-t}b+p,2^{a-t}b+2p,2^{a-t}b+3p,2^{a-t}b+4p\}$ cannot be monochromatic.

    \bigskip

To prove Theorem~\ref{iQ} we need the following lemma.

\begin{lemma}\label{lemZ}
If a $k$-colouring $\varphi$ of $\Z$ does not contain an AM positive homothet of $(S,s_0)$, where $S=\{0,1,2\}$ and $s_0=0$, then every colour class is a two-way infinite AP. Moreover, there is an $F=F(k)$ such that $\varphi$ is periodic with $F$.
\end{lemma}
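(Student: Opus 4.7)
My approach is to first rephrase AM-avoidance as a local rule and then iterate it via a colour-budget argument. Avoiding an AM positive homothet of $(\{0,1,2\},0)$ is equivalent to the condition that whenever $x<y$ in $\Z$ satisfy $\varphi(x)=\varphi(y)$, the reflected point $2x-y$ has colour $\varphi(x)$ as well; otherwise the three-term AP $(2x-y,x,y)$ with common difference $y-x>0$ would itself be an AM homothet. Iterating this rule yields \emph{backward extension}: if $a<b$ lie in the same colour class $C$, then $\{a-m(b-a):m\ge 0\}\subseteq C$, so every colour class with at least two elements is unbounded below.

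The nontrivial content is to show that each colour class is also unbounded above and has no interior gaps, and is therefore a two-way infinite AP. For the upward direction I plan to argue by colour budget: suppose $C=\varphi^{-1}(c)$ has a maximum $M$; pick $a\in C$ with $a<M$, set $d=M-a$, and define $c_j:=\varphi(M+jd)$ for $j\ge 1$, so $c_0=c$ and $\varphi(M-jd)=c$ for every $j\ge 0$ by backward extension. An induction on $j$ then shows that $c_0,c_1,\ldots,c_j$ are pairwise distinct: if $c_i=c_j$ with $0\le i<j$, applying the rule to the pair $M+id<M+jd$ forces $\varphi(M+(2i-j)d)=c_i$, and since $2i-j<i$ this point lies either in the backward ray (where $\varphi\equiv c$, forcing $c_i=c$, impossible for $i\ge 1$) or at an earlier position $M+(2i-j)d$ with $0\le 2i-j<j$ and $2i-j\ne i$ (where $\varphi=c_{2i-j}\ne c_i$ by induction). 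Taking $j=k$ produces $k+1$ pairwise distinct colours, contradicting $\varphi:\Z\to[k]$. A structurally identical induction carried out inside a hypothetical interior gap of $C$ in the coset $a+d^{*}\Z$ (where $d^{*}=\min\{y-x:x,y\in C,\,x<y\}$) rules out gaps, and a similar argument centred at an isolated point rules out singleton colour classes.

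Once every colour class is the full AP $a_c+d_c\Z$, the colouring $\varphi$ is periodic with period $F:=\operatorname{lcm}\{d_c\}$. Since the classes partition $\Z$ we have $\sum_c 1/d_c=1$ with at most $k$ terms, and a short inductive argument (essentially the bound on moduli in a disjoint covering system) then yields an explicit bound $F\le F(k)$ depending only on $k$. I expect the main obstacle to be the no-gaps step: inside a bounded interval between two consecutive elements of $C$, the rule applications coming from several different pairs in $C$ and from the interposed ``gap colours'' interact more subtly than in the unbounded case, and one must organise the induction carefully to ensure that the colour budget is nevertheless exhausted at step $k$.
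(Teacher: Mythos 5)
Your reformulation of AM-avoidance as the reflection rule ($\varphi(x)=\varphi(y)$ with $x<y$ forces $\varphi(2x-y)=\varphi(x)$), the backward-extension consequence, and the final periodicity step via $\sum_c 1/d_c = 1$ all match the paper. Your colour-budget induction ruling out a maximum of a class $C$ is correct as stated: the hypothesis that $c_j\ne c_0$ for all $j\ge 1$ is exactly what is guaranteed at a maximum, and it is genuinely what makes the induction close.

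But that same hypothesis is precisely what is \emph{not} available when you try to transport the argument to an interior gap, and this is a real gap, not merely a detail to organise. At the left end of a gap along $a+d^{*}\Z$ you know $c_0=c$ and $c_1\ne c$, but since $C$ is (by your own earlier step) unbounded above, there will typically be further indices $j\ge 2$ with $c_j=c$ again; the ``pairwise distinct'' conclusion then simply fails, and no reorganisation of the same induction can rescue it. The paper avoids this by proving a sharper statement that works uniformly: in the forward ray $\{a+jd:j\ge 0\}$, \emph{each colour other than $c$ occurs at most once} (if $a+jd$ and $a+kd$ were the two smallest of some colour $c'\ne c$ with $j<k$, the reflection rule applied to this pair lands at $a+(2j-k)d$, which is either in the all-$c$ backward ray or at an index $<j$ where the colour is not $c'$ — contradiction). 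Hence there are at most $k-1$ non-$c$ positions in the ray, so there is a \emph{largest} one, say $a+j_0d$; then $a+(j_0+1)d$ and $a+(j_0+2)d$ are both $c$ and $(a+j_0d,\,a+(j_0+1)d,\,a+(j_0+2)d)$ is an AM triple. This single argument shows the entire forward ray lies in $C$ for \emph{any} monochromatic pair $(a,a+d)$, so taking $d=d^{*}$ kills gaps, unboundedness, and the ``at most one of each other colour'' phenomenon in one stroke. You should replace both your max-ruling-out and your gap-ruling-out steps with this. Finally, your ``similar argument centred at an isolated point'' for singleton colour classes is also not fleshed out; the paper instead observes that once every class used $\ge 2$ times is a two-way AP, the set of singleton-class points is the complement of a finite union of APs, which is either empty or contains an infinite AP — and the latter would require infinitely many colours.
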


\begin{proof}[Proof of Theorem \ref{iQ}] Let $\varphi_\Q$ be a colouring of $\mathbb{Q}$ without an AM positive homothet of $(S,s_0)$. For any $x\in \mathbb{Q}$, we define a colouring $\varphi$ of $\Z$ as $\varphi(n)=\varphi_\Q(\frac{xn}{F(k)})$. Applying Lemma~\ref{lemZ} for $\varphi$ implies that $\varphi_\Q(0)=\varphi(0)=\varphi(F(k))=\varphi_\Q(x)$. this means $0$ and $x$ have
the same colour in $\varphi_\Q$. 
\end{proof}

\begin{proof}[Proof of Lemma~\ref{lemZ}] 
We first prove that if $a$ and $a+d$ for some $d>0$ have the same colour, say red, then every point in the two-way infinite AP $\{a+id\mid i\in \Z\}$ is also red. 
    
For this first we show that $\{a-id\mid i\in \N\}$ is red. Indeed, if it is not true, let $j\in \mathbb{N}$ be the smallest such that $a-jd$ is not red. But then $\{a-jd,a-(j-1)d,a-(j-2)d\}$ is AM, a contradiction. 
    
Second, we show that $\{a+id\mid i\in \N\}$ contains at most one element of any other colour. Assume to the contrary that there are at least two blue elements in $\{a+id\mid i\in \N\}$, and let $a+jd$, $a+kd$ be the two smallest with $j<k$. But then since $a+jd-(k-j)d$ is red, we have that $\{a+jd-(k-j)d, a+jd, a+kd\}$ is AM, a contradiction. 
    
Finally we show that every element $\{a+id\mid i\in \N\}$ is red. Assume that $(a+jd)$ is the largest non-red element. (By the previous paragraph this largest $j$ exists.) But then, since $a+(j+1)d$ and $a+(j+2)d$ are red, we obtain that $\{a+jd,a+(j+1)d, a+(j+2)d\}$ is AM, a contradiction.

If $d$ is the smallest difference between any two red numbers, this shows that the set of red numbers is a two-way infinite AP. Thus to finish the first half of the claim, we only have to show that if a colour is used once, then it is used at least twice. But this follows from the fact that the complement of the union of finitely many AP is either empty or contains an infinite AP.

In order to prove the second part, it is sufficient to show that there is an $N_k$ depending on $k$ such that the following is true. If $\mathbb{Z}$ is covered by $k$ disjoint AP, then the difference of any of these AP's is at most $N_k$. Thus, by considering densities, the following claim finishes the proof of Lemma~\ref{lemZ}.
    
\begin{claim} There is an $N_k$ such that if for $x_1,\dots,x_k\in \mathbb{N}$ we have $\sum_{i=1}^k\frac{1}{x_i}=1$, then $x_i\leq N_k$ for all $1\leq i\leq k$.
\end{claim}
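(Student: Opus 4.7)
The plan is to prove by induction on $k$ the following strengthened statement: for every $k\in \N$ and every rational $r\in (0,1]$, the set of nondecreasing tuples $(x_1,\dots,x_k)\in \N^k$ with $\sum_{i=1}^k \frac{1}{x_i}=r$ is finite. The claim then follows by specialising to $r=1$ and setting $N_k$ to be the maximum of $x_k$ over this (nonempty but) finite set. Since any unordered tuple can be sorted without altering the multiset of denominators, the bound $N_k$ applies to every $x_i$ in every (unordered) solution to $\sum_{i=1}^k \frac{1}{x_i}=1$.

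The base case $k=1$ is immediate, as there is at most one candidate, namely $x_1=1/r$. For the inductive step, fix $k\ge 2$ and assume the statement holds for $k-1$ at every rational target in $(0,1]$. Given a nondecreasing solution, pigeonhole gives $\frac{k}{x_1}\ge \sum_{i=1}^k \frac{1}{x_i}=r$, hence $x_1\le k/r$; thus $x_1$ takes only finitely many positive integer values. For each admissible $x_1$, the residual $r':=r-\frac{1}{x_1}$ lies in $(0,1)$ (if $r'\le 0$ then no valid completion exists), and the tail $(x_2,\dots,x_k)$ is a nondecreasing $(k-1)$-tuple summing to $r'$. The induction hypothesis applied at target $r'$ produces finitely many such tails, and taking a union over the finitely many values of $x_1$ yields finiteness at level $k$.

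The point worth flagging, though not really an obstacle, is the need to strengthen the statement to an arbitrary rational $r\in (0,1]$: after stripping one reciprocal the residual sum is no longer $1$, so a naive induction on $k$ with $r$ fixed to $1$ does not close. Once the statement is formulated uniformly in $r$, the induction runs cleanly and simultaneously bounds every denominator in every representation of $1$ by $k$ unit fractions, giving the constant $N_k$ required by the lemma.
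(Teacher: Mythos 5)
Your proof is correct and follows essentially the same route as the paper: both generalise the target sum to an arbitrary parameter (the paper uses $c\in\R^+$, you use $r\in(0,1]$), bound the smallest denominator by $\lfloor k/c\rfloor$ via pigeonhole, and recurse on the residual sum. The only cosmetic difference is that the paper tracks an explicit recursive bound $N_k(c)\le\max_{i\le\lfloor k/c\rfloor}\max\bigl(i,\,N_{k-1}(c-\tfrac1i)\bigr)$ while you phrase the conclusion as finiteness of the solution set and then take a maximum.
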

    
\begin{proof} We prove by induction on $k$ that for every $c\in \mathbb R^+$ there is a number $N_k(c)$ such that if we have $\sum_{i=1}^k\frac{1}{x_i}=c$, then $x_i\leq N_k(c)$ for all $1\leq i\leq k$.
For $k=1$ setting $N_1(c)=\lfloor \frac 1c\rfloor$ is a good choice.

For $k>1$ notice that the \emph{smallest} number whose reciprocal is in the sum is at most $\lfloor \frac kc\rfloor$. Thus we obtain $N_k(c)\le \max_{i=1}^{\lfloor \frac kc\rfloor}\left(\max(i;N_{k-1}(c-\frac 1i)\right)$.
\end{proof}
This finishes the proof of Lemma~\ref{lemZ}, and thus also of Theorem \ref{iQ}.
\end{proof}

It would be interesting to find a complete characterisation of those pairs $(S,s_0)$ for which Theorem \ref{iQ} holds.

\begin{question}
For which $(S,s_0)$ is it true that every finite colouring of $\mathbb{Q}$ with more than one colour contains an AM positive homothet of $(S,s_0)$?
\end{question}

\section{Bold bouquets and the chromatic number of the plane}\label{4}


Suppose that for a graph $G=(V,E)$ with a given origin (distinguished vertex) $v_0\in V$, we have a colouring $\varphi$ where $\varphi:V\setminus\{v_0\} \to \{1,\dots,k\}$ and $\varphi(v_0)\in \binom{\{1,\dots,k\}}{2}$, i.e., each vertex apart from $v_0$ gets one color from the set $\{1,\dots,k\}$, while $v_0$ get two colors from \{1,\dots,k\}.
We say that $\varphi$ is a proper $k$-colouring with \emph{bichromatic origin} $v_0$, if $(v,w)\in E$ implies $\varphi(v)\cap \varphi(w)=\emptyset$. There are unit-distance graphs with not too many vertices that do not have a $4$-colouring with a certain bichromatic origin. Figure~\ref{abra4} shows such an example, the $34$-vertex graph $G_{34}$, posted by Hubai \cite{tamas} in Polymath16. Finding such graphs has been motivated by an approach to find a human-verifiable proof of $\chi(\mathbb{R}^5)\geq 5$, proposed by the Pálvölgyi \cite{domotor} in Polymath16.

$G_{34}$ is the first example  found whose chromatic number can be verified quickly without relying on a computer. To see this, note that the vertices connected to the central vertex have to be coloured with two colours, and they can be decomposed into three $6$-cycles. Using this observation and the symmetries of the graph, we obtain that there are only two essentially different ways to colour the neighbourhood of the central vertex. In both cases, for the rest of the vertices a systematic back-tracking strategy shows in a few steps that there is no proper colouring with four colours.

\begin{figure}[h]
\begin{center}
\begin{center}
\begin{tikzpicture}[every node/.style={inner sep=0pt, minimum size=2.5pt, circle, fill=blue!70!black, draw=black}, line width=0.15pt, draw opacity=.5, scale=3]
\coordinate (P1) at (0.000000, 0.000000);
\coordinate (P2) at (1.000000, 0.000000);
\coordinate (P3) at (0.500000, 0.866025);
\coordinate (P4) at (-0.500000, 0.866025);
\coordinate (P5) at (-1.000000, 0.000000);
\coordinate (P6) at (-0.500000, -0.866025);
\coordinate (P7) at (0.500000, -0.866025);
\coordinate (P8) at (0.728714, 0.684819);
\coordinate (P9) at (0.228714, 0.396143);
\coordinate (P10) at (0.228714, -0.973494);
\coordinate (P11) at (0.271286, 0.684819);
\coordinate (P12) at (0.228714, 0.973494);
\coordinate (P13) at (0.000000, 0.577350);
\coordinate (P14) at (0.228714, -0.396143);
\coordinate (P15) at (0.457427, -0.577350);
\coordinate (P16) at (0.728714, -0.684819);
\coordinate (P17) at (-0.228714, 0.973494);
\coordinate (P18) at (-0.271286, 0.684819);
\coordinate (P19) at (-0.228714, 0.396143);
\coordinate (P20) at (0.500000, -0.288675);
\coordinate (P21) at (0.957427, -0.288675);
\coordinate (P22) at (0.728714, -0.107468);
\coordinate (P23) at (-0.728714, 0.684819);
\coordinate (P24) at (0.457427, -0.000000);
\coordinate (P25) at (0.957427, 0.288675);
\coordinate (P26) at (-0.957427, 0.288675);
\coordinate (P27) at (-0.457427, 0.000000);
\coordinate (P28) at (-0.728714, -0.107468);
\coordinate (P29) at (-0.957427, -0.288675);
\coordinate (P30) at (-0.500000, -0.288675);
\coordinate (P31) at (-0.728714, -0.684819);
\coordinate (P32) at (-0.457427, -0.577350);
\coordinate (P33) at (-0.228714, -0.396143);
\coordinate (P34) at (-0.228714, -0.973494);
\draw (P1) -- (P2);
\draw (P1) -- (P3);
\draw (P1) -- (P4);
\draw (P1) -- (P5);
\draw (P1) -- (P6);
\draw (P1) -- (P7);
\draw (P1) -- (P8);
\draw (P1) -- (P10);
\draw (P1) -- (P12);
\draw (P1) -- (P16);
\draw (P1) -- (P17);
\draw (P1) -- (P21);
\draw (P1) -- (P23);
\draw (P1) -- (P25);
\draw (P1) -- (P26);
\draw (P1) -- (P29);
\draw (P1) -- (P31);
\draw (P1) -- (P34);
\draw (P2) -- (P3);
\draw (P2) -- (P7);
\draw (P2) -- (P11);
\draw (P3) -- (P4);
\draw (P3) -- (P22);
\draw (P4) -- (P5);
\draw (P4) -- (P28);
\draw (P5) -- (P6);
\draw (P5) -- (P18);
\draw (P6) -- (P7);
\draw (P6) -- (P15);
\draw (P7) -- (P32);
\draw (P8) -- (P17);
\draw (P8) -- (P18);
\draw (P8) -- (P19);
\draw (P8) -- (P20);
\draw (P8) -- (P21);
\draw (P9) -- (P15);
\draw (P9) -- (P21);
\draw (P9) -- (P23);
\draw (P9) -- (P30);
\draw (P10) -- (P21);
\draw (P10) -- (P22);
\draw (P10) -- (P24);
\draw (P10) -- (P30);
\draw (P10) -- (P31);
\draw (P11) -- (P20);
\draw (P11) -- (P23);
\draw (P11) -- (P27);
\draw (P12) -- (P23);
\draw (P12) -- (P24);
\draw (P12) -- (P25);
\draw (P13) -- (P14);
\draw (P13) -- (P20);
\draw (P13) -- (P22);
\draw (P13) -- (P25);
\draw (P13) -- (P26);
\draw (P13) -- (P28);
\draw (P13) -- (P30);
\draw (P13) -- (P33);
\draw (P14) -- (P25);
\draw (P14) -- (P28);
\draw (P14) -- (P31);
\draw (P15) -- (P25);
\draw (P15) -- (P30);
\draw (P16) -- (P25);
\draw (P16) -- (P33);
\draw (P16) -- (P34);
\draw (P17) -- (P26);
\draw (P17) -- (P27);
\draw (P18) -- (P24);
\draw (P18) -- (P30);
\draw (P19) -- (P20);
\draw (P19) -- (P29);
\draw (P19) -- (P32);
\draw (P20) -- (P27);
\draw (P20) -- (P30);
\draw (P20) -- (P32);
\draw (P20) -- (P34);
\draw (P22) -- (P33);
\draw (P23) -- (P29);
\draw (P23) -- (P30);
\draw (P24) -- (P30);
\draw (P26) -- (P31);
\draw (P26) -- (P32);
\draw (P26) -- (P33);
\draw (P27) -- (P34);
\draw (P28) -- (P34);
\draw (P29) -- (P34);
\foreach \i in {1,...,34} \node (N\i) at (P\i) {};
\end{tikzpicture}
\end{center}
\end{center}
\caption{\small{A $34$ vertex graph without a $4$-colouring if the origin is bichromatic.}}\label{abra4}
\end{figure}
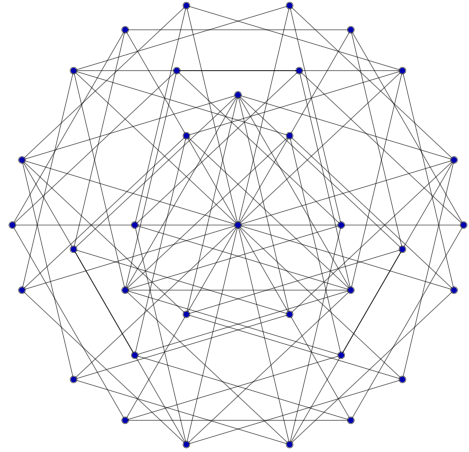

Theorem~\ref{human} with $G=G_{34}$ shows that a human-verifiable proof of Conjecture~\ref{introbouquet} for $k=4$ would provide a human-verifiable proof of $\chi(\R^2)\geq 5$. Note that $G_{34}$ was found by a computer search, and for finding other similar graphs one might rely on a computer program. Thus, the approach we propose, is human-verifiable, however it might be computer-assisted.

For a graph $G$ with origin $v_0$ let $\{C_1,\dots,C_n\}$ be the set of unit circles whose centres are the neighbours of $v_0$, and let $C(G,v_0)=C_1\cup\dots\cup C_n$ be the bouquet through $v_0$.

\begin{theorem}\label{human} If there is a unit-distance graph $G=(V,E)$ with $v_0\in V$ which does not have a proper $k$-colouring with bichromatic origin $v_0$, and Conjecture~\ref{introbouquet} is true for $C(G,v_0)$, then $\chi(\R^2)\geq k+1$.
\end{theorem}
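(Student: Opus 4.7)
The plan is to argue by contrapositive. Assume $\chi(\R^2)\leq k$ and fix a proper $k$-colouring $\psi:\R^2\to[k]$; from this, together with a smiling congruent copy of $C(G,v_0)$ supplied by Conjecture~\ref{introbouquet}, I will manufacture a proper $k$-colouring of $G$ with bichromatic origin $v_0$, contradicting the hypothesis on $G$.

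First I would check that $C(G,v_0)$ really is a bouquet through $v_0$: by construction each $C_i$ is a unit circle centred at a neighbour $u_i$ of $v_0$, and $v_0\in C_i$ because $\|v_0-u_i\|=1$. The $k=1$ case of the theorem is vacuous, so I may assume $k\geq 2$ and apply Conjecture~\ref{introbouquet} to $\psi$. This yields an isometry $\phi$ of the plane and a distinguished colour---call it blue---such that every circle $\phi(C_i)$ contains a blue point while $\psi(O)\neq\text{blue}$, where $O=\phi(v_0)$.

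Now I would define $\varphi$ on the vertices of $G$ by $\varphi(v_0)=\{\psi(O),\text{blue}\}$ and $\varphi(v)=\{\psi(\phi(v))\}$ for $v\neq v_0$. The set $\varphi(v_0)$ has size $2$ since $\psi(O)\neq\text{blue}$, so $v_0$ is bichromatic. For any edge of $G$ not incident to $v_0$ the fact that $\phi$ is an isometry together with the propriety of $\psi$ gives disjoint singleton colours. The only delicate verification is for an edge $(v_0,u_i)$, where I need $\psi(\phi(u_i))\notin\{\psi(O),\text{blue}\}$: the first exclusion follows from $\|\phi(u_i)-O\|=1$, and the second because a blue point $b_i\in\phi(C_i)$ satisfies $\|b_i-\phi(u_i)\|=1$, forcing $\psi(\phi(u_i))\neq\psi(b_i)=\text{blue}$. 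This exhibits a proper $k$-colouring of $G$ with bichromatic origin $v_0$, the desired contradiction. No serious technical obstacle arises; the argument is essentially a dictionary translating ``smiling bouquet'' on the plane side into ``bichromatic origin'' on the graph side, with the circles of $C(G,v_0)$ encoding precisely the constraints that forbid each neighbour of $v_0$ from taking the colour blue.
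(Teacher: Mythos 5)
Your argument is the same as the paper's: assume a proper $k$-colouring of the plane, apply Conjecture~\ref{introbouquet} to the bouquet $C(G,v_0)$ to obtain a smiling congruent copy, and transport the induced colours back to $G$ to produce a proper $k$-colouring with bichromatic origin $v_0$, contradicting the hypothesis. The verification of propriety (in particular, that each neighbour $u_i$ avoids both $\psi(O)$ and the distinguished colour, the latter via the blue point on $\phi(C_i)$) matches the paper's, and your explicit remark that a proper colouring necessarily uses at least two colours is a harmless, correct refinement that the paper leaves implicit.
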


\begin{proof}[Proof of Theorem~\ref{human}] Assume for a contradiction that there is a proper $k$-colouring $\varphi$ of the plane. Using $\varphi$ we construct a proper $k$-colouring of $G$ with bichromatic origin $v_0\in V$.

Let $v_1,\dots,v_n$ be the neighbours of the origin $v_0$, and $C_j$ be the unit circle centred at $v_j$. Then $C=C_1\cup\dots\cup C_n$ is a bouquet through $v_0$.
If Conjecture~\ref{introbouquet} is true for $\varphi$, then there is a bold congruent copy $C'=C_1'\cup\dots C_n'$ of $C$ through $v_0'$. That is, there are points $p_1\in C_1',\dots,p_n\in C_n'$ with $\ell=\varphi(p_1)=\dots=\varphi(p_n)\neq \varphi(v_0')$.

For $i\in [n]$ let $v'_i$ be the centre of $C'_i$. We define a colouring $\varphi'$ of $G$ as $\varphi'(v_0)=\{\varphi(v_0'),\ell\}$ and $\varphi'(v_i)=\varphi(v_i')$ for 
$v\in V\setminus\{v_0\}$.
We claim that $\varphi'$ is a proper $k$-colouring of $G$ with a bichromatic origin $v_0$, contradicting our assumption.

Indeed, if $v_i\ne v_0\ne v_j$ then for $(v_i,v_j)\in E$ we have $\varphi'(v_i)\ne \varphi'(v_j)$ because $\varphi(v_i')\ne \varphi(v_j')$.
For $(v_0,v_i)\in E$, we have $\varphi'(v_i)\ne \varphi(v_0)$ because $\varphi(v_i')\ne \varphi(v_0')$, and $\varphi'(v_i)\ne \ell$ because $\varphi(v_i')\ne \ell$ since $\|v_i'-p_i\|=1$.
This finishes the proof of Theorem \ref{human}.
\end{proof}

\subsection{Bold pencils}\label{41}

In this section we prove Theorem \ref{intropencil}. The proof was originally an answer by the first author to a MathOverflow question of the third author \cite{FN}. We start with the following simple claim.

\begin{claim}\label{gyuru} For every pencil $L$ through $O$ there is an $\varepsilon> 0$ for which the following is true. For any circle $C$ of radius $R$ if a point $p$ is at distance at most $\varepsilon R$ from $C$, then there is a congruent copy $L'$ of $L$ through $p$ such that every line of $L'$ intersects $C$.
\end{claim}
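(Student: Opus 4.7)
The plan is to exploit scale invariance and reduce to a short pigeonhole argument on the circle of directions through $p$. By rescaling, we may assume $R=1$, so $C$ is the unit circle centred at some point $O_C$; we will produce an $\varepsilon=\varepsilon(L)$ depending only on the number $n$ of lines in $L$.

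If $p$ lies in the closed disk bounded by $C$, every line through $p$ meets $C$, so any congruent copy of $L$ with vertex at $p$ works. Otherwise $p$ is exterior, at distance $d \in (1, 1+\varepsilon]$ from $O_C$. A line through $p$ meets $C$ iff its perpendicular distance from $O_C$ is at most $1$, i.e.\ iff its direction $\theta$ (measured from the line $pO_C$) satisfies $d|\sin\theta|\leq 1$. Viewed in the projective circle of directions $\mathbb{R}/\pi\mathbb{Z}$, the set of \emph{good} directions is thus a closed arc of length $\theta(d) = 2\arcsin(1/d)$, and the complementary \emph{bad} arc has length at most $\pi - 2\arcsin\bigl(1/(1+\varepsilon)\bigr)$, which tends to $0$ as $\varepsilon \to 0$.

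Now parametrise the congruent copies of $L$ through $p$ by a rotation angle $\beta \in \mathbb{R}/\pi\mathbb{Z}$: writing $\alpha_1,\dots,\alpha_n$ for the directions of the lines $L_1,\dots,L_n$, the rotated copy has line directions $\alpha_1+\beta,\dots,\alpha_n+\beta$. The rotation $\beta$ fails for the $i$-th line exactly when $\alpha_i+\beta$ lies in the bad arc, which cuts out an arc of $\beta$'s of the same length. Choosing $\varepsilon$ small enough that
\[
n\bigl(\pi - 2\arcsin(1/(1+\varepsilon))\bigr) < \pi
\]
ensures the union of the $n$ bad sets covers strictly less than the whole circle $\mathbb{R}/\pi\mathbb{Z}$, so a good $\beta$ exists and yields the required congruent copy $L'$.

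There is no substantial obstacle here: the only work is verifying the quantitative bound, which is immediate since each bad arc shrinks continuously to zero with $\varepsilon$, while the total allowed measure $\pi$ is fixed. The dependence of $\varepsilon$ on $L$ enters only through the number $n$ of lines.
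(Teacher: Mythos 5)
Your proof is correct. Both your argument and the paper's start the same way: rescale to $R=1$, dispose of the case where $p$ lies in the closed disc, and observe that for an exterior $p$ at distance $d>1$ from the centre, the directions of lines through $p$ that meet $C$ form an arc of $\mathbb{R}/\pi\mathbb{Z}$ of length $2\arcsin(1/d)$, which tends to $\pi$ as $d\to 1$. Where you differ is in the concluding step. You run a union bound over rotations $\beta$: each of the $n$ lines of $L$ rules out a set of $\beta$'s of measure $\pi-2\arcsin(1/d)$, and you choose $\varepsilon$ so that $n\bigl(\pi-2\arcsin(1/(1+\varepsilon))\bigr)<\pi$. The paper instead exploits that the good directions form a single \emph{connected} arc: letting $\alpha<\pi$ be the length of the smallest arc of $\mathbb{R}/\pi\mathbb{Z}$ containing the $n$ directions of $L$, it chooses $\varepsilon$ so that $2\arcsin(1/(1+\varepsilon))>\alpha$, and then simply rotates the span of $L$ into the good arc. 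The paper's version is a bit more direct and gives a sharper $\varepsilon$ (depending on the actual angular spread of $L$, not merely on $n$); your union bound depends only on $n$ but is more robust, as it would go through even if the set of good directions were not an interval. In the extremal case of $n$ equally spaced lines the two bounds coincide.
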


\begin{proof} It is sufficient to prove the following. If $C$ is a unit circle and $p$ is sufficiently close to $C$, then there is a congruent copy $L'$ of $L$ through $p$ such that every line of $L'$ intersects $C$.

Note that if $p$ is contained in the disc bounded by $C$, clearly every line of every congruent copy $L'$ of $L$ through $p$ intersects $C$. Thus we may assume that $p$ is outside the disc. 

Let $0<\alpha<\pi$ be the largest angle spanned by lines in $L$. If $p$ is sufficiently close to $C$, then the angle spanned by the tangent lines of $C$ through $p$ is larger than $\alpha$. Thus, any congruent copy $L'$ of $L$ through $p$ can be rotated around $p$ so that every line of the pencil intersects $C$.
\end{proof}

\begin{proof}[Proof of Theorem~\ref{intropencil}]
Assume for contradiction that $\varphi$ is a colouring using at least two colours, but there is a pencil $L$ such that there is no congruent bold copy of $L$.

First we obtain a contradiction assuming that there is a monochromatic, say red, circle $C$ of radius $r$. We claim that then every point $p$ inside the disc bounded by $C$ is red. Indeed, translating $L$ to a copy $L'$ through $p$, each line $L_i'$ will intersect $C$, and so have a red point. Thus $p$ must be red.

A similar argument together with Claim~\ref{gyuru} shows that if there is a non-red point at distance at most $\varepsilon r$ from $C$, we would find a congruent bold copy of $L$ through $p$. Thus there is a circle $C'$ of radius $(1+\varepsilon)r$ concentric with $C$, such that every point of the disc bounded by $C'$ is red. Repeating this argument, we obtain that every point of $\mathbb{R}^2$ is red contradicting the assumption that $\varphi$ uses at least $2$ colours. 

To complete the proof, we show that there exists a monochromatic circle. For $1\leq i\leq n$ let $\alpha_i$ be the angle of $L_i$ and $L_{i+1}$. Fix a circle $C$, and let $a_1,\dots,a_n\in C$ be points such that if $c\in C\setminus \{a_1, \ldots, a_n\}$, then the angle of the lines connecting $c$ with $a_i$ and $c$ with $a_{i+1}$ is $\alpha_i$. By Gallai's theorem there is a monochromatic (say red) set $\{a'_1,\dots,a'_n\}$ similar to $\{a_1,\dots,a_n\}$. Let $C'$ be the circle that contains $\{a'_1,\dots,a'_n\}$. Then $C'$ is monochromatic. Indeed, if there is a point $p$ on $C'$ for which $\varphi(p)$ is not red, then by choosing $L_j'$ to be the line connecting $p$ with $a'_j$, we obtain $L'=L_1'\cup\dots\cup L_n'$, a bold congruent copy of $L$. 
\end{proof}

\subsection{Conjecture~\ref{introbouquet} for lattice-like bouquets}\label{42}

Using the ideas from the proof of Theorem \ref{grid}, we prove Conjecture \ref{introbouquet} for a broader family of bouquets.

\subsubsection{Lattices}

A \emph{lattice} $\L$ generated by linearly independent vectors $v_1,v_2\in \mathbb{R}^2$ is the set $\L=\L(v_1,v_2)=\setbuilder{n_1v_1+n_2v_2}{n_1,n_2\in \Z}$.
We call a lattice $\L$ \emph{rotatable} if for every $0\leq \alpha_1< \alpha_2\leq \pi$ there is an angle $\alpha_1<\alpha<\alpha_2$ and scaling factor $\lambda= \lambda(\alpha_2,\alpha_1)$ such that $\lambda\alpha(\L)\subset\L$, where $\alpha(L)$ is the rotated image of $\L$ by angle $\alpha$ around the origin.
For example, $\Z^2$, the triangular grid, and $\setbuilder{n_1(1,0)+n_2(0,\sqrt 2)}{n_1,n_2\in \Z}$ are rotatable, but $\L=\setbuilder{n_1(1,0)+n_2(0,\pi)}{n_1,n_2\in \Z}$ is not.\footnote{For another characterization of rotatable lattices, see \url{https://mathoverflow.net/a/319030/955}.} 

The rotatability of $\L$ allows us to extend Lemma \ref{lemextension} from $\mathbb{Z}^2$ to $\L$. This leads to an extension of Theorem~\ref{grid} to rotatable lattices.

\begin{theorem}\label{lattice}Let $\L$ be a rotatable lattice, $S\subseteq \L$ be finite and $s_0$ be an extreme point of $S$. Then for every $k\in N$ there exists a constant $\Lambda=\Lambda(\L,S,k)$ such that the following is true. In every $k$-colouring of $\L$ there is either an AM similar copy of $(S,s_0)$ with a positive scaling factor bounded by $\Lambda$, 
or a monochromatic positive homothetic copy of $\L$ with an integer scaling factor $1\leq \lambda \leq \Lambda$.
\end{theorem}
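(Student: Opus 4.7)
The plan is to follow the proof of Theorem~\ref{grid}, with $\Z^d$ replaced by the rotatable lattice $\L$. The two ingredients to generalise are the extension lemma (Lemma~\ref{lemextension}), whose $\Z^d$ proof went through rational rotations, and Gallai's theorem (Theorem~\ref{gallaiZ}); the former is where rotatability enters decisively.

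The main step is the following analogue of Lemma~\ref{lemextension}: there exist constants $R>0$ and $\delta_0>0$, depending only on $\L$ and $S$, such that for every ball $D\subseteq\R^2$ of radius at least $R$ and every $p\in\L$ at distance at most $1$ from $D$, there is a similar copy $(S',s_0')$ of $(S,s_0)$ contained in $\L$ with $s_0'=p$, $S'\setminus\{s_0'\}\subseteq D$, and similarity ratio at most $\delta_0$. Because $s_0$ is extreme in $S$, fix a unit direction $u$ with $\langle s-s_0,u\rangle>0$ for every $s\neq s_0$, and choose $\varepsilon_0=\varepsilon_0(S)>0$ so that any rotation within $\varepsilon_0$ of the rotation sending $u$ to the unit vector from $p$ toward the centre of $D$ sends every $s-s_0$ into the half-plane through $p$ facing $D$. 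Applying rotatability to an interval of width $\varepsilon_0$ around this target angle yields a rotation $\alpha$ in that interval together with a scaling $\lambda_0$ satisfying $\lambda_0\,\alpha(\L)\subseteq\L$. Since $p\in\L$ and $S-s_0\subseteq\L$, the set $S':=p+\lambda_0\,\alpha(S-s_0)$ lies in $\L$, and for $R$ large enough compared to $\lambda_0\max_{s}\|s-s_0\|$ the points of $S'\setminus\{p\}$ lie in $D$. A compactness argument on $[0,\pi]$ shows that, as the interval varies, $\lambda_0$ can be chosen bounded by a uniform constant $\delta_0=\delta_0(\L,S)$.

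With this extension lemma in hand, the argument of Theorem~\ref{grid} transfers. Via any linear isomorphism $\L\cong\Z^2$, Theorem~\ref{gallaiZ} yields a constant $\lambda(\L,H,k)$ such that every $k$-colouring of $\L$ contains a monochromatic positive homothet of a fixed finite $H\subseteq\L$ with integer scaling factor at most $\lambda(\L,H,k)$. Taking $H$ to be the intersection of $\L$ with a ball of radius $R$ from the extension lemma, we find a blue homothet $H_0$ which we may assume to be $\lambda\L$ intersected with a ball of radius $\lambda R$ for some integer $1\leq\lambda\leq\lambda(\L,H,k)$. If any point of $\lambda\L$ at distance at most $\lambda$ from $H_0$ is not blue, the extension lemma applied to $\lambda\L$ and the ball of radius $\lambda R$ produces an AM similar copy of $(S,s_0)$ with similarity ratio at most $\lambda\delta_0$; otherwise every such point is blue and we enlarge $H_0$. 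Iterating, we either obtain an AM similar copy or conclude that all of $\lambda\L$ is blue, i.e.\ a monochromatic positive homothet of $\L$ with integer scaling $\lambda$. Setting $\Lambda:=\lambda(\L,H,k)\cdot\max\{1,\delta_0\}$ gives the required uniform bound.

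The main obstacle is the extension lemma: rotatability must be used not merely to produce some rotated-scaled copy of $\L$ inside $\L$, but one whose rotation angle is close to a prescribed angle forced by the position of $p$ relative to $D$, while the associated scaling $\lambda_0$ must remain bounded as $p$ and $D$ vary. This uniformity comes from the compactness argument on $[0,\pi]$; once it is in place, the iterative argument of Theorem~\ref{grid} carries over essentially verbatim.
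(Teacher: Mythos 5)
Your proposal follows the same route the paper intends: extend Lemma~\ref{lemextension} to $\L$ via rotatability, then run the iterative argument of Theorem~\ref{grid} verbatim. The one detail the paper leaves implicit and you correctly supply is the uniform bound on the scaling $\lambda_0$ via a finite subcover of the angle space, which is exactly what makes the extension lemma quantitative.

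One small calibration point worth tightening. In your extension lemma you take $p$ at distance at most $1$ from $D$, and you argue that the rotated copy lands in the forward half-plane. Since rotatability forces $\lambda_0\ge 1$ (a sublattice cannot be denser than $\L$), the nearest point of $S'\setminus\{s_0'\}$ to $p$ lies at distance $\ge \lambda_0\min_{s\ne s_0}\|s-s_0\|$ inside a cone of some half-angle $\psi<\pi/2$ about the direction towards the centre of $D$; as $R\to\infty$ the point $q=p+tw$ with $\angle(w,v)\le\psi$ enters $D$ only once $t$ exceeds roughly $d/\cos\psi$, where $d\le 1$ is the distance from $p$ to $D$. Thus ``half-plane'' is not enough (one needs the strict cone, available since $S$ is finite), and ``distance at most $1$'' is only safe if $\lambda_0\min_{s\ne s_0}\|s-s_0\|\cos\psi>1$, which is not automatic for a general rotatable $\L$. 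The clean fix is to state the extension lemma with ``distance at most $\eps$'' for a suitably small $\eps=\eps(\L,S)>0$; the iterative growth in the Theorem~\ref{grid} argument works with any positive increment, so the rest of your proof carries over unchanged.
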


The proof of extending Lemma \ref{lemextension} to rotatable lattices is analogous to the original one, so is the proof of Theorem~\ref{lattice} to the proof of Theorem~\ref{grid}. Therefore, we omit the details.

\subsubsection{Lattice-like bouquets}

Let $C=C_1\cup\dots\cup C_n$ be a bouquet through $O$, and for $i\in [n]$ let $O_i$ be the centre of $C_i$. We call $C$ \emph{lattice-like} if $O$ is an extreme point of $\{O,O_1,\dots,O_n\}$ and there is a rotatable lattice $\L$ such that $\{O,O_1,\dots,O_n\}\subseteq \L$. Similarly, we call a unit-distance graph $G=(V,E)$ with an origin $v_0\in V$ \emph{lattice-like} if there is a rotatable lattice $\L$ such that $v_0$ and its neighbours are contained in $\L$, and $v_0$ is not in the convex hull of its neighbours.

Since $\mathbb{Z}^2$ is a rotatable lattice, Theorem \ref{introracbouq} is a direct corollary of the result below.

\begin{theorem}\label{latticeboq} If $C$ is a lattice-like bouquet, then every proper $k$-colouring of $\mathbb{R}^2$ contains a bold congruent copy of $C$.
\end{theorem}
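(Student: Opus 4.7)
The plan is to apply Theorem~\ref{lattice} to a finite auxiliary colouring of $\L$ that records, at each lattice point $p$, both the colour $\varphi(p)$ and the \emph{palette} $B(p) := \{c \in [k] : c \text{ occurs on the unit circle around } p\}$. Set $\psi(p) := (\varphi(p), B(p)) \in [k] \times 2^{[k]}$, so $\psi$ uses at most $k\cdot 2^k$ colours. Two consequences of $\varphi$ being proper drive the argument: first, $\varphi(p) \notin B(p)$; second, $|B(p)|\geq 2$, because any unit circle contains points at mutual distance $1$ (for instance, pairs at angular separation $\pi/3$), and a proper colouring cannot restrict to a monochromatic colouring on such a pair.

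First, I would apply Theorem~\ref{lattice} to $\psi$ with the set $S = \{O, O_1, \ldots, O_n\}$ and $s_0 = O$. The hypotheses are met: $S \subseteq \L$, $\L$ is rotatable, and $O$ is extreme in $S$ by the lattice-like assumption. The output is either (a) an AM similar copy $\{O', O'_1, \ldots, O'_n\} \subseteq \L$ of $(S, O)$ under $\psi$, with $\psi(O'_i) = (c, B)$ common for all $i$ and $\psi(O') \neq (c, B)$; or (b) a positive homothetic copy $\lambda \L + t$ of $\L$ of integer scale at most $\Lambda$, on which $\psi$ is constant equal to some $(c_0, B_0)$.

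In either outcome, the smiling congruent copy of $C$ is produced by selecting a colour $b$ from the recorded palette. Suppose we are in case (a) and the scale of the similar copy equals $1$; then $\{O', O'_1, \ldots, O'_n\}$ is a congruent copy of $S$, and since $|B|\geq 2$ the set $B \setminus \{\varphi(O')\}$ contains at least one element $b$. Because $b \in B = B(O'_i)$, the unit circle around each $O'_i$ carries a point of colour $b$; and $b \neq \varphi(O')$. Therefore the bouquet through $O'$ with centres $O'_1, \ldots, O'_n$ is a smiling congruent copy of $C$. In case (b), the palette $B_0$ is constant on the monochromatic homothet, so one uses the rotatability of $\L$ to place a congruent copy of $C$ whose centres land at lattice points of this homothet, and the same colour-selection argument produces a smiling bouquet.

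The principal obstacle is that Theorem~\ref{lattice} delivers an AM similar copy only up to a bounded positive scaling factor, whereas a congruent copy of $C$ demands scale exactly $1$. This is overcome by revisiting the proof of Theorem~\ref{lattice}—which proceeds through a Gallai-type monochromatic lattice ball followed by the extension Lemma~\ref{lemextension} extended to rotatable lattices—and calibrating the ball radius and the lemma's scaling constant $\delta$ so that the AM copy is produced at scale $1$. The rotatability of $\L$ together with the extreme-point hypothesis on $O$ provides the necessary flexibility; this is the technical heart of the adaptation from Theorem~\ref{grid} to the bouquet setting.
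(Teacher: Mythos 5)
Your strategy of augmenting $\varphi$ with a ``palette'' $B(p)$ is appealing, but it hinges on an assumption that cannot be enforced: that the AM similar copy produced by Theorem~\ref{lattice} has scaling factor exactly $1$. Theorem~\ref{lattice} (via Gallai's theorem in the proof of Theorem~\ref{grid}) only guarantees a scaling factor that is \emph{bounded above} by $\Lambda$; the actual value is determined by the colouring and cannot be prescribed. Your proposed fix --- ``calibrating the ball radius and the lemma's scaling constant so that the AM copy is produced at scale $1$'' --- does not stand: the lattice homothet coming out of Gallai's theorem has an a priori unknown integer scale $\lambda\in\{1,\dots,\Lambda\}$, and the AM copy inherits a scale proportional to $\lambda$, so no choice of a pre-scaling constant forces it to equal $1$ across all colourings. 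If the scale is $\mu\neq 1$, then the centres $O_i'$ are at distance $\mu$ (not $1$) from $O'$, so the unit circles around the $O_i'$ do not pass through $O'$ and the palette information $B(O_i')$ is about the wrong circles; the argument collapses.

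The paper avoids this difficulty in a way that eliminates the need for palettes entirely. It applies Theorem~\ref{lattice} directly to $\varphi$ restricted to $\delta\L$ with $\delta=1/\Lambda!$, so that the resulting AM similar copy $(\{r_0,r_1,\dots,r_n\},r_0)$ has scale $\mu\le\delta\Lambda\le 2$. The missing ingredient in your argument is Proposition~\ref{similar}: for any scale $0<\mu\le 2$ one can place a congruent copy of the bouquet $C$ through $r_0$ so that the $i$-th circle passes through $r_i$. Since $r_1,\dots,r_n$ are all one colour and $r_0$ is not that colour under $\varphi$ itself, the non-apex points of the AM set serve directly as the ``blue witnesses'' on the circles; no auxiliary palette colouring is required, and no constraint of scale equal to $1$ arises. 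The choice $\delta=1/\Lambda!$ also kills the second alternative: a monochromatic copy of $\lambda\delta\L=\frac{1}{N_\lambda}\L\supseteq\L$ would contain the two lattice points $O,O_1$ at distance $1$, contradicting properness. (Your handling of case (b) via lattice rotatability has an analogous issue --- $\lambda\L+t$ for $\lambda>1$ need not contain a congruent copy of $\{O,O_1,\dots,O_n\}$ --- though it can be repaired with the same pre-scaling; the essential gap is in case (a).)
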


This implies the following, similarly as Conjecture~\ref{introbouquet} implied Theorem~\ref{human}.

\begin{theorem}\label{implication} If there exists a lattice-like unit-distance graph $G=(V,E)$ with an origin $v_0$ that does not admit a proper $k$-colouring with bichromatic origin $v_0$, then $\chi(\R^2)\geq k+1$.
\end{theorem}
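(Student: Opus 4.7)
The plan is to mimic the proof of Theorem~\ref{human} almost verbatim, with Conjecture~\ref{introbouquet} replaced by the now-established Theorem~\ref{latticeboq}. The key observation is that the \emph{lattice-like} hypothesis was defined precisely so that the associated bouquet $C(G,v_0)$ inherits the lattice-like property, allowing Theorem~\ref{latticeboq} to apply.

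First I would argue by contradiction: suppose $\chi(\R^2)\le k$ and fix a proper $k$-colouring $\varphi$ of $\R^2$. Let $v_1,\dots,v_n$ denote the neighbours of $v_0$ in $G$, let $C_i$ be the unit circle centred at $v_i$, and set $C(G,v_0)=C_1\cup\dots\cup C_n$; this is a bouquet through $v_0$. I need to check that $C(G,v_0)$ is a lattice-like bouquet. By the definition of a lattice-like unit-distance graph, there is a rotatable lattice $\L$ with $v_0,v_1,\dots,v_n\in \L$ and $v_0\notin\conv\{v_1,\dots,v_n\}$. The centres of the circles in $C(G,v_0)$ are exactly $v_1,\dots,v_n$, so $\{v_0,v_1,\dots,v_n\}\subseteq \L$ and $v_0$ is an extreme point of this set. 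Hence $C(G,v_0)$ satisfies the hypotheses of Theorem~\ref{latticeboq}.

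Next I would apply Theorem~\ref{latticeboq} to $\varphi$: it yields a smiling congruent copy $C'=C_1'\cup\dots\cup C_n'$ of $C(G,v_0)$ through some point $v_0'$, i.e.\ there exist points $p_i\in C_i'$ and a colour $\ell$ with $\ell=\varphi(p_1)=\dots=\varphi(p_n)\ne\varphi(v_0')$. Letting $v_i'$ be the centre of $C_i'$, I would define a colouring $\varphi'$ of $G$ by $\varphi'(v_0)=\{\varphi(v_0'),\ell\}$ and $\varphi'(v_i)=\{\varphi(v_i')\}$ for $i\ge 1$. Exactly as in the proof of Theorem~\ref{human}, properness on edges not incident to $v_0$ follows from properness of $\varphi$ on the congruent copy, while for an edge $(v_0,v_i)$ we use $\varphi(v_i')\ne \varphi(v_0')$ and $\varphi(v_i')\ne \ell$ (the latter because $\|v_i'-p_i\|=1$). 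This produces a proper $k$-colouring of $G$ with bichromatic origin $v_0$, contradicting the hypothesis on $G$, hence $\chi(\R^2)\ge k+1$.

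There is essentially no hard step here beyond the bookkeeping check that the graph-theoretic notion of lattice-like translates to the bouquet-theoretic one; the substantive content has already been absorbed into Theorem~\ref{latticeboq}. The one place to be careful is to ensure that in applying Theorem~\ref{latticeboq} we really get a smiling copy (and not merely an AM configuration on the centres): the theorem as stated gives exactly this, so no adjustment is needed. Thus the result follows as a direct corollary, parallel to how Theorem~\ref{human} follows from Conjecture~\ref{introbouquet}.
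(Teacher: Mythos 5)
Your proposal is correct and matches the paper's intended argument: the paper itself gives no separate proof, stating only that the result follows from Theorem~\ref{latticeboq} in the same way that Theorem~\ref{human} followed from Conjecture~\ref{introbouquet}. You have correctly filled in the only non-trivial bookkeeping point, namely that the definitions of ``lattice-like'' for a graph with origin and for a bouquet were set up precisely so that $C(G,v_0)$ inherits the property.
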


In the proof of Theorem \ref{latticeboq}, we need a simple geometric statement.

\begin{prop}\label{similar} Let $C=C_1\cup\dots\cup C_n$ be a bouquet through $O$, and let $\mathcal O=\{O_1,\dots,O_n\}$, where $O_j$ is the center of $C_j$. Then for every $0<\lambda\leq 2$ there are $n$ points $P_1,\dots,P_n$ such that $P_j\in C_j$ and $\{P_1,\dots,P_n\}$ is congruent to $\lambda \mathcal O$.
\end{prop}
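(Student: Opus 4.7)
The plan is to construct all $n$ points $P_j$ at once by rotating the scaled configuration $\lambda \mathcal O$ around the common point $O$ by a single, carefully chosen angle $\theta = \theta(\lambda)$.

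First I would set $v_j := O_j - O$, observing that $|v_j| = 1$ for every $j$ since $C_j$ is a unit circle through $O$. For a parameter $\theta$ to be fixed later, define
\[ P_j := O + \lambda R_\theta v_j, \]
where $R_\theta$ denotes rotation by $\theta$ about the origin. Because the assignment $\lambda O_j \mapsto P_j$ extends to a single Euclidean isometry of $\mathbb R^2$ (a rotation about $O$ composed with a translation), the resulting set $\{P_1,\dots,P_n\}$ is automatically congruent to $\lambda \mathcal O$, independently of the value of $\theta$.

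The remaining task is to choose $\theta$ so that $P_j \in C_j$ for every $j$, i.e.\ so that $|P_j - O_j| = 1$. A direct computation of $|(\lambda R_\theta - I) v_j|^2$, using $|v_j| = 1$, reduces this requirement to the single scalar equation $\lambda^2 - 2\lambda\cos\theta + 1 = 1$, i.e.\ $\cos\theta = \lambda/2$. The hypothesis $0 < \lambda \le 2$ is precisely what guarantees that $\theta := \arccos(\lambda/2)$ exists.

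The key conceptual point — and there is no genuine obstacle beyond this — is that because every $|v_j|$ is the same (namely $1$), the equation determining $\theta$ does not depend on $j$, so one angle works simultaneously for all $n$ circles. Geometrically, $\cos\theta = \lambda/2$ is just the law of cosines applied to the isosceles triangle $O O_j P_j$ with side lengths $1, 1, \lambda$, and the construction is essentially forced by the unit-radius hypothesis together with the bound $\lambda \le 2$.
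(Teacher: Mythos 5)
Your proof is correct and follows essentially the same approach as the paper: rotate a scaled copy of $\mathcal O$ about $O$. The paper first treats $\lambda=2$ via reflection in each $O_j$, then scales down and asserts the existence of a suitable rotation angle without computing it; you instead derive the angle directly as $\theta=\arccos(\lambda/2)$, making explicit why the same angle works for every $j$ (all $|v_j|=1$) and why the hypothesis $\lambda\le 2$ is exactly what is needed.
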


\begin{proof} 
For $\lambda=2$ let $P_j$ be the image of $O$ reflected in $O_j$. Then $P_j\in C_j$, and $\{P_1,\dots,P_n\}$ can be obtained by dilating $\mathcal O$ from $O$ with a factor of $2$. For $\lambda<2$, scale $\{P_1,\dots,P_n\}$ by $\frac{\lambda}{2}$ from $O$ obtaining $\{P'_1,\dots,P'_n\}$. Then there is an angle $\alpha$ such that rotating $\{P'_1,\dots,P'_n\}$ around $O$ by $\alpha$, the rotated image of each $P'_j$ is on $C_j$. 
\end{proof}

\begin{proof}[Proof of Theorem~\ref{latticeboq}] Let $C=C_1\cup\dots\cup C_n$ be the lattice-like bouquet through $O$, $O_i$ be the centre of $C_i$ for $i\in[k]$, and $\L$ be the rotatable lattice containing $S=\{O,O_1,\dots,O_n\}$. Consider a proper $k$-colouring $\varphi$ of $\mathbb{R}^2$ and let $\delta\in \mathbb{Q}$ to be chosen later.

By Theorem~\ref{lattice}, the colouring $\varphi$ either contains an AM similar copy of $\delta (S,s_0)$  with a positive scaling factor bounded by $\lambda(\L,S,k)$, or a monochromatic similar copy of $\delta\L$ with an integer scaling factor bounded by $\lambda(\L,S,k)$.

If the first case holds and $\delta$ is chosen so that $\delta \lambda(\L,S,k)\leq 2$, Proposition~\ref{similar} provides a bold congruent copy of $C$. Now assume for contradiction that the first case does not hold. Then there is a monochromatic similar copy $\L'$ of $\delta \L$ with an integer scaling factor $\lambda$ bounded by $\lambda(\L,S,k)$. However, if we choose $\delta=\frac 1{\lambda(\L,S,k)!}$, then for any $1\leq \lambda\leq \lambda(\L,S,k)$ we have $\delta\lambda=\frac{1}{N_{\lambda}}$ for some $N_{\lambda}\in \mathbb{N}$. But this would imply that there are two points in the infinite lattice $\lambda\delta \L$ at distance $1$, contradicting that $\varphi$ is a proper colouring $\mathbb{R}^2$.
\end{proof}

\section{Further problems and concluding remarks}\label{5}

Problems in the main focus of this paper are about finding AM sets \emph{similar} to a given one. However, it is also interesting to find AM sets \emph{congruent} to a given one. In this direction, Erd\H os, Graham, Montgomery, Rothschild, Spencer and Straus made the following conjecture.

\begin{conjecture}[Erd\H os et al.~\cite{EGMRSS3}]\label{erdosconj}
Let $s_0\in S\subset\R^2$, $|S|=3$.
There is a non-monochromatic colouring of $\R^2$ that contains no AM congruent copy of $(S,s_0)$ if and only if $S$ is collinear and $s_0$ is not an extreme point of $S$.
\end{conjecture}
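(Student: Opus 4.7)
My plan is to treat the two implications of the conjecture separately, since they are essentially disjoint in spirit: one is a construction, the other a Ramsey-style forcing statement.

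\textbf{``If'' direction (constructive).} Suppose $S\subset\R^2$ is collinear with $s_0$ not an extreme point. After an affine reduction we may take $S=\{a,s_0,b\}$ on a line with $s_0$ strictly between $a$ and $b$, so that $s_0=\mu a+(1-\mu)b$ for some $0<\mu<1$. Colour $\R^2$ with the half-plane colouring $\varphi(x,y)=\text{red}$ if $x\ge 0$, $\text{blue}$ otherwise. Any congruent copy $(S',s_0')$ of $(S,s_0)$ is still collinear, and the analogous convex-combination identity $s_0'=\mu a'+(1-\mu)b'$ holds. Thus the $x$-coordinate of $s_0'$ lies between those of $a'$ and $b'$; if both endpoints lie in the closed right half-plane, so does $s_0'$, and likewise for the left half-plane. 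Hence $S'\setminus\{s_0'\}$ monochromatic forces $S'$ monochromatic, and no AM copy exists. This is a non-monochromatic colouring, as required.

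\textbf{``Only if'' direction (the hard part).} We must show that if either (i) $S$ is a non-degenerate triangle, or (ii) $S$ is collinear with $s_0$ an extreme point (so after similarity $S=\{0,1,2\}$, $s_0\in\{0,2\}$), then every non-monochromatic finite colouring $\varphi$ of $\R^2$ contains an AM congruent copy. In case (i), the plan is to start from a known Euclidean-Ramsey result producing a monochromatic congruent copy of $S$ in $\varphi$ (for equilateral, right-angled, and various other specific triangles this is classical; e.g.\ via the regular hexagon argument for equilateral triangles), then perturb: move one vertex continuously across a colour boundary. Since the colouring uses at least two colours, a suitable continuous one-parameter family of congruent copies of $S$ will cross a colour change at exactly one vertex, yielding an AM copy. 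The implementation requires a compactness/pigeonhole argument to ensure that the vertex that changes colour is precisely the distinguished $s_0$, which may require averaging over rotations.

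In case (ii), with $S=\{0,1,2\}$ and $s_0=0$, the target is collinear equally spaced $P,Q,R$ with $|PQ|=|QR|=1$ and $\varphi(Q)=\varphi(R)\neq\varphi(P)$. Assume for contradiction no such AM copy exists. Then whenever a pair $Q,R$ at unit distance shares a colour $c$, the reflection $P=2Q-R$ also has colour $c$; iterating in both directions, the entire two-way infinite AP $\{Q+n(R-Q):n\in\Z\}$ is monochromatic in colour $c$. Since every equilateral unit triangle contains a monochromatic edge, such monochromatic unit pairs exist in every direction and through a dense set of locations, spawning monochromatic integer-spaced lines densely in every direction. The plan is then to use two such monochromatic lines — of the same or different colours — whose geometric interaction (angles, intersections) forces the existence of a unit pair bridging distinct colour classes in a way that contradicts the line-extension property, ultimately forcing $\varphi$ to be monochromatic.

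\textbf{Main obstacle.} The conjecture is open, and we expect case (ii) to be where the genuine difficulty lies: the conclusion ``no AM copy'' yields a strong rigidity statement (monochromatic unit pairs extend to monochromatic integer lines), but producing a contradiction from this rigidity, given only that $\varphi$ is non-monochromatic, appears to require a new combinatorial-geometric lemma controlling how families of monochromatic lines in different directions must interact in a $k$-colouring of $\R^2$. Case (i) depends in parts on unresolved Euclidean Ramsey questions (existence of monochromatic congruent copies of arbitrary triangles), and will likely need to be handled separately for different triangle shapes. Thus a complete proof probably decomposes into a constructive half, a Ramsey-theoretic half for triangles, and a delicate structural half for the collinear extreme case, the last of which is the principal conjectural content.
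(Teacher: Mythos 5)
You are attempting to prove a statement that the paper presents as Conjecture~\ref{erdosconj}, not a theorem: the paper contains no proof of it, and in fact in Section~\ref{5} it sketches a \emph{counterexample}, credited to MathOverflow user `fedja', which refutes the ``only if'' direction. The counterexample sits precisely in the collinear extreme-point case you identified as the principal difficulty. Take $S=\{0,1,s_0\}$ with $s_0\notin[0,1]$ transcendental, and let $\tau$ be a field automorphism of $\R$ over $\Q$ with $\tau(s_0)\in(0,1)$. Colouring $x\in\R$ red if $\tau(x)>0$ and blue otherwise (and colouring $(x,y)\in\R^2$ by the colour of $x$), any nondegenerate copy $\{a,a+b,a+bs_0\}$ maps under $\tau$ to a triple in which the image of $s_0$ lies strictly between the images of the other two, so $s_0$ is never the odd one out in a half-line colouring; hence no AM similar (a fortiori no AM congruent) copy of $(S,s_0)$ exists, even though $s_0$ is extreme. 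Your rigidity plan for case (ii) --- forcing monochromatic unit pairs to extend to monochromatic lines and then deriving a contradiction --- is therefore chasing a false statement.

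A related error explains why this is invisible in your writeup: in case (ii) you normalize ``after similarity'' to $S=\{0,1,2\}$, but a collinear triple with $s_0$ extreme is only determined up to similarity as $\{0,1,t\}$ for an arbitrary $t>1$; the free parameter $t$ is exactly what the counterexample exploits by choosing it transcendental, and your reduction silently discards all cases but the equally spaced one. Your ``if'' direction is fine and coincides with the paper's remark that a half-plane colouring rules out even AM \emph{similar} copies of such $(S,s_0)$. For case (i), your plan rests on monochromatic congruent copies of an arbitrary triangle being forced in every finite colouring of $\R^2$, which is itself a hard open problem in Euclidean Ramsey theory; the paper notes that~\cite{EGMRSS3} established the conjecture only for a few special triangle shapes, so that branch of your argument also does not close.
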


As noted in \cite{EGMRSS3}, the `if' part is easy; colour  $(x,y)\in\R^2$ red if $y>0$ and blue if $y\le 0$.
In fact, this colouring also avoids AM similar copies of such $S$.
Conjecture \ref{erdosconj} was proved in \cite{EGMRSS3} for the vertex set $S$ of a triangle with angles $120^\circ, 30^{\circ}$, and $30^{\circ}$ with any $s_0\in S$. It was also proved for any isosceles triangle in the case when $s_0$ is one of the vertices on the base, and for an infinite family of right-angled triangles.

Much later, the same question was asked independently in a more general form by the third author \cite{domotor2}.
In a comment to this question on the MathOverflow site, a counterexample (to both the MathOverflow question and Conjecture \ref{erdosconj}) was pointed out by user `fedja', which we sketch below.

\begin{counterexample}[`fedja' \cite{fedja}]
Let $S=\{0,1,s_0\}$ where $s_0\notin [0,1]$ is a transcendental number.
Then there is a field automorphism $\tau$ of $\mathbb C$ over $\Q$ such that $\tau(s_0)\in(0,1)$.
Pick an arbitrary halfplane $H\subset \mathbb C$, and colour $x$ red if $\tau(x)\in H$ and blue if $x\notin H$.
Suppose that there is an AM similar copy $\{a,a+b,a+bs_0\}$ of $(S,s_0)$.
Then these points are mapped by $\tau$ to $\tau(a)$, $\tau(a)+\tau(b)$ and $\tau(a)+\tau(b)\tau(s_0)$.
Since $\tau(s_0)\in(0,1)$, the point $\tau(a)+\tau(b)\tau(s_0)$ falls inside the planar segment $[\tau(a),\tau(a)+\tau(b)]$.
If $a$ and $a+bs_0$ have the same color, then either $\tau(a),\tau(a)+\tau(b)\in H$, or $\tau(a),\tau(a)+\tau(b)\notin H$.
But if $\tau(a),\tau(a)+\tau(b)\in H$, then $\tau(a)+\tau(b)\tau(s_0)\in H$, and similarly, if $\tau(a),\tau(a)+\tau(b)\notin H$, then $\tau(a)+\tau(b)\tau(s_0)\notin H$, which implies that $a+b$ gets the same color, 
so $\{a,a+b,a+bs_0\}$ cannot be an AM copy.\qed
\end{counterexample}

\bigskip

Straightforward generalisations of our arguments from Section \ref{4} would also imply lower bounds for the chromatic number of other spaces.
For example, if $C$ is a lattice-like bouquet of spheres, then every proper $k$-colouring of $\mathbb R^d$ contains a bold congruent copy of $C$. This implies that if one can find a lattice-like unit-distance graph with an origin $v_0$ that does not admit a proper $k$-colouring with bichromatic origin $v_0$, then $\chi(\mathbb R^d)\ge k+1$.
Possibly one can even strengthen this further; in $\mathbb R^d$ it could be even true that there is a \emph{$d$-bold} congruent copy of any bouquet $C$, meaning that there are $d$ colours that appear on each sphere of $C$.
This would imply $\chi(\mathbb R^d)\ge k+d-1$ if we could find a lattice-like unit-distance graph with an origin $v_0$ that does not admit a proper $k$-colouring with $d$-chromatic origin $v_0$.

\bigskip

In our AP-free colourings that avoid AM similar copies of certain sets, we often use many colours. 
It would be interesting to know if constructions with fewer colours exist, particularly regarding applications to the Hadwiger-Nelson problem.
In Lemma \ref{1notextreme} (and also in the colouring used for proving the `if' direction of Theorem \ref{introZ}) the number of colours we use is not even uniformly bounded. Are there examples with uniformly bounded number of colours?

\bigskip

One of our main questions is about characterising those pairs $(S,s_0)$ for which in every colouring of $\mathbb{R}^d$ we either find an AM similar copy of $(S,s_0)$ or an \emph{infinite} monochromatic AP. However, regarding applications to the Hadwiger-Nelson problem the following, weaker version would also be interesting to consider: Determine those $(S,s_0)$ with $S\subseteq \mathbb{R}^d$ and $s_0\in S$ for which there is a $D=D(k,S)$ such that the following is true. For every $n$ in every $k$-colouring of $\mathbb{R}^D$ there is an AM similar copy of $(S,s_0)$ or an $n$-term monochromatic AP with difference $t\in \mathbb{N}$ bounded by $D$. 
Note that there are pairs for which the property above does not hold when colouring $\mathbb{Z}$. For example let $S=\{-2,-1,0,1,2\}$, $s_0=0$, and colour $i\in \Z$ red if $\lfloor i/D\rfloor \equiv 0 \bmod 2$ and blue if $\lfloor i/D\rfloor \equiv 1 \bmod 2$.

\subsection*{Acknowledgements.} We thank Konrad Swanepoel, Peter Allen and an anonymous referee for helpful suggestions on improving the presentation of the paper, and the participants of the Polymath16 project for discussions and consent to publish these related results that were obtained `offline' and separately from the main project.

\bibliographystyle{amsplain}
\bibliography{biblio}

\appendix

\section{Proof of Gallai's theorem}\label{sec:gallai}

For completeness, we prove the stronger version of Gallai's theorem, Theorem \ref{gallaiZ}.
We use the Hales-Jewett theorem, following the proof from \cite{gallaiR}.

A \emph{combinatorial line} in $[n]^N$ is a a collection of $n$ points, $p_1, \ldots, p_n$, such that for some fix $I\subset [N]$ for every $i\in I$ the coordinate $(p_j)_i$ is the same for every $j$, while for $i\notin I$ the coordinate $(p_j)_i=j$ for every $j$.

\begin{theorem}[Hales-Jewett \cite{HalesJewett}]
    For every $n$ and $k$ there is an $N$ such that every $k$-colouring of $[n]^N$ contains a monochromatic combinatorial line.
\end{theorem}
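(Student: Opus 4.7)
I would prove the theorem by induction on the alphabet size $n$. The base case $n=1$ is immediate, since the single point of $[1]^N$ trivially forms a combinatorial line. For the inductive step, assume the theorem for the alphabet $[n-1]$ and any number of colours, and let $k$ be given. Denote by $H(m, k')$ the bound given by the inductive hypothesis (the smallest $N$ that works for alphabet $[m]$ and $k'$ colours). Choose integers $M_1, \ldots, M_k$ by backward recursion, setting $M_k := H(n-1, k)$ and $M_i := H(n-1, k^{n^{M_{i+1}+\cdots+M_k}})$; set $N := M_1+\cdots+M_k$ and partition the coordinates of $[n]^N$ into consecutive blocks $B_1, \ldots, B_k$ of sizes $M_1, \ldots, M_k$.

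Given a $k$-colouring $\chi$ of $[n]^N$, the key construction produces a parameterised family of points $p(t_1, \ldots, t_k) \in [n]^N$ with $(t_1, \ldots, t_k) \in [n]^k$, where on each block $B_i$ the coordinates of $p(\vec t)$ are either fixed constants or equal to $t_i$. This family is built block-by-block. Suppose $t_1, \ldots, t_{i-1}$ have already been specified. To build the $i$-th degree of freedom, for each $y \in [n-1]^{M_i}$ consider the ``slice-colouring'' $\chi_y : [n]^{M_{i+1}+\cdots+M_k} \to [k]$ obtained by fixing blocks $B_1, \ldots, B_{i-1}$ so that $t_1 = \cdots = t_{i-1} = n$, setting $B_i$ to $y$, and letting the later blocks vary. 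The assignment $y \mapsto \chi_y$ is an auxiliary colouring of $[n-1]^{M_i}$ using $k^{n^{M_{i+1}+\cdots+M_k}}$ colours, so the choice of $M_i$ and the inductive hypothesis provide an $[n-1]$-combinatorial line in $[n-1]^{M_i}$ on which $\chi_y$ is constant. Extending the variable coordinates of this line also to the value $n$ defines the $t_i$-axis. By construction, replacing any $t_i \in [n-1]$ by another value in $[n-1]$ preserves $\chi(p(\vec t))$; hence the colour of $p(\vec t)$ depends only on the set $A(\vec t) := \{i : t_i = n\} \subseteq [k]$.

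To conclude, consider the length-$(k+1)$ chain of subsets $\emptyset \subsetneq \{1\} \subsetneq \{1,2\} \subsetneq \cdots \subsetneq [k]$. Since $\chi$ uses only $k$ colours, by pigeonhole there exist $0 \le i < j \le k$ such that the points whose pattern-sets are $\{1, \ldots, i\}$ and $\{1, \ldots, j\}$ receive the same colour. Fixing $t_1 = \cdots = t_i = n$, fixing $t_{j+1}, \ldots, t_k$ to an arbitrary value in $[n-1]$, and letting $t_{i+1} = \cdots = t_j = t$ vary over $[n]$ yields a combinatorial line on which $\chi$ is constant: when $t \in [n-1]$ the pattern is $\{1, \ldots, i\}$, and when $t = n$ it is $\{1, \ldots, j\}$, so both values receive the common colour. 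The main obstacle is the delicate bookkeeping: one must match the backward recursion defining the $M_i$ precisely to the ``colouring-of-colourings'' step, so that the inductive hypothesis is applied with the right number of auxiliary colours at each stage, and this is what produces the tower-type bounds intrinsic to the argument.
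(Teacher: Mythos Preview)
The paper does not prove the Hales--Jewett theorem; it is quoted as a classical result and used as a black box in the appendix to derive Gallai's theorem. So there is no ``paper's proof'' to compare against. Your proposal follows Shelah's well-known argument, which is a correct overall strategy, but there is a genuine gap in the execution.

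At stage $i$ you anchor the earlier parameters at $t_1=\cdots=t_{i-1}=n$ when forming the auxiliary colouring $y\mapsto\chi_y$. The monochromatic $[n-1]$-line you obtain then only certifies that $\chi\bigl(p(n,\ldots,n,t_i,w)\bigr)$ is independent of $t_i\in[n-1]$ \emph{when the first $i-1$ parameters equal $n$}. From this it does \emph{not} follow that $\chi(p(\vec t))$ depends only on $A(\vec t)$. Concretely, on your line $L(t)=p(n^i,t^{j-i},1^{k-j})$ with $t\in[n-1]$, property $i+1$ lets you replace coordinate $i+1$ by $1$, but then property $i+2$ no longer applies (it needs the first $i+1$ parameters to be $n$, and coordinate $i+1$ is now $1$), so you cannot reduce $L(t)$ to $q_i$. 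The standard repair is to anchor the already-built parameters at a value of $[n-1]$ (say $1$) rather than at $n$; properties $1,\ldots,i-1$ then bootstrap property $i$ to hold whenever $t_1,\ldots,t_{i-1}\in[n-1]$, and using the chain $q_i=p(1^{k-i},n^i)$ of \emph{final} segments the pigeonhole-and-line step goes through exactly as you intended.
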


\begin{proof}[Proof of Theorem \ref{gallaiZ}]
Suppose that we want to find a monochromatic positive homothet of some finite set $S=\{s_1,\ldots,s_n\}$ from $\Z^d$ in a $k$-colouring of $\Z^d$.
Choose an $N$ that satisfies the conditions of the Hales-Jewett theorem for $n$ and $k$.
Choose an injective embedding of $[n]^N$ into $\Z^d$ given by $\Psi(x_1,\ldots,x_N)=\sum_{i=1}^N \lambda_is_{x_i}$, where the $\lambda_i$'s are to be specified later.
Then every combinatorial line is mapped into a positive homothet of $S$, with scaling $\sum_{i\in I} \lambda_i$ for some non-empty $I\subset [N]$.
Therefore, applying the Hales-Jewett theorem for the pullback of the $k$-colouring of our space gives a monochromatic homothet of $S$.

We still have to specify how we choose the numbers $\lambda_i$.
For $\Psi$ to be injective, we need that $\sum_{i=1}^N \lambda_i(s_{x_i}-s_{x_i'})\ne 0$ if $\underline x\ne \underline x'$.
These can be satisfied for some $1\le \lambda_i\le n^N$ by choosing them sequentially.
This means that the scaling of the obtained monochromatic homothet is at most $\sum_{i=1}^N \lambda_i\le Nn^N$.
\end{proof}

\begin{remark}
The proof above works in every abelian group of sufficiently large cardinality, thus $\Z^d$ in Theorem \ref{gallaiZ} can be replaced with $\R^d$ or any lattice $\mathcal L$.
\end{remark}

\end{document}